\documentclass[10pt,letterpaper,reqno]{amsart}
\usepackage{amsfonts, amsmath, amssymb, amscd, amsthm, graphicx,enumerate}
\usepackage{color}
\usepackage{hyperref}

\makeatletter
\def\blfootnote{\xdef\@thefnmark{}\@footnotetext}
\makeatother

\hypersetup{colorlinks=true,linkcolor=red,citecolor=blue,urlcolor=blue}

\newtheorem{thm}{Theorem}[section]
\newtheorem{mainthm}{Theorem}

\newcommand{\restatedmainthmname}{}
\newtheorem*{mainthmrestatedbase}{\restatedmainthmname}
\newenvironment{mainthmrestated}[2]
 {\renewcommand{\restatedmainthmname}{Theorem~\ref{#1} (#2)}%
  \begin{mainthmrestatedbase}}
 {\end{mainthmrestatedbase}}
\newcommand{\restatedcorname}{}
\newtheorem*{correstatedbase}{\restatedcorname}
\newenvironment{correstated}[2]
 {\renewcommand{\restatedcorname}{Corollary~\ref{#1} (#2)}%
  \begin{correstatedbase}}
 {\end{correstatedbase}}
\newtheorem{cor}[thm]{Corollary}
\newtheorem{lem}[thm]{Lemma}
\newtheorem{prop}[thm]{Proposition}

\theoremstyle{definition}
\newtheorem{defn}[thm]{Definition}
\theoremstyle{remark}
\newtheorem{rem}[thm]{Remark}
\newtheorem{ex}[thm]{Example}

\newtheorem{conv}[thm]{Convention}

\renewcommand{\phi}{\varphi}
\renewcommand{\epsilon}{\varepsilon}

\newcommand{\R}{\mathbb R}
\newcommand{\Z}{\mathbb Z}
\renewcommand{\Pr}{\mathbb P}
\newcommand{\Aut}{\operatorname{Aut}}
\newcommand{\Out}{\operatorname{Out}}
\newcommand{\Stab}{\operatorname{Stab}}

\newcommand{\rank}{\operatorname{rank}}
\newcommand{\supp}{\operatorname{supp}}
\newcommand{\Curr}{\operatorname{Curr}}
\newcommand{\PCurr}{\mathbb P\Curr}
\newcommand{\cv}{\mathrm{cv}}
\newcommand{\cvbar}{\overline{\mathrm{cv}}}
\newcommand{\dd}{\partial^2}

\newcommand{\BBT}{\operatorname{BBT}}
\newcommand{\Lip}{\operatorname{Lip}}
\newcommand{\diam}{\operatorname{diam}}
\newcommand{\Ov}{\operatorname{Ov}}
\newcommand{\Sym}{\operatorname{Sym}}

\newcommand{\vol}{\operatorname{vol}}
\newcommand{\Normal}[1]{\left\langle\!\left\langle #1\right\rangle\!\right\rangle}
\newcommand{\calK}{\mathcal K}
\newcommand{\calR}{\mathcal R}

\newcommand{\calQ}{\mathcal Q}
\newcommand{\ol}{\overline}

\newcommand{\rhoSigned}{\rho_{\mathrm{per}}^{\pm}}
\newcommand{\DeltaA}{\Delta_A}

\newcommand{\sh}{\operatorname{sh}}
\newcommand{\cdim}{\operatorname{cd}}
\newcommand{\Rel}{\operatorname{Rel}}
\newcommand{\CR}{\mathcal C}
\newcommand{\FR}{\mathcal F}

\begin{document}
\raggedbottom

\title[Isomorphism Rigidity for for Generic Finitely Presented Groups]{Small Cancellation Stability and Isomorphism Rigidity for Generic Finitely Presented Groups}

\author[I. Kapovich]{Ilya Kapovich}

\address{Department of Mathematics and Statistics, Hunter College of CUNY\newline
  \indent 695 Park Ave, New York, NY 10065
  \newline \indent  {\url{http://math.hunter.cuny.edu/ilyakapo/}}}
  \email{ik535@hunter.cuny.edu}
\keywords{free group, random cyclically reduced words, small cancellation, $\lambda$-stability, automorphisms, geodesic currents, Whitehead algorithm, Nielsen equivalence, isomorphism rigidity}

\subjclass[2020]{Primary 20F65, Secondary 20E05, 20F05, 20F10, 20F67, 37D99, 60B15}

\date{}
\hypersetup{
  pdftitle={Small Cancellation Stability and Isomorphism Rigidity for Generic Finitely Presented Groups},
  pdfauthor={Ilya Kapovich},
  pdfsubject={The Kapovich--Schupp Stability Conjecture, random cyclically reduced relators, automorphic small cancellation, and isomorphism rigidity},
  pdfkeywords={free groups, random cyclically reduced words, lambda-stability, small cancellation, Whitehead algorithm, isomorphism rigidity}
}

\begin{abstract}
Let $F_m=F(a_1,\dots,a_m)$ with $m\ge 2$, and fix $q\ge 1$.  For every fixed $0<\lambda<1$, we prove that a $q$-tuple $\mathbf W_n$ of independent uniformly random cyclically reduced words of length $n$ is \emph{$\lambda$-stable} with probability converging to $1$ exponentially fast. Namely, for every $\Phi\in \Aut(F_m)$, the tuple $\Phi(\mathbf W_n)$, after cyclic reduction and symmetrization, satisfies the $C'(\lambda)$ small cancellation condition.

Combining generic $\lambda$-stability with Greendlinger normal-closure rigidity and with previous results of Kapovich-Schupp-Shpilrain on generic Nielsen uniqueness and generic Whitehead rigidity we establish, for any fixed $m\ge 2, q\ge 1$, isomorphism rigidity for generic $m$-generator $q$-relator groups. Thus we show that two such generic groups $\langle a_1,\dots, a_m| r_1,\dots, r_q\rangle$ and $\langle a_1,\dots, a_m| s_1,\dots, s_q\rangle$ are isomorphic if and only if, after possibly permuting and inverting the generators $a_1,\dots, a_m$, the relator tuples $(r_1,\dots, r_q)$ and $(s_1,\dots, s_q)$ are the same, up to reordering, cyclic permutations and inverting the relators. Among the applications, we obtain a quadratic-time algorithm that generically solves the isomorphism problem for $m$-generator $q$-relator groups, and  show that the number of isomorphism types represented by $m$-generator $q$-relator presentations with cyclically reduced relators of length $n$ is asymptotic to
\[
 \frac{(2m-1)^{qn}}{2^{m+q}m!\,q!\,n^q}.
\]
 
The proof of generic $\lambda$-stability relies on the use of geodesic currents and on our deterministic sufficient criterion for a $q$-tuple $\mathbb W$ in $F_m$ to be $\lambda$-stable in terms of the components of $\mathbb W$ being sufficiently projectively close to filling currents. 
\end{abstract}

\maketitle
\begingroup
\setlength{\parskip}{0pt}
\setcounter{tocdepth}{1}
\tableofcontents
\endgroup

\section{Introduction}

\subsection{Uniform automorphic small cancellation}

Random and generic phenomena play a central role in geometric and combinatorial group theory.  Gromov's density and few-relator models initiated the study of groups with long random relations~\cite{Gro87,Gro93}. Several different models of genericity for finitely presented groups have been studied, see, for example, \cite{Ol92,Cha95,AO96,Oll05}, with word-hyperbolicity being a persistent common feature of these constructions. Random-group methods have also produced striking applications beyond hyperbolicity.  Gromov used a random-group construction based on expander graphs to produce finitely generated groups which do not admit a uniform embedding into a Hilbert space~\cite{Gro03}.  Random groups also provide important sources of Kazhdan groups: in particular, for $1/3<d<1/2$, a random group in Gromov's density model at density $d$ has property~(T) with probability tending to one~\cite{Zuk03,KK13}.  Random groups in the few relator model exhibit other interesting properties, such as bounded rank local freeness and Nielsen uniqueness, ~\cite{AO96,Arz97,KS05}, with applications to isomorphism rigidity for random one-relator groups~\cite{KSS06,KS09}.

Throughout the paper,
\[
 F_m=F(A)=F(a_1,\dots,a_m),\qquad m\ge 2,\quad  A=\{a_1,\dots, a_m\}. 
\]
Whenever $X_n$ is a sequence of nonempty finite sampling spaces, we say that subsets $Y_n\subseteq X_n$ are \emph{exponentially generic} if there are constants $C,c>0$ such that
\[
 1-\frac{|Y_n|}{|X_n|}\le Ce^{-cn}
\]
for every $n\ge 1$.  Likewise, a sequence of events $E_n$ occurs \emph{with exponentially high probability} if $\Pr(E_n)\ge 1-Ce^{-cn}$ for some $C,c>0$ and every $n\ge 1$.  

The \emph{isomorphism-rigidity} theorem of Kapovich-Schupp-Shpilrain for random one-relator groups in~\cite{KSS06} says, roughly, that one-relator groups $G_r=\langle a_1,\dots, a_m|r\rangle$ and $G_s=\langle a_1,\dots, a_m|s\rangle$, with long "random" cyclically reduced defining relator $r,s\in F_m$, are isomorphic as groups if and only if there exists a ``relabeling" automorphism $\Theta$ of $F_m$ permuting and possibly inverting the generators $A$, such that $r$ is a cyclic permutation of $\Theta(s^{\pm 1})$. The proof of this result in \cite{KSS06} combined three main ingredients.  First, the Nielsen-uniqueness theorem of Kapovich--Schupp~\cite{KS05} says that, for fixed $m\ge 2$ and $q\ge 1$, there is an exponentially generic class of $m$-generator $q$-relator presentations whose groups have a single Nielsen class of $m$-tuples generating nonfree subgroups.  Second, Kapovich--Schupp--Shpilrain~\cite{KSS06} gave a generic-case analysis of Whitehead's algorithm in $F_m$: generic cyclically reduced words satisfy a strong Whitehead-minimality condition, and automorphic equivalence between such words reduces to relabeling and cyclic permutation.  Third, Magnus's classical normal-closure theorem~\cite{Mag30} says that elements $r,s\in F_m$ have the same normal closure if and only if $r$ is conjugate in $F_m$ to $s^{\pm1}$.  In the one-relator case, these three inputs turn an abstract group isomorphism into the explicit relation between relators needed for isomorphism rigidity.

For $q\ge 2$ defining relators, the analogous conclusion of Magnus's theorem is false for relator sets in general, and this fact was the main obstacle to extending the results of~\cite{KSS06} to random multi-relator quotients of free groups.  Kapovich--Schupp overcame the corresponding difficulty in~\cite{KS09} for random $q$-relator quotients of the modular group $M=\Z_2\ast\Z_3$.  There the finite orders of the standard generators and the restricted Nielsen theory for generating pairs provide enough additional control to combine small cancellation with a Greendlinger-type normal-closure argument.  The analogous generic multi-relator rigidity problem for quotients of $F_m$ remained open.  In the present paper, we overcome this difficulty by proving $\lambda$-stability for random $q$-tuples of relators in $F_m$, for every $q\ge 1$, and combining it with Greendlinger's theorem~\cite{Gre61} for symmetrized small-cancellation sets having the same normal closure.  This supplies the required multi-relator substitute for Magnus's theorem; the other inputs are the Nielsen-uniqueness results of~\cite{KS05} and the generic Whitehead-minimality and stabilizer results of~\cite{KSS06}.

The missing ingredient is therefore a normal-closure uniqueness principle robust enough to survive arbitrary automorphisms of $F_m$.  This situation leads naturally to the stability problem studied here: whether small cancellation holds uniformly throughout the automorphic orbit of a random relator tuple.  The order of quantifiers is crucially important. For any fixed $\Phi\in \Aut(F_m)$ standard quasi-isometry considerations imply that, for any fixed $0<\lambda<1$, there exists $0<\lambda'<\lambda$ such that a sufficiently long cyclically reduced $C'(\lambda')$-word $w\in F_m$ has the property that the cyclic reduction of $\Phi(w)$ satisfies $C'(\lambda)$, after symmetrization. Since for any $0<\lambda'<1$, random cyclically reduced $w_n\in F_m$ satisfy $C'(\lambda')$ for all large enough $n$,  it follows that the cyclic reduction of $\Phi(w_n)$ satisfies $C'(\lambda)$, after symmetrization. However, requiring that a random cyclically reduced word satisfies this conclusion \emph{for all} $\Phi\in \Aut(F_m)$ makes the problem much more difficult. 

For $g\ne1$, let $\|g\|_A$ be the cyclically reduced length of its conjugacy class.  Recall informally that a nontrivial element is \emph{root-free} if it is not a proper power, and a tuple is root-free if each of its entries is root-free; a tuple is \emph{irredundant} if no two distinct entries are conjugate to one another, even after inverting one of them.

In this paper we introduce and study the following new key notion.  For $0<\lambda<1$, we call a tuple of cyclically reduced words in $F_m$ \emph{$\lambda$-stable} if, after applying any automorphism of $F_m$ and cyclically reducing, its symmetrization satisfies $C'(\lambda)$.  See Subsection~\ref{subsec:free-small-prelim} for the precise definitions.

\begin{rem}[Small cancellation versus stability]\label{rem:small-cancellation-not-stable}
Ordinary $C'(\lambda)$ small cancellation is far weaker than $\lambda$-stability.  If $m\ge3$, $0<\lambda\le1/6$, and $W_n$ is a uniform positive word of length $n$ in $F(a_2,\dots,a_m)$, then $a_1W_n$ satisfies $C'(\lambda)$ with exponentially high probability but is primitive, hence not $\lambda$-stable.  Indeed, the automorphic orbit of every primitive element contains $a_1a_2^N$, whose symmetrization fails $C'(\lambda)$ for all sufficiently large $N$.  The phenomenon already occurs in rank two: the positive word
\[
 a^5ba^3b^2ababa^2b^4ab^3
\]
has length $25$, is root-free, and satisfies $C'(1/6)$, but its image under $a\mapsto ab$, $b\mapsto b$ has two distinct cyclic conjugates with common prefix $(ab)^4$ of length $8>38/6$.
\end{rem}

For a root-free irredundant tuple $\mathbf g=(g_1,\dots,g_q)$, let $\Delta_A(\mathbf g)$ denote its \emph{supremal automorphic piece ratio}.  For each $\phi\in\Out(F_m)$, choose an automorphism representing $\phi$, cyclically reduce the images of the components of $\mathbf g$, and symmetrize the resulting tuple by taking all cyclic shifts of these words and their inverses.  For every ordered pair of distinct words $r,r'$ in this symmetrization, let $v$ be their maximal common initial segment and record the ratio
\[
 \frac{|v|_A}{|r|_A}.
\]
Then $\Delta_A(\mathbf g)$ is the supremum of these ratios over all such ordered pairs and all $\phi\in\Out(F_m)$, with value $0$ when no pieces occur; if $\mathbf g$ is not root-free and irredundant, we put $\Delta_A(\mathbf g)=\infty$.  Thus $\Delta_A(\mathbf g)$ measures the worst relative piece overlap occurring anywhere in the automorphic orbit of $\mathbf g$, and $\Delta_A(\mathbf g)<\lambda$ implies that $\mathbf g$ is $\lambda$-stable.  See Definition~\ref{def:admissible} for the formal definition.

For $n\ge 1$, let $\CR_n$ and $\FR_n$ be the sets of cyclically and freely reduced words of length $n$ over $A^{\pm1}$, respectively.  Unless stated otherwise, random choices are uniform.  For a positive sequence $a_n$, we write $Y_n=O_{\Pr}(a_n)$ if $Y_n/a_n$ is bounded in probability; explicitly, for every $\delta>0$ there is $K>0$ such that $\Pr(|Y_n|>Ka_n)<\delta$ for all sufficiently large $n$.

\begin{mainthm}[Uniform $\lambda$-stability]\label{mainthm:stability}
Fix $m\ge 2$, $q\ge 1$, and $0<\lambda<1$.  Let
\[
 \mathbf C_n=(C_{1,n},\dots,C_{q,n})
\]
be an independent uniformly random $q$-tuple in $\CR_n^q$.  Then the following hold.
\begin{enumerate}[(1)]
\item There are constants $C,c>0$ such that, for every $n\ge 1$, with probability at least $1-Ce^{-cn}$ the tuple $\mathbf C_n$ is root-free, irredundant, and $\lambda$-stable.

\item More precisely,
\[
 \Delta_A(\mathbf C_n)=O_{\Pr}\left(\frac{\log n}{n}\right),
\]
and for every fixed $\epsilon>0$ there are $C_\epsilon,c_\epsilon>0$ such that
\[
 \Pr\bigl(\Delta_A(\mathbf C_n)>\epsilon\bigr)
 \le C_\epsilon e^{-c_\epsilon n}
\]
for every $n\ge 1$.

\item If $W_{1,n},\dots,W_{q,n}$ are independent uniformly random words in $\FR_n$ and $\widehat W_{i,n}$ is the cyclically reduced form of $W_{i,n}$, then the conclusions of parts~\textup{(1)}--\textup{(2)} hold for
\[
 \widehat{\mathbf W}_n=(\widehat W_{1,n},\dots,\widehat W_{q,n});
\]
in particular,
\[
 \Delta_A(\widehat{\mathbf W}_n)
 =O_{\Pr}\left(\frac{\log n}{n}\right),
\]
and the corresponding fixed-scale tails are exponential.
\end{enumerate}
\end{mainthm}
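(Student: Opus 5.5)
Our plan follows the route indicated in the abstract: reduce uniform stability to a deterministic criterion phrased in terms of geodesic currents, then verify that criterion for random words by a large-deviation argument. Let $\nu_A$ be the uniform current on $F_m$, the limit of the normalized counting currents $\eta_{w}/|w|_A$ of long random cyclic words. The deterministic step is the following: there exist a neighborhood $U$ of $[\nu_A]$ in $\PCurr(F_m)$ and a constant $K$ such that, whenever each $C_i$ is root-free, the tuple is irredundant, and each $[\eta_{C_i}]\in U$, we get
\[
\Delta_A(\mathbf C)\le \frac{K\bigl(\log|C|+\sigma(\mathbf C)\bigr)}{\min_i |C_i|}.
\]
Here $\sigma(\mathbf C)$ denotes the longest piece of the symmetrized tuple, measured with respect to the original basis $A$.

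To prove this criterion we use two facts.

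First, because $\nu_A$ is filling, the length function $\phi\mapsto \langle T_A\phi,\nu_A\rangle$ on $\Out(F_m)$ is uniformly comparable to the Lipschitz constant of $\phi$. More precisely, there is $c>0$ such that $\|\phi(C)\|_A\ge c\,\Lip(\phi)\,|C|_A$ for all $\phi$ and all $C$ with $[\eta_C]\in U$. This follows from continuity of the intersection form together with compactness of $\PCurr$ and of $\overline{\mathrm{cv}}$.

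Second, a piece $v$ of length $\ell$ in the symmetrization of $\phi(\mathbf C)$ pulls back, by bounded cancellation applied to $\phi^{-1}$, to a common subword of two cyclic conjugates of the original $C_i^{\pm1}$. We then need that this pulled-back subword is long relative to $\ell$ whenever $\ell$ is a positive proportion of $\|\phi(C_i)\|$. The approach is to view $\phi(C_i)$ as the $A$-length of $C_i$ measured in the tree $T_A\phi$. Because $\eta_{C_i}$ is close to $\nu_A$, every long subword of $C_i$ has translation length in $T_A\phi$ close to its proportional share. A piece occupying an $\epsilon$-fraction of $\|\phi(C_i)\|$ therefore comes from a subword of $C_i$ occupying roughly an $\epsilon$-fraction of $|C_i|$.

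We expect the main obstacle to be making this equidistribution uniform over all $\phi$, that is, at the scale of subwords rather than whole words. Closeness of $\eta_C$ to $\nu_A$ controls only bounded-length subword frequencies, so we would need a "uniformly equidistributed" hypothesis: every subword of length at least $\epsilon|C|$ has current in $U$. We would then argue via compactness of the projectivized space of trees, using that $\nu_A$ has full support and the continuity of $\langle\cdot,\cdot\rangle$ on $\overline{\mathrm{cv}}\times\Curr$.

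The probabilistic step checks these hypotheses for $\mathbf C_n$.
\begin{enumerate}[(a)]
\item Root-freeness and irredundancy fail with probability at most $(2m-1)^{-n/2}$ up to polynomial factors, by counting.
\item For each fixed word of length $L$, a Chernoff or Markov-chain large-deviation bound for the non-backtracking walk shows that its frequency is within $\delta$ of the $\nu_A$-value with probability $1-e^{-c n}$. This holds uniformly over all subwords of length at least $\epsilon n$, of which there are only $O(n^2)$, and a union bound gives the uniform equidistribution with exponentially high probability.
\item The longest common subword among cyclic conjugates of the $C_i^{\pm1}$ is $O_{\Pr}(\log n)$, and exceeds $\epsilon n$ only with exponentially small probability. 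This is a standard first-moment count over pairs of positions.
\end{enumerate}
Plugging (a)--(c) into the criterion yields parts (1) and (2).

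For part (3) we couple. A uniform freely reduced word $W$ of length $n$ has cyclic reduction of length $n-2T$, where $T$ has exponential tails. Conditionally on its length, $\widehat W$ is close in total variation to uniform on $\CR_{n-2T}$; exact uniformity fails only through a boundedly distorted first-and-last-letter constraint. Hence the estimates of (b) and (c) transfer, with constants adjusted, and the same conclusions hold for $\widehat{\mathbf W}_n$.
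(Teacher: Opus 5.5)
Your denominator step is sound and is exactly what the paper uses. Near the filling current $\nu_A$ one has $\|\phi(C_i)\|_A\ge c\,\Lambda_A(\phi)\,|C_i|_A$ uniformly over $\phi\in\Out(F_m)$ (Lemma~\ref{lem:uniform-filling-neighborhood}). The gap is in the numerator.

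Pulling a piece $v$ of $\calR_A(\phi(\mathbf C))$ back by bounded cancellation for $\Phi^{-1}$ only shows the following: $\Phi^{-1}(v)$, with $\mathrm{BCC}(\Phi^{-1})$ letters trimmed from each end, is common to two signed periodic words of $\mathbf C$. With $L(\Phi)=\max_{a\in A}|\Phi(a)|_A$, the best this yields is $|v|\le L(\Phi)\bigl(\rhoSigned(\mathbf C)+2\,\mathrm{BCC}(\Phi^{-1})\bigr)$. Since $\|\phi(C_i)\|_A\le L(\Phi)|C_i|_A$, this bound divided by the true denominator is at least $2\,\mathrm{BCC}(\Phi^{-1})/|C_i|_A$. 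For fixed $n$, that quantity is unbounded as $\phi$ ranges over $\Out(F_m)$. Already for $\Phi^{-1}\colon a\mapsto ab^k$, $b\mapsto b$, the product $\Phi^{-1}(a)\Phi^{-1}(b^{-k})$ cancels $k$ letters. So for most $\phi$ the resulting estimate is vacuous.

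Your remedy does not touch this. Equidistribution of all subwords of length $\ge\epsilon|C_i|_A$ (which your item (b) would indeed supply) only controls how long the source segment lying over $v$ on one axis is. It says nothing about whether the two source lines actually share that segment, and it does not remove the $\Phi^{-1}$-dependent trimming. So the uniform criterion you state is not established, and the ``main obstacle'' you identify is not the real one.

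The missing idea is to argue forward, so that every error term scales by the same factor $\Lambda_A(\phi)$ as the denominator. Take an optimal equivariant map $f\colon T_A\to T_A\phi$ with $\Lip(f)=\Lambda_A(\phi)$. By Proposition~\ref{prop:BBT-bound}, $\BBT(f)\le m\Lambda_A(\phi)$. Bounded backtracking gives $(\xi'\mid\eta')_{f(x)}\le\Lambda_A(\phi)(\xi\mid\eta)_x+\BBT(f)$ for image boundary points. Hence, for distinct axis translates $\ell_1,\ell_2\subseteq T_A$ with image lines $\ell_1',\ell_2'$, one gets $\diam(\ell_1'\cap\ell_2')\le2\Lambda_A(\phi)\diam(\ell_1\cap\ell_2)+4m\Lambda_A(\phi)$ (Lemma~\ref{lem:line-overlap}).

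Source axis overlaps are exactly signed periodic overlaps (Lemma~\ref{lem:signed-axis-overlap}). These can exceed the longest piece when an overlap runs past the end of a shorter relator (Example~\ref{ex:strict-piece-rho}), so your $\sigma(\mathbf C)$ should be replaced by $\rhoSigned(\mathbf C)$. Every piece of $\calR_A(\phi(\mathbf C))$ then has length at most $2\Lambda_A(\phi)(\rhoSigned(\mathbf C)+2m)$. The denominator bound gives $\DeltaA(\mathbf C)\le 2(\rhoSigned(\mathbf C)+2m)/(c\min_i|C_i|_A)$ uniformly in $\phi$. With this, subword-scale equidistribution is unnecessary: concentration of finitely many cylinder coordinates plus the overlap tail suffice.

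Two smaller points. In (c), self-overlaps within one relator (overlapping windows, opposite orientations) are not products of independent letter events. A bare first-moment count must be supplemented by a conditioning and matching argument, as in Proposition~\ref{prop:signed-overlap-tail}. In part (3), the cyclic reduction is in fact exactly uniform on $\CR_\ell$ given its length $\ell$, so no total-variation approximation is needed.
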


\begin{cor}[Freely reduced sphere and ball models]\label{cor:sphere-ball-stability}
Fix $m\ge 2$, $q\ge 1$, and $0<\lambda<1$.
\begin{enumerate}[(1)]
\item Let
\[
 \mathbf W_n=(W_{1,n},\dots,W_{q,n})
\]
be an independent uniformly random $q$-tuple in $\FR_n^q$.  Then there are constants $C,c>0$ such that, for every $n\ge 1$, with probability at least $1-Ce^{-cn}$ the tuple $\mathbf W_n$ is root-free, irredundant, and $\lambda$-stable.  Moreover,
\[
 \Delta_A(\mathbf W_n)=O_{\Pr}\left(\frac{\log n}{n}\right),
\]
and for every fixed $\epsilon>0$ there are $C_\epsilon,c_\epsilon>0$ such that
\[
 \Pr\bigl(\Delta_A(\mathbf W_n)>\epsilon\bigr)
 \le C_\epsilon e^{-c_\epsilon n}
\]
for every $n\ge 1$.

\item Let
\[
 B_A(n)=\{g\in F_m:|g|_A\le n\},
\]
and let $\mathbf U_n=(U_{1,n},\dots,U_{q,n})$ be uniformly distributed on $B_A(n)^q$ (equivalently, its components are independent uniformly random freely reduced words of length at most $n$).  Then the conclusions of part~\textup{(1)} hold with $\mathbf U_n$ in place of $\mathbf W_n$.
\end{enumerate}
\end{cor}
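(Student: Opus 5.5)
The plan is to deduce the corollary from Theorem~\ref{mainthm:stability}(3). The key observation is that $\Delta_A$ only depends on the conjugacy classes of the entries. Indeed, in Definition~\ref{def:admissible} every automorphism image is cyclically reduced before symmetrizing. Root-freeness and irredundancy are also conjugacy invariants. Hence
\[
 \Delta_A(\mathbf W_n)=\Delta_A(\widehat{\mathbf W}_n),
\]
where $\widehat{\mathbf W}_n$ is the tuple of cyclic reductions of the components of $\mathbf W_n$. The same identity holds for the ball model with $\mathbf U_n$ and $\widehat{\mathbf U}_n$. Likewise, the tuple $\mathbf W_n$ is $\lambda$-stable exactly when $\widehat{\mathbf W}_n$ is, since the $\lambda$-stability condition itself cyclically reduces after applying an automorphism.

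For part~(1), Theorem~\ref{mainthm:stability}(3) applies verbatim to $\widehat{\mathbf W}_n$. It gives the following, all of which transfer to $\mathbf W_n$ by the identity above:
\begin{itemize}
\item root-freeness, irredundancy and $\lambda$-stability with probability at least $1-Ce^{-cn}$;
\item the bound $O_{\Pr}(\log n/n)$ for $\Delta_A(\widehat{\mathbf W}_n)$;
\item the exponential tails $\Pr(\Delta_A>\epsilon)\le C_\epsilon e^{-c_\epsilon n}$.
\end{itemize}

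For part~(2), I condition on the length vector $(|U_{1,n}|,\dots,|U_{q,n}|)=(\ell_1,\dots,\ell_q)$. Given this vector, the components are independent and uniform on $\FR_{\ell_i}$. The length distribution of a uniform element of $B_A(n)$ is concentrated near $n$. Indeed,
\[
 \Pr(|U_{i,n}|\le n-k)\le \frac{|B_A(n-k)|}{|B_A(n)|}\le C(2m-1)^{-k}.
\]
Taking $k=\lfloor \delta n\rfloor$ for a small fixed $\delta>0$, all lengths lie in $[(1-\delta)n,n]$ except on an event of exponentially small probability.

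The main obstacle is that Theorem~\ref{mainthm:stability}(3) is stated for a tuple of equal lengths $n$, while on this good event the lengths $\ell_i$ may differ. I would handle this in one of two ways.

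\begin{itemize}
\item The first option is to note that the proof of Theorem~\ref{mainthm:stability} (via closeness to filling currents and the deterministic criterion) is insensitive to lengths varying within a factor $1\pm\delta$. Each component is then separately projectively close to the uniform current, and the deterministic criterion applies with constants uniform in the length ratios.

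\item The second option applies if only the equal-length statement is to be used as a black box. For $q=1$ the statement applies directly for each fixed $\ell$. For general $q$, I would reduce to the deterministic current criterion stated later in the paper, which is formulated componentwise together with irredundancy. Irredundancy for independent components of lengths $\ell_i,\ell_j\ge(1-\delta)n$ fails with probability at most $(2m-1)^{-cn}$ by a direct count.
\end{itemize}

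With either option, the conditional bounds are uniform over $\ell_i\in[(1-\delta)n,n]$. The failure probabilities $Ce^{-c\ell_i}\le Ce^{-c(1-\delta)n}$ and the $O_{\Pr}(\log\ell/\ell)=O_{\Pr}(\log n/n)$ bounds are uniform over this range. Averaging over the length distribution then gives the claims for $\mathbf U_n$.
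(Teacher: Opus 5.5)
Your part~(1) is the paper's argument: $\Delta_A$, root-freeness, irredundancy and $\lambda$-stability depend only on the conjugacy classes of the components, so Theorem~\ref{mainthm:stability}(3) transfers verbatim.

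For part~(2), your first option is also essentially the paper's route. The paper is slightly more direct than conditioning on the freely reduced lengths. It shows that, outside an exponentially small event, $|U_{i,n}|\ge 3n/4$, and by \eqref{eq:cyclic-cancellation-tail} the cyclic reductions $C_i$ have lengths $\ell_i\in[n/2,n]$ and are conditionally independent and uniform in $\CR_{\ell_i}$. It then combines three ingredients:
\begin{enumerate}
\item the length-uniform form of Lemma~\ref{lem:cr-current-concentration}, together with the componentwise denominator bound of Proposition~\ref{prop:finite-frequency-estimate};
\item the variable-length overlap tail of Corollary~\ref{cor:signed-overlap-variable};
\item a rerun of the proof of Theorem~\ref{thm:abstract-master} with $\ell_*\ge n/2$ in \eqref{eq:finite-frequency-Delta}.
\end{enumerate}
The ingredient you should name explicitly in option~1 is that variable-length overlap tail. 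It is what controls pieces shared by \emph{different} components of unequal lengths, and it is what produces the $\log n/n$ rate.

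Your second option does not work as stated. Theorem~\ref{thm:finite-frequency-stability} is not a componentwise criterion plus irredundancy. Among its hypotheses is that the symmetrization of the whole tuple satisfies $C'(\lambda')$, and this includes cross-component pieces. Neither the single-component ($q=1$) statements applied at each fixed length nor irredundancy bound such pieces. You would still need a cross-component overlap estimate for independent uniform cyclic words of unequal lengths, which is exactly the content of Corollary~\ref{cor:signed-overlap-variable}. Moreover, to obtain $\Delta_A=O_{\Pr}(\log n/n)$ rather than just $\Delta_A<\lambda$, you need that estimate at logarithmic scale together with the quantitative bound \eqref{eq:finite-frequency-Delta}. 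So option~2, once repaired, collapses into option~1.
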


The deterministic core underlying these probabilistic conclusions is a finite-frequency criterion.  For a nontrivial freely reduced word $v$ and a nontrivial cyclically reduced word $W$, let $(v,W)_A$ be the number of nonsymmetrized cyclic occurrences of $v$ in $W$, and put
\[
 \langle v,W\rangle_A=(v,W)_A+(v^{-1},W)_A=(v^{\pm1},W)_A.
\]
Let $T_A$ be the unit-edge Cayley tree of $F_m$ with respect to $A$, and let $\partial F_m$ be its Gromov boundary.  A \emph{geodesic current} on $F_m$ is a positive locally finite Borel measure on $\partial^2F_m=(\partial F_m\times\partial F_m)\setminus\{(\xi,\xi)\}$ that is invariant under the diagonal $F_m$-action and the flip $(\xi,\zeta)\mapsto(\zeta,\xi)$.  For such a current $\nu$ and a nontrivial freely reduced word $v$, the cylinder coordinate $\langle v,\nu\rangle_A$ is the $\nu$-mass of the set of oriented geodesics in $T_A$ containing the oriented segment $[1,v]$.  The counting current $\eta_W$ associated to a nontrivial cyclically reduced word $W$ is characterized by $\langle v,\eta_W\rangle_A=\langle v,W\rangle_A$.  The standard tree--current intersection form is denoted $\langle T,\nu\rangle$; the current $\nu$ is \emph{filling} if this quantity is positive for every nontrivial tree $T\in\cvbar_m$, the length-function closure of Outer space.  These notions are recalled in detail in Subsections~\ref{subsec:outer-prelim}--\ref{subsec:currents-prelim}.

\begin{thm}[Finite-frequency stability criterion]\label{thm:finite-frequency-stability}
Fix $q\ge 1$ and let $\nu_1,\dots,\nu_q$ be filling geodesic currents on $F_m$ satisfying
\[
 \langle T_A,\nu_i\rangle=1
 \qquad(1\le i\le q),
\]
and let $0<\lambda<1$.  Then there exist constants $0<\epsilon<1$ and $0<\lambda'<\lambda$, and integers $M,n_0\ge 1$, such that the following holds.  Let
\[
 \mathbf W=(W_1,\dots,W_q)
\]
be a root-free irredundant tuple of nontrivial cyclically reduced words, not necessarily of equal lengths, and put
\[
 \ell_i=\|W_i\|_A=|W_i|_A
 \qquad(1\le i\le q).
\]
Suppose that $\ell_i\ge n_0$ for every $i$, that the symmetrization of $\mathbf W$ satisfies $C'(\lambda')$, and that, for every $1\le i\le q$ and every nontrivial freely reduced word $v$ with $|v|_A\le M$,
\[
 \left|
 \frac{\langle v,W_i\rangle_A}{\ell_i}
 -\langle v,\nu_i\rangle_A
 \right|<\epsilon.
\]
Then
\[
 \Delta_A(\mathbf W)<\lambda.
\]
In particular, $\mathbf W$ is $\lambda$-stable.
\end{thm}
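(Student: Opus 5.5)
We argue by contradiction and compactness in the space of projectivized currents. Suppose the statement fails for all choices. Then there are sequences $\epsilon_k\to0$, $\lambda'_k\to0$, $M_k\to\infty$, $n_{0,k}\to\infty$, and tuples $\mathbf W^{(k)}=(W^{(k)}_1,\dots,W^{(k)}_q)$ satisfying the hypotheses with these constants, but with $\Delta_A(\mathbf W^{(k)})\ge\lambda$. So for each $k$ there are $\phi_k\in\Out(F_m)$ (represented by $\Phi_k$), indices $i_k,j_k$, and two distinct elements of the symmetrization of the cyclically reduced forms of $\Phi_k(\mathbf W^{(k)})$ sharing a piece $v_k$ with $|v_k|_A\ge\lambda\,\|\Phi_k(W^{(k)}_{i_k})\|_A$. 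Passing to a subsequence, we may assume $i_k=i$ and $j_k=j$ are fixed. By the frequency hypothesis with $\epsilon_k\to0$ and $M_k\to\infty$, the normalized counting currents satisfy $\eta_{W^{(k)}_i}/\ell_i^{(k)}\to\nu_i$ in $\Curr(F_m)$, and similarly for $j$, because cylinder coordinates of bounded length determine the weak topology.

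\emph{Translate the overlap into trees.} Put $T_k=T_A\Phi_k$, meaning $T_A$ with the action twisted by $\Phi_k$. Then $\|\Phi_k(g)\|_A=\|g\|_{T_k}$ and $\langle T_k,\eta_g\rangle=\|g\|_{T_k}$. Let $\mu_k=\eta_{W_i^{(k)}}/\ell^{(k)}_i$. Rescale $T_k$ by $c_k=1/\langle T_k,\mu_k\rangle$, so that $\langle c_kT_k,\mu_k\rangle=1$. By compactness of $\overline{\mathrm{CV}}_m$, after passing to a subsequence $c'_kT_k\to T\in\cvbar_m\setminus\{0\}$ projectively. Continuity of the intersection form on $\cvbar_m\times\Curr$ (Kapovich--Lustig) together with the filling property of $\nu_i,\nu_j$ shows that $\langle T,\nu_i\rangle,\langle T,\nu_j\rangle>0$. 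Consequently the translation lengths $\|W_i^{(k)}\|_{T_k}$ and $\|W_j^{(k)}\|_{T_k}$ are comparable to $\ell_i^{(k)}\langle T,\nu_i\rangle$ and $\ell_j^{(k)}\langle T,\nu_j\rangle$ after rescaling.

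\emph{Derive a contradiction from a long piece.} The piece $v_k$ is a common segment of the axes in $T_k$ of two elements $g,h$. These are conjugates of $(W_i^{(k)})^{\pm1}$ and $(W_j^{(k)})^{\pm1}$, and they are distinct as symmetrized relators. The overlap has length at least $\lambda\|g\|_{T_k}$. The step I expect to be the main obstacle is converting this linear-size overlap of axes in the varying trees $T_k$ into a contradiction. My plan is to show that such an overlap produces a loss of mass in the intersection with the current of $g$, and to rule that loss out. Concretely, if the axes of $g$ and $h$ overlap along a segment of length $\ge\lambda\|g\|$, then either $g$ and $h$ generate a subgroup whose action is far from free in a controlled way, or a suitable product, such as $gh^{-1}$ or a commutator, has translation length much smaller than expected. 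The small cancellation condition $C'(\lambda'_k)$ in the original basis $A$, with $\lambda'_k\to0$, controls how the frequencies of $W_i,W_j$ behave under such products. Passing to the limit, this would yield a subtree or arc in $T$ carried by a positive-measure part of $\nu_i$ on which $T$ has arcs with nontrivial stabilizers. Alternatively, it would yield a sublinear-length contribution, forcing $\langle T,\nu_i\rangle=0$ and contradicting filling.

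The cleanest route I would try first is a reduction to the known result that filling currents have uniformly positive intersection on the compact set of projectivized trees, which gives a uniform constant $\kappa>0$ with $\langle T,\nu_i\rangle\ge\kappa\, \mathrm{vol}(T)$. This should be combined with a "bounded backtracking/cancellation" estimate for $\Phi_k$ relative to the $A$-cylinders of bounded length $M$. The estimate would show that a piece of relative length $\lambda$ in $\Phi_k(W_i)$ pulls back to a piece in $W_i$, or across $W_i$ and $W_j$, whose relative length is bounded below in terms of $\lambda$ and $\kappa$. That bound contradicts $C'(\lambda'_k)$ with $\lambda'_k\to0$. Root-freeness and irredundancy ensure that the two symmetrized words are genuinely distinct conjugacy representatives. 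The case $g=h^{\pm1}$ up to shift is excluded because it would force $g$ to be a proper power.
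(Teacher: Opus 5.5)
\textbf{There is a genuine gap, and it sits exactly at the step you flag as the main obstacle.} Neither of your two plans supplies that step.

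The missing idea is to measure both the piece and the relator length on one common scale: the optimal Lipschitz constant $\Lambda_A(\phi)$ of an equivariant map $f:T_A\to T_A\phi$.

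\emph{Upper bound on the piece.} Bounded backtracking gives $\BBT(f)\le m\Lambda_A(\phi)$. A Gromov-product argument then shows the following. If two distinct source lines (translates of axes of $W_i^{\pm1},W_j^{\pm1}$) overlap in length $d$, the target lines with the corresponding endpoints overlap in length at most $2\Lambda_A(\phi)d+4m\Lambda_A(\phi)$. The hypothesis $C'(\lambda')$ gives $d<\lambda'\ell_i$. For this you must first check that $d<\min\{\ell_i,\ell_j\}$, so that the overlap label really is a piece of two distinct members of $\calR_A(\mathbf W)$.

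\emph{Lower bound on the denominator.} Use compactness of the slice $\calK_A=\{\Lambda_A=1\}$, joint continuity of the intersection form, and fillingness. These give $c>0$ and a neighbourhood of $\nu_i$, cut out by finitely many cylinder inequalities, on which $\langle T,\eta\rangle\ge c\Lambda_A(T)$ for every nontrivial $T\in\cvbar_m$. Taking $T=T_A\phi$ yields $\|\phi(W_i)\|_A\ge c\ell_i\Lambda_A(\phi)$.

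Dividing the two bounds, the piece ratio is less than $2\lambda'/c+4m/(c\ell_i)$. This is below $\lambda$ once $\lambda'$ is small and $\ell_i$ is large.

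\textbf{Why your substitutes do not work.} The bound $\langle T,\nu_i\rangle\ge\kappa\,\vol(T)$ does hold on $\cv_m$; it follows from the correct bound, since $\vol(T/F_m)\le m\Lambda_A(T)$. But it is far too weak. For the trees that actually occur, $T=T_A\phi$, the quotient is always the rose with $m$ unit edges. So the bound only gives $\|\phi(W_i)\|_A\ge\kappa m\ell_i$. That cannot beat pieces of length of order $\Lambda_A(\phi)\lambda'\ell_i$ when $\Lambda_A(\phi)$ is unbounded.

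Similarly, in your compactness setup, rescaling by $1/\langle T_k,\mu_k\rangle$ gives no control over the pieces. You need to rescale by $1/\Lambda_A(T_k)$, so that the maps $T_A\to T_k/\Lambda_A(T_k)$ stay $1$-Lipschitz with backtracking at most $m$. With that normalization and the overlap-distortion estimate, your contradiction argument does close. The rescaled piece length divided by $\ell_i$ is at most $2\lambda'_k+4m/\ell_i\to0$. Meanwhile $\|W_i^{(k)}\|_{T_k}/(\Lambda_A(T_k)\ell_i)\to\langle T,\nu_i\rangle>0$ for a limit $T\in\calK_A$.

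As written, however, the proposal contains neither ingredient. The alternative plan (products with small translation length, arcs with nontrivial stabilizers in the limit) is speculation, not an argument.
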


Thus, for root-free irredundant tuples, a property quantified over all automorphisms of $F_m$ follows from ordinary small cancellation together with finitely many local frequency inequalities normalized by the individual relator lengths.  Theorem~\ref{thm:finite-frequency-stability} is proved in Section~\ref{sec:abstract-stability}.

We prove Theorem~\ref{mainthm:stability} in Section~\ref{sec:random-reduced}, followed by a restatement and proof of Corollary~\ref{cor:sphere-ball-stability}. Theorem~\ref{mainthm:stability} serves as the main tool to establish subsequent probabilistic results. The proof of Theorem~\ref{mainthm:stability} combines geometric arguments regarding Outer space, bounded backtracking and optimal Lipschitz distortion with arguments involving geodesic currents and the Kapovich-Lustig intersection form~\cite{KL09,KL10}. 

Proposition~\ref{prop:deterministic-piece} provides a key deterministic bound on the length of a piece $v$ for the symmetrized closure of $\phi(\mathbf W)$, where $\mathbf W$ is an arbitrary irredundant root-free $q$-tuple of cyclically reduced words in $F_m$:
\[
 |v|\le2\Lambda_A(\phi)\bigl(\rho_{\mathrm{per}}^{\pm}(\mathbf W)+2m\bigr).
\]
Here $\rho_{\mathrm{per}}^{\pm}(\mathbf W)$ records a geometrically defined upper bound for the maximum piece length for the symmetrization of (the cyclically reduced form of) $\mathbf W$, and $\Lambda_A(\phi)$ is the optimal Lipschitz constant of an $F_m$-equivariant map $T_A\to T_A\phi$, where $T_A$ is the Cayley tree of $F_m=F(A)$ with respect to $A$.  
Lemma~{lem:uniform-filling-neighborhood} shows that for a suitably chosen neigborhood $U$ of a filling current $\nu$, every current $\eta\in U$ has a controlled distortion under the action of arbitrary $\phi\in \Out(F_m)$, namely
\[
\langle T_A,\phi\eta\rangle \ge c \Lambda_A(\phi)
\]
where $c>0$ is some constant independent of $\phi$ and $\eta$. In particular, this inequality applies to ``rational" currents in $U$ corresponding to conjugacy classes in $F_m$. Combining these tools ultimately allows us to deduce Theorem~\ref{thm:finite-frequency-stability}.

Section~\ref{sec:abstract-stability} first proves the deterministic finite-frequency criterion and then derives the probabilistic master estimate from finite-coordinate concentration and signed-overlap tails.  Section~\ref{sec:positive-corollary} then recovers $\lambda$-stability for independent positive Bernoulli words, allowing different strictly positive probability vectors among components, and records an almost-sure extension to finitely supported group random walks; see Corollary~\ref{cor:finitely-supported-random-walk}.

\subsection{Generic multi-relator rigidity}

The preceding stability theorem, combined with Greendlinger's normal-closure theorem and the Nielsen and Whitehead inputs described above, yields the generic multi-relator rigidity theorem.  Let $\Rel(A)$ denote the finite group of signed relabeling automorphisms of $F_m$, that is, automorphisms that permute $A^{\pm1}$ and commute with inversion.  Thus
\[
 |\Rel(A)|=2^m m!.
\]

\begin{mainthm}[Generic few-relator rigidity]\label{mainthm:rigidity}
Fix $m\ge 2$ and $q\ge 1$.  For a finite tuple $R=(r_1,\dots,r_s)$ of elements of $F_m$, write
\[
 G_R=\langle a_1,\dots,a_m\mid r_1,\dots,r_s\rangle,
\]
and denote by $\ol a_i$ the image of $a_i$ in $G_R$.
Here and below, ``finitely specified'' means that, for fixed $m,q$, finite rational auxiliary data are chosen once and for all; no uniform procedure producing these data from $m$ and $q$ is asserted.
There are finitely specified subsets
\[
 \calQ_{m,q}(n)\subseteq\CR_n^q
 \qquad(n\ge 1)
\]
and constants $C,c>0$ such that:
\begin{enumerate}[(1)]
\item For every $n\ge 1$, a uniform random tuple in $\CR_n^q$ belongs to $\calQ_{m,q}(n)$ with probability at least $1-Ce^{-cn}$.  Membership in $\calQ_{m,q}(n)$ is decidable in deterministic polynomial time in $n$.
\item Every $q$-tuple $R\in\calQ_{m,q}(n)$ is irredundant, root-free, and $1/6$-stable.
\item If $R=(r_1,\dots,r_q)\in\calQ_{m,q}(n)$, then $G_R$ is torsion-free, one-ended, and non-elementary hyperbolic; every subgroup generated by at most $m-1$ elements is free; every $m$-tuple generating a nonfree subgroup is Nielsen-equivalent to the standard tuple $(\ol a_1,\dots,\ol a_m)$; and $G_R$ is co-Hopfian, meaning that every injective endomorphism of $G_R$ is surjective.
\item If $R=(r_1,\dots,r_q)\in\calQ_{m,q}(n)$, then $G_R$ is complete, meaning that its center is trivial and every automorphism of $G_R$ is inner.  Its boundary is one-dimensional, connected, and has no local cut points, and hence is homeomorphic to the Menger curve or the Sierpi\'nski carpet.  If $q\ge m-1$, then $\partial G_R$ is the Menger curve.
\item Let $R=(r_1,\dots,r_q)\in\calQ_{m,q}(n)$, and let $S=(s_1,\dots,s_t)$ be an irredundant tuple of nontrivial elements of $F_m$ whose symmetrization of cyclically reduced representatives satisfies $C'(1/6)$.  Then $G_R\cong G_S$ if and only if $t=q$ and there are $\Phi\in\Aut(F_m)$, $\sigma\in\Sym(q)$, signs $\epsilon_i\in\{\pm1\}$, and $g_i\in F_m$ such that
\[
 \Phi(r_i)=g_i s_{\sigma(i)}^{\epsilon_i}g_i^{-1}
 \qquad(1\le i\le q).
\]
\item If $R=(r_1,\dots,r_q)\in\calQ_{m,q}(n)$ and $S=(s_1,\dots,s_q)\in\calQ_{m,q}(n')$, then $G_R\cong G_S$ if and only if $n=n'$ and there are $\Theta\in\Rel(A)$, $\sigma\in\Sym(q)$, and signs $\epsilon_i\in\{\pm1\}$ such that, for each $i=1,\dots,q$, the word $s_i$ is a cyclic permutation of $\Theta(r_{\sigma(i)})^{\epsilon_i}$.  \end{enumerate}
\end{mainthm}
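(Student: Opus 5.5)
We will build $\calQ_{m,q}(n)$ as an intersection of finitely many exponentially generic, polynomial-time decidable conditions:
\begin{enumerate}[(a)]
\item Finite-frequency closeness of each $r_i$ to the uniform measure current $\nu_A$ (normalized so that $\langle T_A,\nu_A\rangle=1$; it is filling), at the scale $(\epsilon,M)$ given by Theorem~\ref{thm:finite-frequency-stability} for $\lambda=1/6$.
\item Root-freeness, irredundance and $C'(\lambda')$ for the symmetrization.
\item The Kapovich--Schupp genericity conditions from \cite{KS05}, which give Nielsen uniqueness and freeness of $(m-1)$-generated subgroups.
\item The Kapovich--Schupp--Shpilrain strict Whitehead-minimality condition from \cite{KSS06}, which gives trivial stabilizer modulo relabeling and the fact that automorphic equivalence of minimal words reduces to relabeling and cyclic permutation. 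We impose it componentwise, and also require that distinct components not be equivalent under $\Aut(F_m)$, up to inversion.
\end{enumerate}
The rational constants $\epsilon,\lambda',M$ are the "finitely specified" data. Each condition is checked by counting subwords and running finitely many Whitehead moves, so it is polynomial in $n$. Part (1) then follows from Theorem~\ref{mainthm:stability}, the Chernoff-type concentration of subword frequencies, and the cited exponential genericity statements. Part (2) follows from Theorem~\ref{thm:finite-frequency-stability}.

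For part (3), $C'(1/6)$ gives torsion-freeness (root-free relators) and hyperbolicity. Nielsen uniqueness from \cite{KS05} gives the statements about subgroups and generating tuples. One-endedness holds because a splitting over a finite group would force a free factor, hence a primitive-type relator, which is excluded by Whitehead minimality. For co-Hopficity, an injective endomorphism maps the standard tuple to a tuple generating a nonfree subgroup. Nielsen uniqueness makes it Nielsen-equivalent to the standard tuple, hence generating, so the endomorphism is surjective.

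For part (4), every automorphism lifts to some $\Phi\in\Aut(F_m)$ preserving $\Normal{R}$. By $1/6$-stability, $\Phi(R)$ is again $C'(1/6)$. Greendlinger's theorem says two symmetrized $C'(1/6)$ sets with the same normal closure coincide up to conjugacy and inversion. Hence $\Phi$ permutes the conjugacy classes of the $r_i^{\pm1}$. Whitehead rigidity then forces $\Phi$ to be inner composed with a relabeling fixing each $r_i$ up to cyclic permutation and inversion. Strict minimality kills nontrivial relabelings, so the automorphism is inner; the center is trivial by non-elementary hyperbolicity. For the boundary, $C'(1/6)$ presentations with generic relators and one end give a connected boundary without local cut points (Champetier/Dahmani--Guirardel--Przytycki type arguments), and it is one-dimensional since the presentation complex is aspherical of dimension 2. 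The case $q\ge m-1$ is Menger rather than Sierpi\'nski because the Euler characteristic $1-m+q\ge0$ rules out a planar 3-manifold-boundary structure; I would cite Kapovich--Kleiner for this.

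For part (5), an isomorphism $G_R\to G_S$ sends the standard tuple to a tuple generating $G_S$, which is nonfree. For the reverse direction we apply Nielsen uniqueness in $G_R$ to the preimage of the standard generators of $G_S$. This shows that $G_S$ is $m$-generated by images of $A$ in a way that lifts to some $\Phi\in\Aut(F_m)$ with $\Phi(\Normal{R})=\Normal{S}$. Then $\Phi(R)$ is $C'(1/6)$ by stability, and $S$ is $C'(1/6)$ by hypothesis, so Greendlinger identifies the two symmetrized sets. Irredundance gives $t=q$ and the stated form. Part (6) then follows from (5) together with the strict minimality in \cite{KSS06}: $\Phi$ maps a minimal $r_i$ to a conjugate of a minimal $s_j^{\pm1}$, which forces equal lengths and relabeling equivalence. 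One common $\Theta$ works for all $i$ by applying the \cite{KSS06} argument to $\Phi$ itself, since the minimality condition implies that any $\Phi$ preserving minimal length of a generic word is a relabeling times inner.

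The main obstacle I expect is this last uniformity step in (6), and the corresponding step in (4). Whitehead rigidity is stated word by word, and $\Phi$ must be pinned down globally. I would first try to derive it from the \cite{KSS06} result that for strictly minimal generic $w$, $\|\Phi(w)\|_A=\|w\|_A$ implies $\Phi\in\Rel(A)\cdot\mathrm{Inn}$, applied to $r_1$ alone.
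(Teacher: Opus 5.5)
The step you flag as the main obstacle is not a real obstacle, but part~(4) has a genuine gap. Your architecture matches the paper's: finite-frequency stability, lifting isomorphisms via Nielsen uniqueness, Greendlinger to identify symmetrized sets, and Kapovich--Schupp--Shpilrain to reduce to relabelings. Replacing the paper's condition $\Stab_{\Gamma_{m,q,n}}(R)=\{1\}$ by pairwise $\Aut(F_m)$-inequivalence of the components is a legitimate, stronger generic condition.

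\textbf{Gap in part~(4): no local cut points.} You give no argument that $\partial G_R$ has no local cut points. Part~(4) is a deterministic statement about \emph{every} tuple in your explicitly defined class. Results of Champetier or Dahmani--Guirardel--Przytycki do not apply to that class unless you add their defining conditions and check that those are exponentially generic and polynomial-time decidable, which you do not do. The missing idea is to feed completeness into the boundary, as the paper does:
\begin{enumerate}[(i)]
\item An essential splitting of a torsion-free hyperbolic group over $\mathbb Z$ yields a Dehn twist of infinite order in $\Out$ (Lemma~\ref{lem:essential-cyclic-splitting}). So $\Out(G_R)=1$ excludes such splittings, and also excludes $G_R$ being a closed surface group.
\item Since $G_R$ is one-ended, torsion-free and not Fuchsian, Bowditch's theory says a local cut point of $\partial G_R$ would produce an essential splitting over a two-ended, hence cyclic, subgroup.
\item Only then do $\dim\partial G_R=1$ (from asphericity, Stallings--Swan and Bestvina--Mess) and Kapovich--Kleiner give the Menger/Sierpi\'nski alternative.
\end{enumerate}

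\textbf{Gap in part~(1): polynomial-time membership.} The Kapovich--Schupp class requires that long relator subwords be neither $\mu$-readable nor $(\mu,L)$-readable. That is, no labelled graph of rank at most $m-1$ (resp.\ $L$) with at most $\mu\ell$ edges reads them. This is neither subword counting nor Whitehead moves, and Kapovich--Schupp's own decision procedure is exponential. You need an argument like Lemma~\ref{lem:readability-poly}. There, the edges traversed by the reading path form a graph of topological complexity bounded by the rank, and its maximal arcs are labelled by subwords of the input, so only polynomially many candidates remain.

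\textbf{Smaller points.}
\begin{enumerate}[(i)]
\item Your one-endedness argument (a free factor forces a primitive-type relator) is unjustified for $q\ge 2$. One-endedness is part of Kapovich--Schupp's Theorem~B; just cite it.
\item Strict minimality alone does not exclude relabelings sending $r_1$ to a cyclic shift of $r_1^{\pm1}$. That exclusion, and $\Stab_{\Out(F_m)}([r_1])=\{1\}$, are the extra conditions defining $\mathcal Z_m$; impose membership in that class explicitly.
\item With those conditions, your flagged uniformity step is routine. Any relabeling $\Theta$ with $\Theta(r_1)$ conjugate to $\Phi(r_1)$ gives $[\Theta^{-1}\Phi]\in\Stab_{\Out(F_m)}([r_1])$, hence $[\Phi]=[\Theta]$. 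So one matched pair determines $\Phi$ globally, which is exactly how the paper handles (4) and~(6).
\end{enumerate}
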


\begin{rem}[Stability Conjecture]\label{rem:KS-stability-conjecture}
The Stability Conjecture of Kapovich--Schupp~\cite[Conjecture~1.2]{KS05b} says the following.  Fix $m\ge 2$ and $q\ge 1$.  There exists an algorithmically recognizable generic family $\mathcal D_n\subseteq  \CR_n^q$, $n\ge 1$, with the following property.  If $\boldsymbol\sigma\in \mathcal D_n, \boldsymbol\tau\in \mathcal D_{n'}^q$ and $\Phi\in\Aut(F_m)$ satisfy
\[
 N_{\calR_A(\boldsymbol\sigma)}
 =N_{\calR_A(\Phi(\boldsymbol\tau))},
\]
then
\[
 \calR_A(\boldsymbol\sigma)
 =\calR_A(\Phi(\boldsymbol\tau)).
\]
Theorem~\ref{mainthm:rigidity} directly gives this conclusion in the present few-relator model, in an exponentially generic and polynomial-time recognizable form.  Indeed, part~\textup{(1)} gives the exponentially generic recognizable classes $\calQ_{m,q}(n)$, while part~\textup{(2)} says that every tuple in these classes is irredundant, root-free, and $1/6$-stable.  Thus, if
\[
 \boldsymbol\sigma\in\calQ_{m,q}(n),
 \qquad
 \boldsymbol\tau\in\calQ_{m,q}(n')
\]
for some $n,n'\ge 1$ and $\Phi\in\Aut(F_m)$, then both $\calR_A(\boldsymbol\sigma)$ and $\calR_A(\Phi(\boldsymbol\tau))$ satisfy $C'(1/6)$.  If these two symmetrized sets have the same normal closure, Theorem~\ref{thm:Greendlinger-normal} gives their equality.  Hence Theorem~\ref{mainthm:rigidity} directly implies the Stability Conjecture (in fact with exponential genericity in the model used here).
\end{rem}

\begin{rem}[Labelled Cayley graphs]\label{rem:labelled-cayley-graphs}
Part~\textup{(6)} has the following equivalent labelled-graph formulation.  If
\[
 R\in\calQ_{m,q}(n),\qquad S\in\calQ_{m,q}(n'),
\]
then $G_R\cong G_S$ if and only if there are $\Theta\in\Rel(A)$ and a graph isomorphism
\[
 \operatorname{Cay}(G_R,A)\longrightarrow \operatorname{Cay}(G_S,A)
\]
that sends every oriented edge labelled $a\in A^{\pm1}$ to an oriented edge labelled $\Theta(a)$.  Equivalently, after relabelling the edge labels of the target Cayley graph by $\Theta^{-1}$, the two Cayley graphs are isomorphic as $A^{\pm1}$-labelled graphs.
\end{rem}

Section~\ref{sec:generic-class} proves Theorem~\ref{mainthm:rigidity}, using the Greendlinger normal-closure theorem from Section~\ref{sec:normal-closure}.

\subsection{Algorithms and enumeration}

The same rigidity description yields efficient isomorphism procedures on the generic class and an asymptotic enumeration of the resulting isomorphism types.

\begin{mainthm}[Isomorphism algorithms]\label{mainthm:algorithms}
Fix $m\ge 2$ and $q\ge 1$.  For a finite tuple $U=(u_1,\dots,u_s)$ of elements of $F_m$, write
\[
 G_U=\langle a_1,\dots,a_m\mid u_1,\dots,u_s\rangle.
\]
\begin{enumerate}[(1)]
\item Fix $R=(r_1,\dots,r_q)\in\calQ_{m,q}(n_0)$.  There is a quadratic-time algorithm for the following restricted isomorphism problem.  The input is a finite tuple $S=(s_1,\dots,s_t)$ of nontrivial cyclically reduced words known to be irredundant and to have symmetrization satisfying $C'(1/6)$; the algorithm decides whether $G_R\cong G_S$.  With
\[
 N=\sum_{j=1}^t|s_j|_A,
\]
the running time is $O(N^2)$.  (The assumption on $S$ can be checked within the same $O(N^2)$ bound; an input failing it is reported as outside the restricted problem.)
\item There is a linear-time algorithm which, given tuples $R\in\CR_n^q$ and $S\in\CR_{n'}^q$ known to belong to $\calQ_{m,q}(n)$ and $\calQ_{m,q}(n')$, respectively, decides whether $G_R\cong G_S$.  The algorithm's running time is linear in $\max\{n,n'\}$.  The algorithm does not test the assumptions on $R,S$.
\end{enumerate}
\end{mainthm}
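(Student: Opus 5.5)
We prove the two parts separately. Both rest on the explicit isomorphism criteria in Theorem~\ref{mainthm:rigidity}(5),(6), together with standard algorithms for free groups: Whitehead's algorithm, cyclic reduction, and linear-time string matching.

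\emph{Part (1).} Since $R$ is fixed, $G_R$ is a constant. First we check the hypotheses on $S$. We cyclically reduce the $s_j$ and test root-freeness and irredundancy by comparing pairs of cyclic words with a linear-time conjugacy test (for example, Knuth--Morris--Pratt on $s_js_j$). We check $C'(1/6)$ by building a generalized suffix tree of all cyclic shifts of the $s_j^{\pm1}$ and reading off maximal common prefixes of distinct cyclic shifts. This takes $O(N^2)$ time. By Theorem~\ref{mainthm:rigidity}(5), $G_R\cong G_S$ if and only if $t=q$ and some $\Phi\in\Aut(F_m)$ carries the conjugacy classes of $r_i^{\pm1}$ bijectively onto those of $s_{\sigma(i)}^{\pm1}$. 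The key input is the generic Whitehead minimality of~\cite{KSS06}, which we may build into $\calQ_{m,q}$. It says that the tuple $R$ is strictly Whitehead-minimal, and that any tuple in its $\Aut$-orbit of minimal total length is obtained from $R$ by a relabeling in $\Rel(A)$ and cyclic permutations. So the algorithm runs the Whitehead peak-reduction procedure on the tuple of cyclic words $S$. There are finitely many Whitehead automorphisms, each application costs $O(N)$, and each length-reducing step lowers the total length by at least one, so at most $N$ steps are needed and the total time is $O(N^2)$. After this we obtain a minimal tuple $S'$. If its total length differs from $\sum|r_i|$, the answer is no. Otherwise we compare $S'$ with the finitely many tuples $\Theta(R)^{\pm}$, $\Theta\in\Rel(A)$, up to permutation and cyclic shifts, in linear time. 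The main obstacle is justifying this last comparison. We need to know that every minimal-length tuple in the orbit of $R$ differs from $R$ only by relabeling and cyclic permutation. For $q=1$ this is exactly the result of~\cite{KSS06}. For general $q$ it must be extracted from the same strict-minimality property of the generic class, applied to the tuple, via the Whitehead peak-reduction lemma for tuples of cyclic words: minimal elements of one orbit are connected through length-preserving Whitehead moves, and strict minimality rules out all such moves except relabelings.

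\emph{Part (2).} Here Theorem~\ref{mainthm:rigidity}(6) reduces the problem to a purely combinatorial test. First we check $n=n'$. Then, for each of the $2^mm!$ relabelings $\Theta$ (a constant number, since $m$ is fixed) and each sign pattern $\epsilon_i$, we must decide whether the multiset of cyclic words $\{\Theta(r_i)^{\epsilon_i}\}$ equals the multiset $\{s_j\}$. To do this, we compute the lexicographically least cyclic rotation of each word (Booth's algorithm, linear time). We then match the $q$ canonical forms against those of the $s_j$. Since $q$ is fixed, we can simply try all $q!$ permutations, or sort the canonical forms. Every step is $O(n)$ with constants depending only on $m$ and $q$, so the total running time is linear in $\max\{n,n'\}$. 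This part involves no difficulty beyond invoking Theorem~\ref{mainthm:rigidity}(6).
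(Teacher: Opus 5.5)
Your proof is correct. Part~(2) is essentially the paper's argument. Part~(1), however, follows a genuinely different route.

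\textbf{The paper's route.} It runs the single-word search form of Whitehead's algorithm on $[r_1]$ against each $[s_j^{\epsilon}]$. It extracts a candidate automorphism as a recorded Whitehead word and then pushes all of $R$ through that word. Keeping the intermediate images $\Phi_j(r_i)$ of length $O(N)$ needs the filling-current bound coming from (Q2), via Lemma~\ref{lem:apply-search-output}. Testing only the one candidate found needs $\Stab_{\Out(F_m)}([r_1])=\{1\}$.

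\textbf{Your route.} You minimize the whole tuple $S$ by total-length-decreasing Whitehead moves, so every intermediate word automatically has length at most $N$. You then compare the result with the finitely many $\Theta(R)$, up to permutation, inversion and rotation. This avoids both the Lipschitz control and the search output. The price is that you need the tuple versions of Whitehead's reduction and peak-reduction theorems \cite[Chapter~I, Section~4]{LS77}, instead of the single-word package of Theorem~\ref{thm:KSS-input}.

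\textbf{The step you flag as the main obstacle.} It goes through as you sketch it, once you note two things. Relabelings preserve strict minimality, and inner Whitehead moves fix cyclic words. Hence a length-preserving chain starting at $R$ never leaves the relabelings of $R$; this route does not even need the trivial stabilizer.

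There is also a shorter proof, the one used for Theorem~\ref{mainthm:rigidity}(6):
\begin{enumerate}[(1)]
\item Each $r_i$ is individually minimal, so a minimal-total-length image has every component of length $|r_i|$.
\item Theorem~\ref{thm:KSS-input}(3) then gives a relabeling for one component.
\item The trivial outer stabilizer of $[r_1]$ makes that relabeling global.
\end{enumerate}

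\textbf{Two small points.} First, nothing needs to be ``built into'' $\calQ_{m,q}$, since (Q4) already puts every $r_i$ in $\mathcal Z_m$. Second, root-freeness is not part of the promise on $S$. For instance, the symmetrization of $(ab)^k$ has no pieces at all. So a non-root-free $S$ satisfying the promise should be answered ``no'' (the $r_i$ are root-free), not reported as outside the restricted problem.
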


We prove Theorem~\ref{mainthm:algorithms} in Subsection~\ref{subsec:algorithms}.

\begin{mainthm}[Asymptotic number of isomorphism types]\label{mainthm:counting}
Fix $m\ge 2$ and $q\ge 1$.  For $n\ge 1$ let $I_{m,q}^{\mathrm{cr}}(n)$ be the number of group isomorphism types represented by presentations
\[
 \langle a_1,\dots, a_m\mid r_1,\dots,r_q\rangle,
\]
where $r_1,\dots, r_q$ are cyclically reduced words of length $n$ in $F_m=F(a_1,\dots, a_m)$. 
Then
\[
 I_{m,q}^{\mathrm{cr}}(n)
 \sim
 \frac{(2m-1)^{qn}}{2^{m+q}m!\,q!\,n^q}
 \qquad(n\to\infty).
\]
\end{mainthm}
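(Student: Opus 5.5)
The plan is to count isomorphism types in two parts: the generic class $\calQ_{m,q}(n)$, where Theorem~\ref{mainthm:rigidity}(6) reduces isomorphism to an explicit finite group action, and the non-generic remainder, whose contribution is negligible. Let $N_n=|\CR_n|$. Since $N_n=(2m-1)^n+O(1)$ for fixed $m$ (with the standard trace formula, e.g.\ $N_n=(2m-1)^n+(m-1)(-1)^n+m$), we have $|\CR_n^q|\sim(2m-1)^{qn}$.

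First I would set up the group action. The group
\[
\Gamma=\Rel(A)\times\bigl(\Z_n^q\rtimes(\Z_2^q\rtimes\Sym(q))\bigr)
\]
acts on $\CR_n^q$ by relabeling, cyclic permutations of each component, inversions, and reordering. Its order is
\[
|\Gamma|=2^m m!\cdot n^q\cdot 2^q q!.
\]
By Theorem~\ref{mainthm:rigidity}(6), two tuples of $\calQ_{m,q}(n)$ give isomorphic groups exactly when they lie in the same $\Gamma$-orbit. The theorem also forces $n=n'$, so generic tuples of different lengths never collide. This is only relevant if types are counted across lengths; here $n$ is fixed.

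The key step is to show that for all but an exponentially small proportion of tuples the $\Gamma$-stabilizer is trivial. Then the number of orbits meeting $\calQ_{m,q}(n)$ is
\[
\frac{|\calQ_{m,q}(n)|(1+o(1))}{|\Gamma|}.
\]
Concretely, I would bound, for each nonidentity $\gamma\in\Gamma$, the number of tuples fixed by $\gamma$. There are three cases.
\begin{itemize}
\item If $\gamma$ moves some index via $\sigma$, then a fixed tuple has $r_{\sigma(i)}$ determined by $r_i$. This cuts at least one free factor, giving at most $(2m-1)^{(q-1)n}$ tuples.
\item If $\sigma$ fixes $i$ but $\gamma$ acts on $r_i$ by $w\mapsto \text{shift}_k(\Theta(w))^{\pm1}$ nontrivially, then the fixed-word count is $O((2m-1)^{n/2})$. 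This is the standard estimate from \cite{KSS06}: a word fixed by a nontrivial symmetry of the dihedral-times-relabeling type is determined by roughly half (or a proper fraction) of its letters, using root-freeness for pure shifts.
\item In either case, summing over the $|\Gamma|=O(n^q)$ elements gives a proportion of $O(n^q(2m-1)^{-n/2})$, which is exponentially small.
\end{itemize}
Combined with $|\calQ_{m,q}(n)|\ge(1-Ce^{-cn})|\CR_n^q|$, this gives the lower bound
\[
I^{\mathrm{cr}}_{m,q}(n)\ge(1-o(1))\frac{(2m-1)^{qn}}{2^{m+q}m!\,q!\,n^q}.
\]

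For the upper bound, tuples outside $\calQ_{m,q}(n)$ number at most $Ce^{-cn}(2m-1)^{qn}$. Each contributes at most one isomorphism type, which is $o\bigl((2m-1)^{qn}/n^q\bigr)$. Generic orbits contribute at most $|\calQ_{m,q}(n)|/|\Gamma|\cdot(1+o(1))$. Here I would bound orbits with nontrivial stabilizer by the total count of tuples with nontrivial stabilizer, which is exponentially small relative to $(2m-1)^{qn}$. One point to check: a non-generic tuple may be isomorphic to a generic one, but that only reduces the count, so the upper bound is unaffected.

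The main obstacle is the fixed-point count in the second case, where a component satisfies $r=\text{shift}_k(\Theta(r))^{\pm1}$ with $(\Theta,k,\pm)$ nontrivial. I would argue as follows.
\begin{itemize}
\item If $\Theta=1$ with no inversion and $k\ne0$, then $r$ is a proper power, and the count is $O(n(2m-1)^{n/2})$.
\item If $\Theta$ has finite order $d>1$ and the map is a shift, then $r$ is determined by a segment of length about $\gcd(k,n)$ or $n/d$. A fixed-point of an involutive letter map combined with inversion pins about half the letters.
\end{itemize}
This is exactly the type of estimate in \cite{KSS06} for the one-relator count, which I would cite or reprove letter-by-letter.
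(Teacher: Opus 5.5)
Your overall plan works, but one intermediate estimate is false as stated. In your second case you claim that every nontrivial component map $w\mapsto\sh_k(\Theta(w))^{\pm1}$ has $O((2m-1)^{n/2})$ fixed words, because a fixed word is determined by a proper fraction of its letters. This fails for \emph{pure relabelings} ($k=0$, no inversion, $\Theta\ne1$), and your final case list omits this case. There a fixed word is not determined by a short segment. Instead, every letter must lie in $\operatorname{Fix}(\Theta)$, which is a proper inversion-closed subset of $A^{\pm1}$ with at most $2m-2$ elements. For example, take $m\ge3$ and $\Theta$ inverting $a_1$ only. Then the fixed words are all cyclically reduced words of length $n$ in $F(a_2,\dots,a_m)$, roughly $(2m-3)^n$ of them. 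This is not $O((2m-1)^{n/2})$: compare $3^n$ with $5^{n/2}$ when $m=3$. So your proportion $O(n^q(2m-1)^{-n/2})$ is wrong.

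The repair is easy. In the pure-relabeling case the fixed proportion is $O\bigl(((2m-3)/(2m-1))^n\bigr)$. The genuinely shifted cases ($x_j=\theta(x_{j+k})$ with $k\ne0$) and the inverted cases ($x_j=\theta(x_{s-j})^{-1}$ for a fixed $s$) are determined by about $n/2$ letters, as you say. Since $|\Gamma_{m,q,n}|$ is polynomial in $n$, any exponentially small bound per group element suffices. Alternatively, genericity of $\mathcal Z_m$ (Theorem~\ref{thm:KSS-input}(1)) excludes the relabeling and inverse-conjugacy symmetries directly.

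With that correction your argument is valid, but it is organized differently from the paper's. The paper builds freeness into the generic class:
\begin{enumerate}[(i)]
\item condition (Q4) requires $\Stab_{\Gamma_{m,q,n}}(R)=\{1\}$;
\item Proposition~\ref{prop:Qcr-generic} proves (Q4) is generic, using $\mathcal Z_m$ when the component permutation is trivial and a $|\CR_n|^{-1}$ union bound otherwise;
\item Lemma~\ref{lem:Gamma-order} proves the action is faithful;
\item $\calQ_{m,q}(n)$ is checked to be $\Gamma_{m,q,n}$-invariant.
\end{enumerate}
Hence $\calQ_{m,q}(n)$ is a disjoint union of free orbits and represents exactly $|\calQ_{m,q}(n)|/|\Gamma_{m,q,n}|$ types, with $|B_n|$ bounding the rest.

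You instead estimate fixed points on all of $\CR_n^q$. So you need neither invariance of the class nor (Q4), and your count yields faithfulness for large $n$ as a byproduct. You use faithfulness implicitly when you equate free-orbit size with $|\Gamma|$.

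Your two bounds also decouple more than you exploit.
\begin{enumerate}[(a)]
\item The lower bound needs only Theorem~\ref{mainthm:rigidity}(6) and the fact that an orbit has at most $|\Gamma_{m,q,n}|$ elements. No stabilizer estimate is needed.
\item The upper bound needs only the fixed-point count, with no rigidity input. Tuples in the same $\Gamma_{m,q,n}$-orbit always present isomorphic groups, so $I^{\mathrm{cr}}_{m,q}(n)$ is at most the number of orbits on $\CR_n^q$. Burnside's lemma bounds this number by $|\CR_n|^q/|\Gamma_{m,q,n}|+|\Gamma_{m,q,n}|^{-1}\sum_{\gamma\ne1}|\operatorname{Fix}(\gamma)|$.
\end{enumerate}
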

Here $\sim$ means that the ratio of the two quantities goes to $1$ as $n\to\infty$.
We prove Theorem~\ref{mainthm:counting} in Subsection~\ref{subsec:counting}.  For $q=1$, the conclusion of Theorem~\ref{mainthm:counting}  agrees with the precise one-relator asymptotic of Kapovich--Schupp~\cite{KS05b}.

\subsection{Organization}

Section~\ref{sec:preliminaries} gives the preliminaries.  Sections~\ref{sec:signed-overlap}--\ref{sec:abstract-stability} establish the deterministic overlap estimate, the finite-frequency stability criterion, and its probabilistic consequences.  Section~\ref{sec:random-reduced} proves Theorem~\ref{mainthm:stability} and Corollary~\ref{cor:sphere-ball-stability}, and Section~\ref{sec:positive-corollary} treats positive Bernoulli words and records an almost-sure extension to finitely supported group random walks.  Section~\ref{sec:generic-input} records the Nielsen and Whitehead inputs.  Section~\ref{sec:normal-closure} proves normal-closure rigidity.  Sections~\ref{sec:isomorphism}--\ref{sec:generic-class} prove Theorem~\ref{mainthm:rigidity}, and Section~\ref{sec:algorithms-counting} gives the algorithms and enumeration.

\section{Preliminaries}\label{sec:preliminaries}

We collect notation and standard facts used below.

\subsection{Free groups, cyclic words, and small cancellation}\label{subsec:free-small-prelim}

\begin{conv}[Standing free-group notation]\label{conv:standing}
Unless explicitly stated otherwise, $m\ge 2$, $F_m=F(A)$ is the free group with fixed basis
\[
 A=\{a_1,\dots,a_m\},
\]
and $T_A$ is its unit-edge Cayley tree.  We identify elements of $F_m$ with their freely reduced words over $A^{\pm1}$.  For $u\in F_m$, let $|u|_A$ be its freely reduced length, $[u]$ its conjugacy class, and $\|u\|_A$ the cyclically reduced length of $[u]$, equivalently the translation length of $u$ in $T_A$.  Subscripts are omitted when the basis is clear.  A \emph{positive word} is a word over $A$, and $A^n$ denotes the set of positive words of length $n$.  This convention remains in force throughout the paper unless explicitly overridden.
\end{conv}

\begin{defn}[Root-free and irredundant tuples]\label{def:irredundant}
\begin{enumerate}[(1)]
\item A nontrivial element $g\in F_m$ is \emph{root-free} if it is not a proper power.
\item A tuple $(s_1,\dots,s_t)$ of nontrivial elements of $F_m$ is \emph{root-free} if every entry is root-free.
\item A tuple of nontrivial elements of $F_m$ is \emph{irredundant} if no two distinct entries are conjugate to one another or to one another's inverses.
\end{enumerate}
\end{defn}

For a set $R$ of nontrivial cyclically reduced words in $F_m$, its \emph{symmetrization} $\mathcal R$ consists of all cyclic permutations of elements of $R^{\pm 1}$; and such a set $R$ is \emph{symmetrized} if $R=\mathcal R$.
A nonempty word $v$ is a \emph{piece} of a symmetrized set $\calR$ if it is a common initial segment of two distinct elements of $\calR$.  The set $\calR$ satisfies $C'(\lambda)$ if every piece $v$ beginning $r\in\calR$ satisfies
\[
 |v|<\lambda |r|_A.
\]
Let $\mathbf g=(g_1,\dots,g_q)$ be a tuple of nontrivial elements of $F_m$, and choose cyclically reduced representatives $u_i\in[g_i]$.  We denote by $\calR_A(\mathbf g)$ the symmetrization of the set $\{u_1,\dots, u_q\}$.  These conventions are consistent with the standard small-cancellation terminology of~\cite[Chapter~V]{LS77}.

 For $\phi\in\Out(F_m)$, define $\calR_A(\phi(\mathbf g)):=\calR_A(\Phi(\mathbf g))$ and $\|\phi(g_i)\|_A:=\|\Phi(g_i)\|_A$ using any representative $\Phi\in \Aut(F_m)$ of the outer automorphism $\phi$. These notations are well defined because inner automorphisms preserve conjugacy classes. 

\begin{defn}[$\lambda$-stability]\label{def:lambda-stability}
Let $0<\lambda<1$.
\begin{enumerate}[(1)]
\item A set $\mathcal R\subseteq F_m\setminus\{1\}$ is \emph{$\lambda$-stable} if, for every $\Phi\in\Aut(F_m)$, the symmetrization of cyclically reduced forms of the elements of $\Phi(\mathcal R)$ satisfies $C'(\lambda)$.
\item A tuple is $\lambda$-stable if its underlying set is $\lambda$-stable.
\item A nontrivial element is $\lambda$-stable if its singleton set is $\lambda$-stable.
\end{enumerate}
\end{defn}

Let $v$ be a nontrivial freely reduced word over $A^{\pm1}$ and $w=x_0\cdots x_{\ell-1}$ a nontrivial cyclically reduced word of length $\ell$.  Put
\[
 (v,w)_A
 :=\#\left\{s\in\Z/\ell\Z:
 x_sx_{s+1}\cdots x_{s+|v|_A-1}=v\right\},
\]
where subscripts are read modulo $\ell$.  Thus $(v,w)_A$ is the number of nonsymmetrized cyclic occurrences of $v$ in $w$.  Put
\[
 \langle v,w\rangle_A
 :=(v,w)_A+(v^{-1},w)_A
 =(v^{\pm1},w)_A.
\]
Thus $\langle v,w\rangle_A$ is the corresponding symmetrized cyclic occurrence count; the notation $(v^{\pm1},w)_A$ is shorthand for the displayed sum.  Both quantities depend only on the cyclic word represented by $w$.

For later counting, the number of freely reduced words of length $n\ge 1$ is
\[
 |\FR_n|=2m(2m-1)^{n-1}.
\]
The number of cyclically reduced words is
\begin{equation}\label{eq:number-cyclically-reduced}
 |\CR_n|=(2m-1)^n+m+(m-1)(-1)^n.
\end{equation}
Indeed, $|\CR_n|$ is the trace of the adjacency matrix on $A^{\pm1}$ in which a letter may be followed by every letter except its inverse; that matrix has eigenvalues $2m-1$, $1$ with multiplicity $m$, and $-1$ with multiplicity $m-1$.

A \emph{signed relabeling automorphism}, or simply a \emph{relabeling automorphism}, is an automorphism induced by a permutation of $A^{\pm1}$ commuting with inversion.  The finite group of such automorphisms is denoted $\Rel(A)$ and has order
\[
 |\Rel(A)|=2^m m!.
\]

\subsection{Trees and Outer space}\label{subsec:outer-prelim}

We use Culler--Vogtmann Outer space and its length-function compactification~\cite{CV86,CM87,CL95,Gui00,LL03,FM11}.  Let $\cv_m$ be the space, up to equivariant isometry, of free, minimal, discrete simplicial metric $F_m$-trees with finite quotient, and let $\cvbar_m$ be its length-function closure.  Its nonzero points are nontrivial minimal very small trees: tripod stabilizers are trivial and nondegenerate arc stabilizers are trivial or maximal cyclic; see also~\cite{GL07,Gui08}.  For $T\in\cvbar_m$ and $g\in F_m$, let
\[
 \|g\|_T:=\inf_{x\in T}d_T(x,gx)
\]
be the translation length.  If $T$ is free and simplicial and $g\ne1$, then $g$ has an axis, denoted $A_T(g)$.

We use the standard right action of $\Out(F_m)$ on $\cvbar_m$, denoted
$T\phi$, and characterized by
\[
 \|g\|_{T\phi}=\|\phi(g)\|_T
 \qquad
 (T\in\cvbar_m,\ \phi\in\Out(F_m),\ g\in F_m).
\]
Thus $T_A\phi$ is the pullback of $T_A$ by $\phi$; it may be represented by the same underlying metric tree as $T_A$, with the $F_m$-action precomposed by any automorphism representing $\phi$.  In particular,
\[
 \|g\|_{T_A\phi}=\|\phi(g)\|_A.
\]
\begin{defn}[Lipschitz factor and normalized slice]\label{def:LambdaA}
Let $T\in\cvbar_m\setminus\{0\}$.
\begin{enumerate}[(1)]
\item The \emph{$A$-Lipschitz factor} of $T$ is
\[
 \Lambda_A(T)
 :=\inf\{\Lip(f):f:T_A\to T\text{ is }F_m\text{-equivariant}\}.
\]
\item The corresponding \emph{normalized slice} is
\[
 \calK_A:=\{T\in\cvbar_m:\Lambda_A(T)=1\}.
\]
\item For $\phi\in\Out(F_m)$, put
\[
 \Lambda_A(\phi):=\Lambda_A(T_A\phi).
\]
\end{enumerate}
\end{defn}

The definition of $\Lambda_A(T)$ has a concrete displacement interpretation.  An
$F_m$-equivariant map $f:T_A\to T$ is determined, after straightening the
images of edges to geodesic segments, by the image $x=f(1)$ of the identity
vertex.  Since every edge of $T_A/F_m$ has length one, the straightened map has
Lipschitz constant
\[
 \max_{a\in A}d_T(x,ax).
\]
Consequently,
\begin{equation}\label{eq:Lambda-displacement}
 \Lambda_A(T)=\inf_{x\in T}\max_{a\in A}d_T(x,ax).
\end{equation}

For $\phi\in\Out(F_m)$, the tree $T_A\phi$ lies in $\cv_m$.  Passing to
quotient marked graphs identifies an equivariant map
$T_A\to T_A\phi$ with a change-of-marking map between roses.  In the
terminology of Francaviglia--Martino~\cite[Definition~3.7]{FM11}, such a
piecewise-linear map is \emph{optimal} if at every vertex of its maximally
stretched subgraph there is a legal turn, equivalently, the incident
maximally stretched edge germs are not all mapped into one common direction.
They prove that optimal maps exist and that the stretching factor of every
optimal map realizes the minimal Lipschitz constant
~\cite[Proposition~3.11]{FM11}.  Thus an equivariant optimal map
$f:T_A\to T_A\phi$ may be chosen with
\[
 \Lip(f)=\Lambda_A(\phi).
\]

\begin{prop}[Outer-space facts]\label{prop:outer-space-facts}
The following hold.
\begin{enumerate}[(1)]
\item The projectivized compactification $\mathbb P\cvbar_m$ is compact.
\item The function $\Lambda_A$ is positive, continuous, and homogeneous on $\cvbar_m\setminus\{0\}$, and $\calK_A$ is compact.
\end{enumerate}
\end{prop}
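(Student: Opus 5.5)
Part (1) is the classical theorem that the projectivized closure of Outer space is compact; I will quote it from Culler--Morgan~\cite{CM87}. The argument I have in mind is as follows. Fix the finite set $\mathcal S$ of words of length at most $2$ in $A$. By Culler--Morgan, a nonzero point $T\in\cvbar_m$ is determined up to scale by translation lengths, and finitely many such lengths control all the others. The map $T\mapsto (\|g\|_T)_{g\in F_m}$, viewed in $\mathbb P(\R_{\ge0}^{F_m})$, embeds $\mathbb P\cvbar_m$ as a closed subset. Moreover, the normalization $\max_{g\in\mathcal S}\|g\|_T=1$ bounds every coordinate: $\|g\|_T\le |g|_A\cdot$const. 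Tychonoff compactness of the resulting product of intervals, together with closedness of the very small condition under length-function limits (Cohen--Lustig~\cite{CL95}), gives (1).

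For part (2), homogeneity is immediate from the definitions: if $f:T_A\to T$ is equivariant with $\Lip(f)=L$, then viewed into $cT$ it has Lipschitz constant $cL$. Positivity and continuity I would derive from \eqref{eq:Lambda-displacement}, comparing $\Lambda_A(T)$ with $\max_{g\in\mathcal S}\|g\|_T$.

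\emph{Upper bound.} For every $x\in T$ and $a\in A$ we have $\|a\|_T\le d_T(x,ax)$. Applying the triangle inequality to $ab$ gives $\|ab\|_T\le 2\max_a d_T(x,ax)$. Hence
\[
\max_{g\in\mathcal S}\|g\|_T\le 2\Lambda_A(T).
\]

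\emph{Lower bound.} I will use Serre's lemma on a pair of elliptic isometries: if $a$ and $b$ are both elliptic with disjoint fixed sets, or hyperbolic with disjoint axes, then $ab$ is hyperbolic. Combining this with the standard estimate that
\[
\inf_x\max_{a\in A} d(x,ax)\le C\max_{g\in\mathcal S}\|g\|_T
\]
for a constant $C=C(m)$ (obtained by choosing $x$ near the characteristic sets), we get comparability up to multiplicative constants. Positivity then follows, because a nonzero very small tree has some $g\in\mathcal S$ acting hyperbolically. Otherwise, Serre's lemma makes all of $F_m$ fix a point, contradicting nontriviality and minimality.

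Continuity is the main obstacle, since $T\mapsto\inf_x\max_a d_T(x,ax)$ is defined through the geometry of $T$, not only through length functions. My first approach is to express $\Lambda_A(T)$ via translation lengths. For trees one has
\[
\inf_x\max_{a\in A}d(x,ax)=\max_{a,b\in A}\tfrac12\max\bigl(\|a\|,\|b\|,\ldots\bigr),
\]
which suggests the Helly-type formula $\Lambda_A(T)=\max\bigl(\max_a\|a\|_T,\ \max_{a,b}\tfrac12\|ab^{\pm1}\|_T\bigr)$. This formula holds because pairwise intersecting subtrees of an $\R$-tree have nonempty common intersection (the Helly property for the sets $\{x: d(x,ax)\le r\}$, which are convex). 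So I would first prove that $\{x:d(x,ax)\le r\}\cap\{x:d(x,bx)\le r\}\ne\emptyset$ if and only if $\|a\|,\|b\|\le r$ and $\|ab^{-1}\|\le 2r$, via the standard distance formula between characteristic sets. Given this, $\Lambda_A$ is a finite maximum of coordinate functions that are continuous in the length-function topology, so it is continuous. Finally, $\calK_A$ is closed and meets each projective ray exactly once, by positivity and homogeneity. It is therefore homeomorphic to $\mathbb P\cvbar_m$, and so compact by (1).
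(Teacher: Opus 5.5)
Your proof is correct. For continuity, and in passing positivity, it takes a different route from the paper.

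\textbf{The paper's route.} The paper quotes Culler--Morgan for (1), calls homogeneity immediate, and cites continuity as standard. It gets positivity by observing that $\Lambda_A(T)=0$ makes every $g\in F_m$ elliptic, which forces a global fixed point. The short-word formula \eqref{eq:Lambda-short-words} appears only afterwards. The paper derives it from Francaviglia--Martino candidate loops on $\cv_m$ and extends it to $\cvbar_m$ by density, using the continuity asserted in this proposition.

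\textbf{Your route.} You instead prove that formula directly for every $\R$-tree. You use the Helly property of the convex sets $C_a(r)=\{x:d_T(x,ax)\le r\}$ together with your pairwise criterion, and the criterion does hold:
\begin{itemize}
\item If the characteristic sets of $a,b$ are at distance $D>0$, then $\|ab^{-1}\|_T=\|a\|_T+\|b\|_T+2D$. The neighbourhoods of radii $(r-\|a\|_T)/2$ and $(r-\|b\|_T)/2$ meet exactly when $2r\ge\|ab^{-1}\|_T$.
\item If the characteristic sets meet, then $\|ab^{-1}\|_T\le\|a\|_T+\|b\|_T$. Both conditions then hold for all $r\ge\max\{\|a\|_T,\|b\|_T\}$.
\end{itemize}

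The same argument shows that the infimum in \eqref{eq:Lambda-displacement} is attained. Continuity, positivity and two-sided comparison with $\max_{g\in\mathcal S}\|g\|_T$ for your finite set $\mathcal S$ therefore all follow at once. For positivity, a common point of the sets $C_a(0)$ is a global fixed point. Consequently your ``lower bound'' paragraph with the unspecified constant $C(m)$ is superfluous and should be dropped. So should the garbled intermediate display, in which $\tfrac12$ wrongly multiplies $\|a\|,\|b\|$.

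\textbf{Comparison.} Your route gives a self-contained argument. It turns the paper's cited continuity into a consequence rather than an input. It also yields \eqref{eq:Lambda-short-words} on all of $\cvbar_m$ without Francaviglia--Martino or a density argument. The paper's route is shorter but rests on citations.

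\textbf{Minor point.} In your sketch of (1), the bound $\|g\|_T\le|g|_A$ under your normalization uses the inequality $\Lambda_A(T)\le\max_{g\in\mathcal S}\|g\|_T$ from part (2). This forward reference is harmless, since (1) is quoted anyway.
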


\begin{proof}
Culler--Morgan compactness~\cite[Theorem~4.5]{CM87} gives part~\textup{(1)}; see also~\cite{CL95,Gui00,LL03}.  Homogeneity of \eqref{eq:Lambda-displacement} is immediate, and continuity in the length-function topology is standard~\cite{CM87,FM11}; compare~\cite{GL07,Gui08}.  For positivity, if $\Lambda_A(T)=0$, choose $x_k\in T$ with $\max_{a\in A}d_T(x_k,ax_k)\to0$.  The displacement of each word at $x_k$ is bounded by its word length times this maximum, so every $g\in F_m$ is elliptic.  A finitely generated group acting on an $\mathbb R$-tree with every element elliptic has a global fixed point, contradicting nontrivial minimality.  Thus every projective class has a unique representative with $\Lambda_A(T)=1$, and
\[
 [T]\longmapsto \frac{T}{\Lambda_A(T)}
\]
is a homeomorphism $\mathbb P\cvbar_m\to\calK_A$, so $\calK_A$ is compact.
\end{proof}

There is also a useful finite description of the quantity in
Definition~\ref{def:LambdaA}.  Proposition~3.15 of~\cite{FM11} says that the
maximal stretching between two marked metric graphs is realized by a
candidate loop.  When the source is the standard rose $T_A/F_m$, the
candidates are represented by cyclically reduced words of $A$-length at most
two.  Hence, for $T\in\cv_m$,
\begin{equation}\label{eq:Lambda-short-words}
 \Lambda_A(T)
 =\max\left\{
 \frac{\|u\|_T}{|u|_A}:
 u\text{ cyclically reduced},\ 1\le |u|_A\le2
 \right\}.
\end{equation}
Thus $\Lambda_A(T)$ is determined by the translation lengths $\|u\|_T$ of
freely reduced words $u$ with $1\le |u|_A\le2$.  Since $\cv_m$ is dense in
$\cvbar_m$, the left-hand side of \eqref{eq:Lambda-short-words} is continuous
by Proposition~\ref{prop:outer-space-facts}, and the right-hand side is a
finite maximum of translation-length coordinates, the same formula holds for
every nontrivial $T\in\cvbar_m$.

If $T,T'$ are free, minimal, cocompact simplicial $F_m$-trees and $f:T\to T'$ is an $F_m$-equivariant Lipschitz map, its \emph{bounded-backtracking constant} is
\[
 \BBT(f)=\sup\{d_{T'}(f(z),[f(x),f(y)]):z\in[x,y]\}.
\]
The map $f$ map is a quasi-isometry, hence induces $\partial T\cong\partial T'$.  The following key facts records a particularly useful $BBT(f)$ estimate~\cite[Lemma 3.1]{BFH97}:

\begin{prop}[BBT bound]\label{prop:BBT-bound}
Let $T,T'\in\cv_m$ (where $m\ge 2$) and let $f:T\to T'$ be an $F_m$-equivariant Lipschitz map.
\begin{equation}\label{eq:BBT-standard}
 \BBT(f)\le \Lip(f)\vol(T/F_m),
\end{equation}
where $\vol(T/F_m)$ is the total edge length of the quotient graph.
\end{prop}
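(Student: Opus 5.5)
The plan is the standard bounded-backtracking argument on a fundamental domain, as in \cite[Lemma~3.1]{BFH97}. Fix $x,y\in T$ and $z\in[x,y]$, and write $D=\vol(T/F_m)$. We want to show
\[
 d_{T'}\bigl(f(z),[f(x),f(y)]\bigr)\le \Lip(f)\,D .
\]
Because $T$ is a free simplicial tree with finite quotient, we may subdivide $[x,y]$ at vertices. The image path $f([x,y])$ is a path from $f(x)$ to $f(y)$ in the tree $T'$. The deviation of $f(z)$ from the geodesic $[f(x),f(y)]$ comes from backtracking of this image path, which folds back over itself.

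The first key step is a periodicity observation. Suppose $f(z)$ lies at distance $d>0$ from $[f(x),f(y)]$. Then the image path must go out to $f(z)$ and return along the same segment. So there are points $z_1\in[x,z]$ and $z_2\in[z,y]$ such that $f(z_1)=f(z_2)$ is the projection point $p$ of $f(z)$ onto $[f(x),f(y)]$ (or onto a point in the backtracking region). Moreover $f$ maps $[z_1,z_2]$ to a loop in $T'$ that reaches distance $d$ from its basepoint. We then bound $d\le \tfrac12\Lip(f)\,d_T(z_1,z_2)$ trivially, so it suffices to control how long such a backtracking segment $[z_1,z_2]$ can be, or to replace it by a shorter one.

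The second step, which is the main obstacle, is the reduction to segments of length at most $D$. The idea is to use freeness and equivariance. If $[z_1,z_2]$ has length greater than $D$, it contains two points $u,gu$ in the same $F_m$-orbit with $g\ne1$, at distance at most $D$ apart. This holds because any segment longer than $D$ projects to a path in the finite graph $T/F_m$ of total length greater than $D$, and such a path must revisit some point.

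I would then argue as follows. Let $u\in[z_1,z]$ be the point whose image is farthest from $p$ on one side, and cut at the orbit-repetition. Since $f(gu)=gf(u)$, the translation of the image by $g$ is compatible with the geometry of the loop $f([z_1,z_2])$. The hope is that the loop, when it attains its maximal excursion $d$, does so over a subsegment of length at most $D$ containing $z$ whose endpoints map to the same point. The delicate part is choosing this subsegment so that its endpoints have equal images while staying within length $D$.

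An alternative, and I think the cleaner route, avoids this delicacy by tracking distances to the axis instead of equal images. Consider the function $h(w)=d_{T'}\bigl(f(w),[f(x),f(y)]\bigr)$ on $[x,y]$. It is $\Lip(f)$-Lipschitz and vanishes at $x$ and $y$. Take the maximal open interval $I$ around $z$ on which $h>0$. On $I$ the image stays inside one complementary component of the geodesic $[f(x),f(y)]$, attached at a single point $p$. The endpoints of $I$ both map to $p$.

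If $|I|\le 2D$, then $h(z)\le \Lip(f)\,|I|/2\le\Lip(f)\,D$ and we are done. If $|I|>2D$, I would use the orbit-repetition from the second step to derive a contradiction with minimality of the excursion, or to shorten $I$. The shortening argument requires care, so it would be worth comparing with the original proof in \cite{BFH97} at this point. Since the proposition is quoted directly from \cite[Lemma~3.1]{BFH97}, in the paper I would simply cite it and sketch the argument above.
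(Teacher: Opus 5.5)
The paper gives no argument for this proposition; it simply quotes \cite[Lemma~3.1]{BFH97}. Your fallback of citing it therefore matches the paper. The sketch you propose to go with the citation, however, is not a proof. Everything up to ``if $|I|\le 2\vol(T/F_m)$ we are done'' is correct, but the remaining case is the entire content of the lemma. The mechanism you suggest for it cannot work as stated: an orbit repetition $u,gu$ in $T$ together with $f(gu)=gf(u)$, used either to reach a contradiction or to shorten $I$.

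First, nothing in your outline uses that $F_m$ acts \emph{freely} on $T'$, and without that the inequality is false. Let $F_2=F(a,b)$ act on $\R$ with $a$ and $b$ both translating by $1$. Let $f:T_A\to\R$ send $g$ to its total exponent sum, extended linearly on edges. Then $\Lip(f)=1$ and $\vol(T_A/F_2)=2$, but the geodesic $[1,a^Nb^{-N}]$ maps to $0\to N\to 0$, so $\BBT(f)\ge N$. Every ingredient of your sketch is available verbatim here: the function $h$, the interval $I$, orbit repetitions in $T$, and equivariance. So the missing step must use the freeness of $T'$. For instance, two distinct points $u\ne gu$ of one orbit can never have the same image, since $g\ne1$ fixes no point of $T'$.

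Second, in the case $|I|>2\vol(T/F_m)$ there is in general neither a contradiction nor a way to shorten $I$. Take $\psi(a)=a$, $\psi(b)=a^Nb$, and $T'=T_A\psi\in\cv_2$. Let $f(g)=\psi(g)$ on vertices, linear on edges, so $\Lip(f)=N+1$. For $[x,y]=[a^N,b]$ and $z=1$, the point $f(z)=1$ lies at distance $N$ from $[f(x),f(y)]=[a^N,a^Nb]$. The interval $I$ runs from $a^N$ to the point $z_2$ of $[1,b]$ at parameter $N/(N+1)$. Since $f$ is an isometry on $[a^N,1]$, any interval around $z$ whose endpoints have a common image at distance $\ge N$ from $f(z)$ must have $a^N$ as its left endpoint. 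Thus $|I|>N$ cannot be reduced, and it exceeds $2\vol(T_A/F_2)=4$ once $N\ge4$.

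In this example the bound comes from the \emph{short} side: $h(z)\le\Lip(f)\,d(z,z_2)=N$. What $h(z)>\Lip(f)\vol(T/F_m)$ actually forces is that \emph{both} sides of $z$ in $I$ are longer than $\vol(T/F_m)$. That is the configuration you must rule out, by bounding the depth $h(z)$ directly rather than the length of $I$, and this is where the freeness of the action on $T'$ has to enter. As written, your proposal leaves the lemma unproved in exactly that case. It should be presented only as a citation, not as a sketch of the argument in \cite{BFH97}.
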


In particular, an equivariant $K$-Lipschitz map $T_A\to T'$ has bounded-backtracking constant at most $mK$.

\subsection{Geodesic currents and the intersection form}\label{subsec:currents-prelim}

Let $\partial F_m$ be the Gromov boundary of $F_m$ and put
\[
 \dd F_m=(\partial F_m\times\partial F_m)
 \setminus\{(\xi,\xi):\xi\in\partial F_m\}.
\]
Let $\iota:\dd F_m\to\dd F_m$ be the flip $\iota(\xi,\zeta)=(\zeta,\xi)$.  We identify $\dd F_m$ with the space of oriented bi-infinite geodesics in $T_A$.  A \emph{geodesic current} on $F_m$ is a positive locally finite Borel measure on $\dd F_m$ invariant under the diagonal action of $F_m$ and under $\iota$; see~\cite{KL09,KL10}.  Write $\Curr(F_m)$ for the space of geodesic currents and
\[
 \PCurr(F_m):=(\Curr(F_m)\setminus\{0\})/\R_{>0}.
\]
We equip $\Curr(F_m)$ with the weak-$\ast$ topology.

For $\nu\in\Curr(F_m)$, its \emph{support} is
\[
 \supp(\nu)
 =\{z\in\dd F_m:\nu(U)>0
       \text{ for every open neighborhood }U\ni z\}.
\]
It is a closed $F_m$- and $\iota$-invariant subset of $\dd F_m$.  The boundary action of $\Aut(F_m)$ induces an $\Out(F_m)$-action on currents, since inner automorphisms act trivially on $F_m$-invariant measures, and
\[
 \supp(\phi\nu)=\phi\bigl(\supp(\nu)\bigr)
 \qquad(\phi\in\Out(F_m)).
\]
Our action conventions give $\phi\eta_g=\eta_{\phi(g)}$ and
\begin{equation}\label{eq:intersection-equivariance}
 \langle T_A,\phi\eta\rangle
 =\langle T_A\phi,\eta\rangle
 \qquad(\eta\in\Curr(F_m),\ \phi\in\Out(F_m)).
\end{equation}
Positive rescaling does not change support.

For $1\ne g\in F_m$, let $g^{+\infty},g^{-\infty}\in\partial F_m$ be its attracting and repelling fixed points and put
\[
 \ell_g=(g^{-\infty},g^{+\infty})\in\dd F_m.
\]
The elements of $F_m\cdot\ell_g\cup F_m\cdot\iota(\ell_g)$ are the \emph{periodic leaves associated to $[g]$}.  If $\eta_g$ is the counting current of $[g]$, then
\[
 \supp(\eta_g)=F_m\cdot\ell_g\cup F_m\cdot\iota(\ell_g).
\]
For a nontrivial freely reduced word $v$, let $\operatorname{Cyl}_A(v)\subseteq\dd F_m$ be the cylinder of oriented geodesics containing the oriented segment $[1,v]\subseteq T_A$, and put
\[
 \langle v,\nu\rangle_A
 :=\nu\bigl(\operatorname{Cyl}_A(v)\bigr).
\]
For a nontrivial cyclically reduced word $w$, the cyclic-occurrence notation from Section~\ref{subsec:free-small-prelim} satisfies
\[
 \langle v,\eta_w\rangle_A=\langle v,w\rangle_A.
\]
The cylinder coordinates determine the weak-$\ast$ topology on $\Curr(F_m)$: for every $\nu\in\Curr(F_m)$, neighborhoods specified by finitely many inequalities
\[
 |\langle v,\eta\rangle_A-\langle v,\nu\rangle_A|<\epsilon_v
\]
with nontrivial freely reduced words $v$ and constants $\epsilon_v>0$ form a neighborhood basis at $\nu$.

The geometric intersection form
\[
 \langle\cdot,\cdot\rangle:
 \cvbar_m\times\Curr(F_m)\longrightarrow\R_{\ge0}
\]
is continuous and satisfies
\[
 \langle T,\eta_g\rangle=\|g\|_T.
\]
A nonzero current $\nu$ is \emph{filling} in $\Curr(F_m)$ if $\langle T,\nu\rangle>0$ for every nontrivial $T\in\cvbar_m$.  For boundary trees, dual laminations, and currents, see~\cite{CHL08a,CHL08b,CHL08c}.  We use the following standard facts~\cite{CM87,KL09,KL10,KL15}.

\begin{prop}[Current facts]\label{prop:current-facts}
The following hold.
\begin{enumerate}[(1)]
\item The intersection form is continuous, homogeneous in the tree coordinate, and linear in the current coordinate.
\item For every $\phi\in \Out(F_m), T\in \cvbar_m, \nu\in \Curr(F_m)$ we have
\[
\langle T\phi, \nu\rangle=\langle T, \phi\nu\rangle.
\]
\item For $T\in\cvbar_m\setminus\{0\}$ and $\nu\in\Curr(F_m)$,
\[
 \langle T,\nu\rangle=0
 \quad\Longleftrightarrow\quad
 \supp(\nu)\subseteq L^2(T),
\]
where $L^2(T)\subseteq\dd F_m$ is the dual algebraic lamination of $T$; this is the zero-intersection criterion of~\cite[Theorem~1.1]{KL10}.
\end{enumerate}
\end{prop}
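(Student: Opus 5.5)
Since all three items are standard, the plan is to assemble them from the construction of the intersection form in~\cite{KL09}, verifying only the normalizations that depend on this paper's conventions.

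For part~(1), first recall the definition on the dense subset of free simplicial trees. For $T\in\cv_m$, fix an $F_m$-equivariant quasi-isometry $T_A\to T$. Via the boundary identification $\partial T_A\cong\partial T$, every oriented geodesic $(\xi,\zeta)\in\dd F_m$ determines a geodesic of $T$. One then defines $\langle T,\nu\rangle$ as the $\nu$-integral of the length function on a fundamental domain. Equivalently, $\langle T,\nu\rangle=\sum_e \ell_T(e)\,\nu(\mathrm{Cyl}_T(e))$, summed over edges $e$ of $T/F_m$, where $\mathrm{Cyl}_T(e)$ is the cylinder determined by a lift of $e$ with a chosen orientation. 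In this form, linearity in $\nu$ is immediate, and so is homogeneity under rescaling the metric of $T$.

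Continuity on $\cv_m\times\Curr(F_m)$ follows because the weak-$\ast$ topology is determined by the cylinder coordinates, together with the formula $\langle T,\eta_g\rangle=\|g\|_T$. The extension to $\cvbar_m\times\Curr(F_m)$ is precisely the main theorem of~\cite{KL09}, and I would invoke it directly rather than reprove it. The key input there is the density of rational currents $\R_{>0}\eta_g$ in $\Curr(F_m)$, combined with uniform bounds coming from bounded cancellation. This continuous extension is the only genuinely hard step; everything else in the proof is formal.

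For part~(2), I check the identity on the dense set of rational currents. Using the right action $\|g\|_{T\phi}=\|\phi(g)\|_T$ and the convention $\phi\eta_g=\eta_{\phi(g)}$, we get
\[
\langle T\phi,c\,\eta_g\rangle=c\|g\|_{T\phi}=c\|\phi(g)\|_T=\langle T,c\,\eta_{\phi(g)}\rangle=\langle T,\phi(c\,\eta_g)\rangle .
\]
The identity then extends to all of $\Curr(F_m)$ by linearity, by continuity of both sides (part~(1)), and by continuity of the $\Out(F_m)$-actions on $\cvbar_m$ and on $\Curr(F_m)$. The same argument yields \eqref{eq:intersection-equivariance}.

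For part~(3), I would simply cite~\cite[Theorem~1.1]{KL10}, after checking that the dual algebraic lamination $L^2(T)$ used there agrees with the one in~\cite{CHL08a,CHL08b}. The easy direction serves as a sanity check. For a rational current $\eta_g$, the condition $\supp(\eta_g)\subseteq L^2(T)$ means that $\ell_g$ is a leaf of $L^2(T)$, which holds exactly when $\|g\|_T=0$, and this is the vanishing of $\langle T,\eta_g\rangle$. The general case is the content of~\cite{KL10}.
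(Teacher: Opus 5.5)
Your proposal is correct and matches the paper, which gives no separate argument: it invokes these as standard facts from Culler--Morgan and Kapovich--Lustig, with the main theorem of Kapovich--Lustig's 2009 paper supplying continuity, homogeneity, linearity and $\Out(F_m)$-equivariance, and Theorem~1.1 of their 2010 paper supplying the zero-intersection criterion. Your density-of-rational-currents check of part~(2) and your rational-current check of part~(3) are valid additions; part~(2) is in fact already contained in the $\Out(F_m)$-invariance clause of the 2009 theorem.
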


\section{Signed periodic overlaps and automorphic pieces}\label{sec:signed-overlap}

We identify signed periodic overlaps with intersections of translates of relator axes.

\begin{lem}[Uniqueness of symmetrized representatives]\label{lem:symmetrized-uniqueness}
Let $\mathbf g=(g_1,\dots,g_q)$ be a root-free irredundant tuple of nontrivial elements of $F_m$, and choose cyclically reduced representatives $u_i$ of $[g_i]$.  If a cyclic shift of $u_i^\epsilon$ equals a cyclic shift of $u_j^\delta$, where $\epsilon,\delta\in\{\pm1\}$, then $i=j$, $\epsilon=\delta$, and the starting positions agree modulo $|u_i|$.  Thus every member of $\calR_A(\mathbf g)$ has a unique origin triple consisting of a component, an orientation, and a cyclic position.
\end{lem}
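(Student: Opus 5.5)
Suppose a cyclic shift of $u_i^\epsilon$ equals a cyclic shift of $u_j^\delta$. The plan is to pass from words to conjugacy classes, use irredundancy to force $i=j$ and $\epsilon=\delta$, and then use root-freeness to force the two starting positions to agree.

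First, a cyclic shift of a cyclically reduced word $w$ is conjugate to $w$ in $F_m$. Hence $u_i^\epsilon$ is conjugate to $u_j^\delta$, so $g_i$ is conjugate to $g_j^{\pm1}$. Irredundancy (Definition~\ref{def:irredundant}) then gives $i=j$. Write $u=u_i$ and $\ell=|u|$; the hypothesis now says that a cyclic shift of $u^\epsilon$ equals a cyclic shift of $u^\delta$.

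Next, suppose $\epsilon\ne\delta$. Then $u$ is conjugate to $u^{-1}$ in $F_m$. This is impossible for a nontrivial element of a free group. To see this, suppose $huh^{-1}=u^{-1}$. Then $h^2$ commutes with $u$. Centralizers in $F_m$ are infinite cyclic, so $h^2$ and $u$ generate a cyclic group. Hence $h$ lies in the centralizer of $u$, which is abelian, so $huh^{-1}=u$. This gives $u=u^{-1}$, i.e.\ $u^2=1$, contradicting torsion-freeness. Equivalently, on the axis of $u$ in $T_A$, conjugation would have to reverse translation direction, which cannot happen. Therefore $\epsilon=\delta$, and after inverting both sides if needed we may take $\epsilon=\delta=1$.

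It remains to show that if the cyclic shifts of $u$ starting at positions $s$ and $t$ coincide, then $s\equiv t \pmod{\ell}$. Set $k=t-s\not\equiv 0$. Then the cyclic word $u$ is invariant under rotation by $k$, so it is invariant under rotation by $d=\gcd(k,\ell)$, which is a proper divisor of $\ell$. Consequently $u$ is the $(\ell/d)$-th power of its prefix of length $d$, with $\ell/d\ge 2$. Thus $u$ is a proper power, hence so is its conjugate $g_i$, contradicting root-freeness.

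The final assertion follows: the map from origin triples (component, orientation, cyclic position) to $\calR_A(\mathbf g)$ is surjective by the definition of symmetrization and injective by the above. The only step that needs care is the exclusion of $u$ being conjugate to $u^{-1}$; the centralizer argument handles it.
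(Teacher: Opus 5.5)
Your proof is correct and follows the same route as the paper: irredundancy rules out $i\neq j$, a nontrivial cyclic period (your $\gcd$ argument) contradicts root-freeness when $\epsilon=\delta$, and a centralizer argument shows that no nontrivial element of $F_m$ is conjugate to its inverse. The one compressed step is your inference that $h$ centralizes $u$; as the paper does, treat $h^2=1$ separately (then $h=1$ by torsion-freeness), and otherwise note that $h$ and $u$ both lie in the cyclic centralizer of $h^2$ and hence commute.
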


\begin{proof}
If $i\ne j$, equality makes $g_i$ conjugate to $g_j^{\pm1}$, contrary to irredundancy.  If $i=j$ and $\epsilon=\delta$, distinct starting positions give a nontrivial cyclic period, contrary to root-freeness.  Finally, no nontrivial free-group element is conjugate to its inverse.  If $hgh^{-1}=g^{-1}$, then $h^2$ centralizes $g$.  If $h^2=1$, torsion-freeness gives $h=1$ and $g=g^{-1}$; if $h^2\ne1$, then $g,h$ lie in the cyclic centralizer of $h^2$ and commute.  Both are impossible.
\end{proof}

\begin{defn}[Admissibility and automorphic piece ratio]\label{def:admissible}
A tuple $\mathbf g=(g_1,\dots,g_q)$ of nontrivial elements of $F_m$ is \emph{admissible} if it is root-free and irredundant.

For an admissible tuple define
\[
 \DeltaA(\mathbf g)
 =\sup_{\phi\in\Out(F_m)}
 \sup\left\{
 \frac{|v|}{|r|_A}:
 \begin{array}{l}
 r\in\calR_A(\phi(\mathbf g)),\\
 v\text{ is a piece of }\calR_A(\phi(\mathbf g))\\
 \text{and an initial segment of }r
 \end{array}
 \right\},
\]
where the inner supremum is zero when there are no pieces.  If $\mathbf g$ is not admissible, put $\DeltaA(\mathbf g)=\infty$.
\end{defn}

Admissibility is invariant under automorphisms, and
\begin{equation}\label{eq:Delta-implies-stability}
 \DeltaA(\mathbf g)<\lambda
 \quad\Longrightarrow\quad
 \mathbf g\text{ is admissible and }\lambda\text{-stable}.
\end{equation}

Let
\[
 \mathbf w=(w_1,\dots,w_q),
 \qquad
 w_i=x_{i,0}\cdots x_{i,\ell_i-1}
\]
be a tuple of nontrivial cyclically reduced words.  Indices in the $i$-th component are read modulo $\ell_i$.  For $s\in\Z/\ell_i\Z$, $\epsilon\in\{+,-\}$, and $r\in\Z$, put
\[
 \omega_{i,+,s}(r)=x_{i,s+r},
 \qquad
 \omega_{i,-,s}(r)=x_{i,s-r-1}^{-1}.
\]
Thus $\omega_{i,+,s}$ is a cyclic shift of the bi-infinite periodic word $w_i^\infty$, while $\omega_{i,-,s}$ is a cyclic shift of $(w_i^{-1})^\infty$.

\begin{defn}[Signed periodic-overlap length]\label{def:rho-signed}
For a tuple $\mathbf w$ of nontrivial cyclically reduced words, with signed periodic words $\omega_{i,\epsilon,s}$ as above, define $\rhoSigned(\mathbf w)$ as follows.  It is $0$ if no distinct signed origins exist, $\infty$ if two distinct signed periodic words agree identically, and otherwise the largest integer $R\ge0$ for which distinct signed origins satisfy
\[
 \omega_{i,\epsilon,s}[0,R)
 =\omega_{j,\delta,t}[0,R),
\]
where $\omega[0,R)$ denotes the word $\omega(0)\omega(1)\cdots\omega(R-1)$, with the empty-word convention when $R=0$.
\end{defn}

\begin{lem}[Infinite signed overlap]\label{lem:rho-signed-admissible}
Let $\mathbf w$ be a tuple of nontrivial cyclically reduced words.  Then
\[
 \rhoSigned(\mathbf w)<\infty
 \quad\Longleftrightarrow\quad
 \mathbf w\text{ is admissible}.
\]
\end{lem}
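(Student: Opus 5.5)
We prove both directions by relating identical agreement of signed periodic words to conjugacy relations among the components, and then use the reasoning in the proof of Lemma~\ref{lem:symmetrized-uniqueness}. The one new ingredient is a periodicity observation: two bi-infinite periodic words that agree identically determine the same cyclic word up to taking roots.

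For ($\Leftarrow$), assume $\mathbf w$ is admissible. Suppose two distinct signed origins $(i,\epsilon,s)\neq(j,\delta,t)$ give identical bi-infinite words $\omega_{i,\epsilon,s}=\omega_{j,\delta,t}$.
\begin{itemize}
\item The left side is periodic with minimal period $\ell_i$, since $w_i$ is root-free and hence so is its inverse. The right side has minimal period $\ell_j$.
\item Hence $\ell_i=\ell_j=\ell$.
\item Reading off the first $\ell$ letters shows that a cyclic shift of $w_i^{\epsilon}$ equals a cyclic shift of $w_j^{\delta}$. For $\epsilon=-$, the first $\ell$ letters of $\omega_{i,-,s}$ are exactly a cyclic shift of $w_i^{-1}$.
\item Lemma~\ref{lem:symmetrized-uniqueness} then forces $i=j$, $\epsilon=\delta$, and $s\equiv t \pmod{\ell}$. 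This contradicts distinctness.
\end{itemize}
So no two distinct signed periodic words agree identically. Finiteness of $\rhoSigned$ also needs a uniform bound $R$, not just the absence of identical pairs. There are only finitely many signed origins, and each distinct pair agrees on some maximal finite prefix. Take $R$ to be the maximum of these finitely many prefix lengths.

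I expect the main obstacle to be the periodicity fact used in the first two bullets. If a bi-infinite word has periods $p$ and $p'$, then it has period $\gcd(p,p')$. This is Fine--Wilf in its trivial bi-infinite form: period $p$ means invariance under the shift by $p$, and the shifts by $p$ and $p'$ generate the shift by $\gcd(p,p')$. Consequently, if $\omega_{i,\epsilon,s}=\omega_{j,\delta,t}$ with $\ell_i\ne\ell_j$, the common word has a period $d<\max(\ell_i,\ell_j)$ dividing both. That makes the longer of $w_i,w_j$ a proper power, contradicting root-freeness.

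For ($\Rightarrow$), we prove the contrapositive: if $\mathbf w$ is not admissible, we exhibit two distinct signed origins whose periodic words coincide, so $\rhoSigned(\mathbf w)=\infty$.
\begin{itemize}
\item \emph{Some $w_i$ is not root-free.} Its cyclically reduced form is $u^k$ with $k\ge2$, so $w_i$ itself is a cyclic shift of a power $u^k$ with $k\ge 2$. Then $\omega_{i,+,0}=\omega_{i,+,|u|}$, and the origins $0\ne |u|$ are distinct modulo $\ell_i$.
\item \emph{$w_i$ is conjugate to $w_j^{\pm1}$ for some $i\ne j$.} Both are cyclically reduced, so conjugacy means one is a cyclic permutation of the other, or of its inverse. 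Thus $\omega_{i,+,0}=\omega_{j,\delta,t}$ for suitable $\delta$ and $t$.
\end{itemize}
Either way two distinct signed periodic words agree identically, so $\rhoSigned(\mathbf w)=\infty$.
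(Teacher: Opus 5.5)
Your proof is correct and follows essentially the paper's route. The converse direction is the same as the paper's: a proper power gives a nontrivial period, and conjugacy up to inversion gives matching signed words after a shift. The forward direction rests on the same primitive-root periodicity reasoning together with Lemma~\ref{lem:symmetrized-uniqueness}. The only difference is organizational. You argue contrapositively, using the gcd-of-periods argument to force $\ell_i=\ell_j$ and then letting Lemma~\ref{lem:symmetrized-uniqueness} handle all cases at once, whereas the paper splits into same-orientation, distinct-component, and opposite-orientation cases.
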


\begin{proof}
Identical same-orientation shifts of one component give a nontrivial cyclic period, hence a proper power.  Identical shifts from distinct components make their cyclic words, up to conjugacy and inversion, powers of a common primitive cyclic word.  An exponent of absolute value greater than one violates root-freeness; if both exponents have absolute value one, irredundancy fails.  Identical opposite-orientation shifts of one component would make the element conjugate to its inverse, impossible by Lemma~\ref{lem:symmetrized-uniqueness}.  Thus infinite overlap implies nonadmissibility.  Conversely, a proper power yields identical same-orientation shifts, and conjugate-up-to-inversion components yield identical signed periodic words after suitable shifts.
\end{proof}

\begin{lem}[Pieces and signed periodic overlaps]\label{lem:pieces-rho-signed}
Let $\mathbf w=(w_1,\dots,w_q)$ be an admissible tuple of nontrivial cyclically reduced words.  Let
\[
 P_A(\mathbf w)=\max\{|v|:v\text{ is a piece of }\calR_A(\mathbf w)\},
\]
where $P_A(\mathbf w)=0$ if $\calR_A(\mathbf w)$ has no pieces.  Then:
\begin{enumerate}[(1)]
\item
\[
 P_A(\mathbf w)\le \rhoSigned(\mathbf w).
\]
\item If $|w_i|_A=n$ for every $1\le i\le q$, then
\[
 P_A(\mathbf w)=\rhoSigned(\mathbf w).
\]
\end{enumerate}
\end{lem}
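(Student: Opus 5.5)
For part (1), we start from an arbitrary piece $v$ of $\calR_A(\mathbf w)$. By definition, $v$ is a common initial segment of two distinct elements $r\ne r'$ of $\calR_A(\mathbf w)$. By Lemma~\ref{lem:symmetrized-uniqueness}, which applies because $\mathbf w$ is admissible, each of $r,r'$ has a unique origin triple. So $r$ is the cyclic shift of $w_i^{\epsilon}$ at some position, and $r'$ is the cyclic shift of $w_j^{\delta}$ at some position, with $(i,\epsilon,\cdot)\ne(j,\delta,\cdot)$. Concretely, $r$ is the word $\omega_{i,\epsilon,s}[0,\ell_i)$ for the appropriate $s$. For the negative orientation we check directly from the formula $\omega_{i,-,s}(r)=x_{i,s-r-1}^{-1}$ that the length-$\ell_i$ prefixes of the words $\omega_{i,-,s}$ run exactly over the cyclic shifts of $w_i^{-1}$.

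The two signed origins $(i,\epsilon,s)$ and $(j,\delta,t)$ are then distinct, since distinct words $r\ne r'$ cannot come from the same origin. Since $|v|\le\min(|r|,|r'|)$ and $v$ is a prefix of both words, we get
\[
 \omega_{i,\epsilon,s}[0,|v|)=v=\omega_{j,\delta,t}[0,|v|).
\]
By Lemma~\ref{lem:rho-signed-admissible}, $\rhoSigned(\mathbf w)<\infty$, so it is the largest such agreement length over distinct signed origins. Hence $|v|\le\rhoSigned(\mathbf w)$, and taking the maximum over pieces gives (1). If there are no pieces, (1) holds trivially.

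For part (2), assume all $\ell_i=n$. Let $R=\rhoSigned(\mathbf w)$, and suppose $R>0$, since otherwise there is nothing to prove. Choose distinct signed origins $(i,\epsilon,s)$ and $(j,\delta,t)$ whose periodic words agree on $[0,R)$. We first claim $R<n$. If the two words agreed on $[0,n)$, then, because both have period $n$, they would agree identically, forcing $R=\infty$. That contradicts admissibility by Lemma~\ref{lem:rho-signed-admissible}. So the common prefix $\omega[0,R)$ has length at most $n-1$.

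It is therefore a common initial segment of $r=\omega_{i,\epsilon,s}[0,n)$ and $r'=\omega_{j,\delta,t}[0,n)$, both of which lie in $\calR_A(\mathbf w)$. These words are distinct: if $r=r'$, then by uniqueness of origin triples (Lemma~\ref{lem:symmetrized-uniqueness}) the origins would coincide. Hence $\omega[0,R)$ is a piece of length $R$, so $P_A(\mathbf w)\ge R$, and combining with (1) gives equality.

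The only delicate point is the length bookkeeping in (2). Equal lengths $n$ are needed so that agreement on a window of length $n$ forces identical periodic words. With unequal lengths, a long overlap could exceed $\min(\ell_i,\ell_j)$ without being realized as a piece, which is exactly why (1) is only an inequality in general.
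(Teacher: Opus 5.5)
Your proposal is correct and follows essentially the same route as the paper: for (1), distinct members of $\calR_A(\mathbf w)$ give distinct signed origins whose periodic words share the prefix $v$. For (2), equal periods $n$ force $\rhoSigned(\mathbf w)<n$, so a realizing overlap fits inside the two corresponding length-$n$ members, which are distinct by Lemma~\ref{lem:symmetrized-uniqueness}.

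Your explicit check that the length-$\ell_i$ prefixes of the $\omega_{i,-,s}$ are exactly the cyclic shifts of $w_i^{-1}$ is a detail the paper leaves implicit.
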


\begin{proof}
Let $v$ be a piece of $\calR_A(\mathbf w)$.  Thus $v$ is a common initial segment of two distinct members of $\calR_A(\mathbf w)$.  By Lemma~\ref{lem:symmetrized-uniqueness}, these two members have distinct signed origins.  Their corresponding signed periodic words therefore have a common initial segment of length $|v|$, and hence $|v|\le\rhoSigned(\mathbf w)$.  This proves part~\textup{(1)}.

Suppose now that $|w_i|_A=n$ for every $i$.  By admissibility and Lemma~\ref{lem:rho-signed-admissible}, $\rhoSigned(\mathbf w)<\infty$.  Moreover,
\[
 \rhoSigned(\mathbf w)<n.
\]
Indeed, if two distinct signed origins had a common periodic prefix of length at least $n$, then the corresponding signed periodic words, both $n$-periodic, would agree identically, contrary to admissibility.  Choose distinct signed origins realizing $\rhoSigned(\mathbf w)$.  Since this overlap has length less than $n$, it is also a common initial segment of the corresponding two length-$n$ members of $\calR_A(\mathbf w)$, which are distinct by Lemma~\ref{lem:symmetrized-uniqueness}.  Thus it is a piece, so
\[
 P_A(\mathbf w)\ge\rhoSigned(\mathbf w).
\]
Together with part~\textup{(1)}, this proves part~\textup{(2)}.
\end{proof}

\begin{ex}[Strict inequality in Lemma~\ref{lem:pieces-rho-signed}]\label{ex:strict-piece-rho}
Let $F_2=F(a,b)$ and
\[
 \mathbf w=(ab,aba).
\]
The tuple $\mathbf w$ is admissible.  The signed periodic words
\[
 (ab)^\infty=ababab\cdots,
 \qquad
 (aba)^\infty=abaaba\cdots
\]
have common initial segment $aba$ of length $3$, and a direct check of the finitely many signed origins shows that no longer common prefix occurs.  Thus
\[
 \rhoSigned(\mathbf w)=3.
\]
On the other hand, $ab$ is a piece of $\calR_A(\mathbf w)$, while no piece has length greater than $2$; hence
\[
 P_A(\mathbf w)=2<3=\rhoSigned(\mathbf w).
\]
The strict inequality occurs because the periodic overlap continues past the end of the shorter relator $ab$.
\end{ex}

Let $T$ be a free simplicial $F_m$-tree.  For $g\ne1$, let $A_T(g)$ be its axis.  For an admissible tuple $\mathbf g=(g_1,\dots,g_q)$ put
\[
 \Ov_T(\mathbf g)
 =\sup\diam(\ell\cap\ell'),
\]
where the supremum ranges over distinct lines
\[
 \ell=hA_T(g_i),
 \qquad
 \ell'=h'A_T(g_j),
 \qquad h,h'\in F_m.
\]
The diameter of the empty set is zero.

\begin{lem}[Pieces and intersections of axes]\label{lem:pieces-axes}
Let $\mathbf g$ be an admissible tuple of nontrivial cyclically reduced words.  Every piece $v$ of $\calR_A(\mathbf g)$ determines two distinct translates of component axes whose intersection contains a segment of length $|v|$.  Consequently,
\[
 |v|\le\Ov_{T_A}(\mathbf g).
\]
\end{lem}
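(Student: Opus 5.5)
Let $v$ be a piece of $\calR_A(\mathbf g)$. By definition there are two distinct members $r\neq r'$ of $\calR_A(\mathbf g)$ having $v$ as a common initial segment. Lemma~\ref{lem:symmetrized-uniqueness} gives each of them a unique origin triple. So $r$ is the cyclic shift starting at position $s$ of $g_i^{\epsilon}$, and $r'$ is the cyclic shift starting at position $t$ of $g_j^{\delta}$, with $(i,\epsilon,s)\neq(j,\delta,t)$. The plan is to convert each such member into an explicit translate of a component axis in $T_A$ that passes through the base vertex $1$, and then read $v$ off as the common initial subsegment of the two lines.

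First, the axis. Since $g_i$ is cyclically reduced, its axis $A_{T_A}(g_i)$ is the bi-infinite line through $1$ whose vertices are the prefixes of the periodic word $g_i^{\infty}$ and of its leftward extension. The cyclic conjugate $r=p^{-1}g_i^{\epsilon}p$, where $p$ is the appropriate prefix of $g_i^{\pm1}$, is again cyclically reduced. Its axis is $p^{-1}A_{T_A}(g_i)$, a translate of a component axis; this uses $A(g^{-1})=A(g)$ as unoriented lines. This axis passes through $1$, and following it from $1$ in the direction of translation spells out the signed periodic word $\omega_{i,\epsilon,s}$. The same construction for $r'$ gives a line $\ell'=p'^{-1}A_{T_A}(g_j)$ through $1$ that reads $\omega_{j,\delta,t}$.

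Second, the intersection. Both lines contain the geodesic segment $[1,v]$, because the first $|v|$ letters of $r$ and $r'$ coincide, and in a tree a path from $1$ is determined by its label. Hence $\ell\cap\ell'\supseteq[1,v]$, which has length $|v|$.

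Third, distinctness. This is the only real obstacle: the two lines must be different as unoriented lines. Suppose instead $\ell=\ell'$. Then the two oriented lines through $1$ either agree or are opposite. If they agree, then $\omega_{i,\epsilon,s}=\omega_{j,\delta,t}$ identically, which contradicts admissibility by Lemma~\ref{lem:rho-signed-admissible}, since that would make $\rhoSigned$ infinite. If they are opposite, we use a stabilizer argument. The line is then the axis of both $r$ and $r'$, and they translate it in opposite directions. The setwise stabilizer of an axis in a free group is cyclic, namely the maximal cyclic subgroup containing $r$. Root-freeness forces $r'=r^{-1}$. But $r$ and $r^{-1}$ start with letters that are mutually inverse, so they do not share a common nonempty prefix, which contradicts $v$ being nonempty. (Alternatively, this case again yields conjugacy of $g_i$ with $g_j^{-\delta\epsilon}$, which Lemma~\ref{lem:symmetrized-uniqueness} excludes.)

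Hence $\ell\neq\ell'$ are distinct translates of component axes with $\diam(\ell\cap\ell')\ge|v|$. Taking the supremum in the definition of $\Ov_{T_A}(\mathbf g)$ gives $|v|\le\Ov_{T_A}(\mathbf g)$.
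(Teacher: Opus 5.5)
Your proof is correct and follows essentially the paper's route: you realize $r,r'$ as axis translates through $1$ that both contain $[1,v]$, then rule out equal lines by splitting on orientation. The only difference is that you handle the same-orientation case via Lemma~\ref{lem:rho-signed-admissible}, where the paper uses the cyclic line stabilizer directly, and these amount to the same thing.

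In the opposite-orientation case, your stated reason is slightly off: $r$ and $r^{-1}$ begin with distinct letters because $r$ is cyclically reduced, not with mutually inverse letters. The stabilizer detour there is also unnecessary, since two opposite orientations of one line leave $1$ along different edges. Your parenthetical alternative fails when $i=j$ and $\delta=-\epsilon$: then $r^{-1}$ is a genuine distinct member of $\calR_A(\mathbf g)$, and no conjugacy contradiction arises.
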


\begin{proof}
Let distinct $r,r'\in\calR_A(\mathbf g)$ have common initial segment $v$.  By Lemma~\ref{lem:symmetrized-uniqueness}, they have distinct component-orientation-position triples, each determining an oriented fundamental segment on a component-axis translate.  Translating the common initial vertex to the identity makes the two copies of $v$ a common segment of these translates.

The underlying lines in $T_A$ are distinct.  Otherwise their preferred orientations agree, since opposite orientations cannot read a nonempty common word.  Their common line $F_m$-stabilizer is generated by the primitive root of either component, forcing distinct components to be conjugate up to inversion or, for one component, root-freeness to make the triples equal.
\end{proof}

\begin{lem}[Signed periodic factors are axis overlaps]\label{lem:signed-axis-overlap}
Let $\mathbf w$ be an admissible tuple of nontrivial cyclically reduced words.  Then
\[
 \Ov_{T_A}(\mathbf w)=\rhoSigned(\mathbf w).
\]
\end{lem}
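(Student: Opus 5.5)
We prove the two inequalities $\rhoSigned(\mathbf w)\le\Ov_{T_A}(\mathbf w)$ and $\Ov_{T_A}(\mathbf w)\le\rhoSigned(\mathbf w)$ separately, using the dictionary between signed origins and oriented axis translates through the identity vertex. Since $w_i$ is cyclically reduced, the axis $A_{T_A}(w_i)$ passes through $1$. Reading labels along this axis from the vertex $x_{i,0}\cdots x_{i,s-1}$, in the direction of translation, gives $\omega_{i,+,s}$. Reading in the opposite direction from $x_{i,0}\cdots x_{i,s}$ gives $\omega_{i,-,s}$. Translating that vertex to $1$, each signed origin $(i,\epsilon,s)$ corresponds to a translate $hA_{T_A}(w_i)$ through $1$ with a chosen orientation, and $\omega_{i,\epsilon,s}(r)$ is the label of the $r$-th edge of the oriented ray from $1$. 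This correspondence is surjective onto oriented translates through $1$, and every vertex of such a translate arises from exactly one position $s$ modulo $\ell_i$ with $\epsilon$ fixed by the orientation.

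For $\rhoSigned\le\Ov$: if $\rhoSigned(\mathbf w)=0$ there is nothing to prove, and $\rhoSigned(\mathbf w)<\infty$ by Lemma~\ref{lem:rho-signed-admissible}. Take distinct signed origins whose periodic words agree on $[0,R)$ with $R=\rhoSigned(\mathbf w)$. In $T_A$ the two oriented rays from $1$ then share their first $R$ edges, because a reduced path from $1$ is determined by its label. So the two corresponding translates meet in a segment of length at least $R$. It remains to show these translates are distinct lines, and I would repeat the argument of Lemma~\ref{lem:pieces-axes}. If the lines coincide, their orientations agree, since the rays share a first edge when $R\ge1$. The common line stabilizer is cyclic, generated by the root of either component. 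Irredundancy then forces $i=j$, and root-freeness forces the same position modulo $\ell_i$, so the origins are not distinct. This gives $\Ov\ge R$.

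For $\Ov\le\rhoSigned$: let $\ell\ne\ell'$ be translates of component axes. Their intersection is a segment, possibly a ray or a whole line, and it cannot be the whole line since the lines are distinct. I expect the main obstacle to be excluding an infinite intersection (a common ray). I would argue that a common ray makes the two periodic words eventually equal. Both words are periodic, so they then agree identically after shifting, and the pair corresponds to distinct origins giving $\rhoSigned=\infty$, contradicting admissibility. Here I must check that the shifted origins stay distinct, which again reduces to the lines being distinct via the stabilizer argument.

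So the intersection is a finite segment $[p,p']$ of length $D$. Translate $p$ to $1$ and orient both lines from $p$ toward $p'$; this gives two signed origins whose periodic words agree on $[0,D)$. These origins are distinct because the lines are distinct. Hence $D\le\rhoSigned(\mathbf w)$, and taking the supremum gives $\Ov\le\rhoSigned$. The edge case $D=0$, or empty intersection, is trivial.
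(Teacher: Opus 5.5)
Your proposal is correct and follows essentially the paper's proof. Both use the same dictionary between signed origins and oriented axis translates through $1$, both pass from a common prefix to an axis intersection, and both use the same compatible-orientation argument in the other direction, with a common ray excluded by periodicity and admissibility. (In that step, distinctness of the two origins needs no stabilizer argument, since an origin determines its oriented line.)

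One harmless indexing slip: since $\omega_{i,-,s}(0)=x_{i,s-1}^{-1}$, the word $\omega_{i,-,s}$ is read backwards from the same vertex $x_{i,0}\cdots x_{i,s-1}$ as $\omega_{i,+,s}$, not from $x_{i,0}\cdots x_{i,s}$.
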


\begin{proof}
For the $i$-th component and $s\in\Z/\ell_i\Z$, put
\[
 p_{i,s}=x_{i,0}\cdots x_{i,s-1},
 \qquad p_{i,0}=1.
\]
The line $p_{i,s}^{-1}A_{T_A}(w_i)$ passes through the identity.  In its positive translation direction its forward label is the one-sided restriction of $\omega_{i,+,s}$; in the opposite direction its forward label is the one-sided restriction of $\omega_{i,-,s}$.

A common signed periodic factor of length $R$ therefore gives an axis intersection of length $R$.  For $R>0$ the lines are distinct: otherwise their agreeing orientations and first common edge force the complete signed periodic words to agree, contrary to admissibility.  Hence
\[
 \Ov_{T_A}(\mathbf w)\ge\rhoSigned(\mathbf w).
\]

Conversely, orient two distinct component-axis translates compatibly on a nondegenerate intersection and translate its initial vertex to the identity.  The oriented lines determine distinct signed origins, and the intersection reads a common prefix of their periodic words.  An unbounded intersection would give a common ray and, by periodicity, identical signed periodic words, contrary to admissibility.  Thus its length is at most $\rhoSigned(\mathbf w)$.
\end{proof}

We next control line intersections under equivariant Lipschitz maps.

\begin{lem}[Distortion of line intersections]\label{lem:line-overlap}
Let $T,T'$ be free, minimal, cocompact simplicial $F_m$-trees, and let $f:T\to T'$ be an $F_m$-equivariant Lipschitz map.  Put $K=\Lip(f)$ and $B=\BBT(f)$.  Let $\ell_1,\ell_2\subseteq T$ be distinct geodesic lines, and let $\ell_1',\ell_2'\subseteq T'$ be the lines whose endpoint pairs are the images of those of $\ell_1,\ell_2$ under the boundary homeomorphism induced by $f$.  If $\diam(\ell_1\cap\ell_2)<\infty$, then
\[
 \diam(\ell_1'\cap\ell_2')
 \le2K\diam(\ell_1\cap\ell_2)+4B.
\]
\end{lem}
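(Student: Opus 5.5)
The plan is to compare each image line $\ell_i'$ with the image $f(\ell_i)$ using bounded backtracking, and then to pull a long common segment of $\ell_1'\cap\ell_2'$ back to $T$, where the Lipschitz bound gives the factor $2K$.

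\emph{Step 1 (lines versus images).} For each $i$, $f(\ell_i)$ lies in the $B$-neighborhood of $\ell_i'$. Conversely, $\ell_i'\subseteq f(\ell_i)$, since the image of a bi-infinite path whose ends converge to the two distinct boundary points $\ell_i'$ connects must cross every point of the geodesic between them. More precisely, for $x\le y$ on $\ell_i$ (in a fixed orientation), the set $f([x,y])$ contains $[f(x),f(y)]$. It also lies within $B$ of that segment by the definition of $\BBT$. Passing to the limit, $f(\ell_i)$ and $\ell_i'$ are at Hausdorff distance at most $B$. In addition, the map from $\ell_i$ to $\ell_i'$ obtained by nearest-point projection after $f$ is coarsely monotone, with error $2B$.

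\emph{Step 2 (pulling back the overlap).} Suppose $I'=\ell_1'\cap\ell_2'$ is a segment of length $L>4B$; otherwise we are done. Let $p',q'$ be its endpoints, and choose interior points $p'',q''$ at distance $2B$ from $p',q'$ respectively. By Step 1 there are $x_1,y_1\in\ell_1$ with $f(x_1)=p''$ and $f(y_1)=q''$, and similarly $x_2,y_2\in\ell_2$. Set $J_i=[x_i,y_i]\subseteq\ell_i$.

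The key claim is that the segments $J_1$ and $J_2$ must share a long common subsegment. The image $f(J_i)$ is within $B$ of $[p'',q'']\subseteq I'$. Let $c=\ell_1\cap\ell_2$, which is a compact segment of diameter $D$, possibly empty. Then $J_1$ decomposes into at most three pieces: a piece inside $c$, and at most two pieces lying on branches of $\ell_1$ that leave $\ell_2$. The same holds for $J_2$.

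Take a point $z\in J_1\setminus c$ and consider its projection $z'$ to $\ell_2$. The geodesic from $z$ to any point of $\ell_2$ passes through the endpoint of $c$ where $\ell_1$ branches off $\ell_2$. By bounded backtracking, $f(z)$ is then within $B$ of a geodesic from $f$ of that branch point to $f$ of a point $w\in J_2$ with $f(w)$ near $f(z)$. The main point is to show that this forces $f$ to fold the branch of $\ell_1$ back onto $\ell_2$. Such a fold is only possible within a window of length $O(B)$ in the image, because the two lines are distinct and have distinct endpoints in $\partial T'$.

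Concretely, consider the tripod formed by $\ell_1$, $\ell_2$ and the branch point $b$ at an end of $c$. The image of the subray of $\ell_1$ beyond $b$ converges to an endpoint of $\ell_1'$, and the image of the corresponding subray of $\ell_2$ converges to an endpoint of $\ell_2'$. If these endpoints are distinct, the two image rays fellow-travel, within $B$ of the geodesic rays from near $f(b)$, only up to the point where $\ell_1'$ and $\ell_2'$ diverge. That divergence point lies within $B$ of an endpoint of $I'$. Hence each end of $[p'',q'']$ beyond $f(c)$ can be covered only within $2B$.

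\emph{Step 3 (conclusion).} It follows that $[p'',q'']$ lies in the $B$-neighborhood of $f(c)$, together with tails of total length at most $O(B)$. Since $\diam f(c)\le KD$, we get $L-4B\le KD+(\text{const})B$. Here I expect the sign ambiguity of orientations to cost a factor of $2$ in $K$ (the overlap can be traversed by both ends of the segment), which produces the stated bound $2KD+4B$.

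The main obstacle is Step 2, the tripod or fold argument showing that image rays from different branches diverge by $B$. If that argument is too delicate to carry out directly, I would first try an alternative. Note that $f(\ell_1)\cap N_B(\ell_2')$ projects onto $I'$, and then use the Lipschitz bound along $c$ together with two applications of bounded backtracking, one at each end of $c$, to account for the additive constant $4B$.
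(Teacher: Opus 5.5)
\textbf{Verdict.} There is a genuine gap: Step 2 is only sketched, the sketch contains concrete errors, and the stated bound is never derived.

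\textbf{What goes wrong.}
\begin{enumerate}
\item Bounded backtracking is applied the wrong way round. If the branch point $b$ lies on $[z,w]$, the definition of $\BBT(f)$ bounds $d_{T'}\bigl(f(b),[f(z),f(w)]\bigr)$. It says nothing about the distance from $f(z)$ to $[f(b),f(w)]$.
\item The tripod claim silently assumes a fixed orientation. You assert that the image rays from near $f(b)$ separate within $B$ of an endpoint of $I'$. This assumes the images of the two ends of $\ell_1,\ell_2$ beyond $b$ lie beyond the \emph{same} end of $I'$. If they lie beyond opposite ends, the line joining them contains $I'$. The rays from $f(b)$ then separate at the projection of $f(b)$ to that line, which can be deep inside $I'$. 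Excluding this reversed configuration needs its own argument, for instance pairing the end of $\ell_1$ beyond one endpoint of $c$ with the end of $\ell_2$ beyond the other. ``I expect the sign ambiguity to cost a factor of $2$'' is not such an argument.
\item The bookkeeping in Step 3 cannot produce the stated constants. You trim $2B$ from each end of $I'$, then cover $[p'',q'']$ by the $B$-neighborhood of $f(c)$ plus tails of size $O(B)$. This gives at best $L\le KD+C_0B$ with $C_0>4$, which does not imply $L\le 2KD+4B$ when $KD$ is small (e.g.\ $D=0$).
\item The case $c=\emptyset$, where there is no branch point, is not treated.
\end{enumerate}

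\textbf{The missing idea is already in your Step 1.} Since $\ell_i'\subseteq f(\ell_i)$, each $y'\in I'$ has preimages $z_1\in\ell_1$ and $z_2\in\ell_2$ with $f(z_1)=f(z_2)=y'$.
\begin{itemize}
\item If $z_1\in c$, then $y'\in f(c)$.
\item If $z_1\notin c$, let $b$ be the endpoint of $c$ where the ray of $\ell_1$ containing $z_1$ leaves $\ell_2$; if $c=\emptyset$, take $b$ to be the bridge endpoint on $\ell_1$. Then $b$ is the projection of $z_1$ to $\ell_2$, so $b\in[z_1,z_2]$. Since $[f(z_1),f(z_2)]=\{y'\}$, bounded backtracking gives $d_{T'}(f(b),y')\le B$.
\end{itemize}
Hence $I'$ lies in the closed $B$-neighborhood of $f(c)$ (of a single point if $c=\emptyset$). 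Therefore $\diam(I')\le KD+2B\le 2KD+4B$, with no trimming and no orientation cases.

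\textbf{How the paper argues instead.} It works with Gromov products.
\begin{itemize}
\item Bounded backtracking gives $(\xi'\mid\eta')_{f(x)}\le K(\xi\mid\eta)_x+B$.
\item Choosing $x\in c$ (or the bridge endpoint) makes $(\xi\mid\eta)_x\le D$ for \emph{every} end $\xi$ of $\ell_1$ and $\eta$ of $\ell_2$. This handles both orientations at once.
\item The projection of $f(x)$ to $\ell_1'$ lies within $B$ of $f(x)$. From that point, suitably chosen ends of $\ell_1'$ and $\ell_2'$ have Gromov product at least half the length of $I'$.
\end{itemize}
The factor $2$ in the lemma comes from this midpoint estimate, not from orientation reversal.
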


\begin{proof}
For distinct $\xi,\eta\in\partial T$ and $x\in T$, let $z$ be the projection of $x$ to the geodesic line with endpoints $\xi,\eta$.  Thus
\[
 d_T(x,z)=(\xi\mid\eta)_x.
\]
Choose points $p_n,q_n$ on the two rays of this line based at $z$ and tending to $\xi,\eta$, respectively.  Since $z\in[p_n,q_n]$, bounded backtracking gives
\[
 d_{T'}\bigl(f(z),[f(p_n),f(q_n)]\bigr)\le B.
\]
The segments $[f(p_n),f(q_n)]$ converge on compact sets to the line with endpoints $\xi',\eta'$.  Hence that line comes within distance $B$ of $f(z)$, while
\[
 d_{T'}(f(x),f(z))\le Kd_T(x,z).
\]
Since a Gromov product in a tree is the distance from the basepoint to the line joining the two boundary points, we obtain
\begin{equation}\label{eq:ray-product}
 (\xi'\mid\eta')_{f(x)}\le K(\xi\mid\eta)_x+B.
\end{equation}

Put $r=\diam(\ell_1\cap\ell_2)$.  If the lines meet, choose $x\in\ell_1\cap\ell_2$; otherwise choose on $\ell_1$ the endpoint of their bridge.  For every endpoint $\xi$ of $\ell_1$ and $\eta$ of $\ell_2$,
\[
 (\xi\mid\eta)_x\le r.
\]
Also $d_{T'}(f(x),\ell_1')\le B$.  Since $r<\infty$, both pairs of lines have disjoint endpoint sets.  Let $J=\ell_1'\cap\ell_2'$ and project $f(x)$ to $x'\in\ell_1'$.  If $|J|=p>0$, suitable endpoints $\xi'$ of $\ell_1'$ and $\eta'$ of $\ell_2'$ satisfy $(\xi'\mid\eta')_{x'}\ge p/2$.  Changing basepoint to $f(x)$ and using \eqref{eq:ray-product} gives
\[
 p/2-B\le(\xi'\mid\eta')_{f(x)}\le Kr+B.
\]
Thus $p\le2Kr+4B$.
\end{proof}

\begin{prop}[Uniform deterministic piece bound]\label{prop:deterministic-piece}
Let $\mathbf w$ be an admissible tuple of nontrivial cyclically reduced words.  For every $\phi\in\Out(F_m)$ and every piece $v$ of $\calR_A(\phi(\mathbf w))$,
\begin{equation}\label{eq:det-piece}
 |v|\le2\Lambda_A(\phi)\bigl(\rhoSigned(\mathbf w)+2m\bigr).
\end{equation}
The estimate applies to self-pieces and to pieces shared by different relators.
\end{prop}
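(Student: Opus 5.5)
The plan is to transport the axis-overlap picture for $\mathbf w$ in $T_A$ to the tree $T_A\phi$ by an optimal equivariant map, and then read pieces of $\calR_A(\phi(\mathbf w))$ as axis overlaps in $T_A\phi$. Fix a representative $\Phi$ of $\phi$ and let $f:T_A\to T_A\phi$ be an equivariant optimal map with $\Lip(f)=\Lambda_A(\phi)$, as provided by Francaviglia--Martino. By Proposition~\ref{prop:BBT-bound}, since $\vol(T_A/F_m)=m$, we get $\BBT(f)\le m\Lambda_A(\phi)$.

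First I reduce everything to axes. The tuple $\phi(\mathbf w)$ is admissible, because admissibility is $\Aut$-invariant. Let $v$ be a piece of $\calR_A(\phi(\mathbf w))$. By Lemma~\ref{lem:pieces-axes}, applied to the cyclically reduced forms $u_i$ of $\Phi(w_i)$, the segment $v$ lies in the intersection of two distinct lines $hA_{T_A}(u_i)$ and $h'A_{T_A}(u_j)$, so $|v|\le\Ov_{T_A}(\phi(\mathbf w))$. Now $T_A\phi$ is the same metric tree with the action twisted by $\Phi$, so the axis of $g$ in $T_A\phi$ is the $T_A$-axis of $\Phi(g)$. Since $u_i$ is conjugate to $\Phi(w_i)$, these translates of axes of $u_i$ in $T_A$ are exactly translates of axes of $w_i$ for the twisted action. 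Hence
\[
 \Ov_{T_A}(\phi(\mathbf w))=\Ov_{T_A\phi}(\mathbf w).
\]

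Next I transport overlaps through $f$. An equivariant quasi-isometry sends the endpoints of $A_{T_A}(g)$ to the endpoints of $A_{T_A\phi}(g)$, and it respects translates. Distinct translates stay distinct because line stabilizers correspond. So every pair of distinct lines $hA_{T_A\phi}(w_i)$, $h'A_{T_A\phi}(w_j)$ comes, as in Lemma~\ref{lem:line-overlap}, from the pair $hA_{T_A}(w_i)$, $h'A_{T_A}(w_j)$. By Lemma~\ref{lem:signed-axis-overlap} and admissibility (Lemma~\ref{lem:rho-signed-admissible}), these source lines have finite intersection, of diameter at most $\rhoSigned(\mathbf w)$. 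Lemma~\ref{lem:line-overlap} then gives
\[
 \diam\le 2\Lambda_A(\phi)\rhoSigned(\mathbf w)+4m\Lambda_A(\phi)=2\Lambda_A(\phi)\bigl(\rhoSigned(\mathbf w)+2m\bigr).
\]
Taking the supremum and combining with the first step yields \eqref{eq:det-piece}. Self-pieces are covered because the case $i=j$ is allowed in Lemma~\ref{lem:pieces-axes} and in $\Ov$.

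The delicate step is the identification in the first step of the axes of the cyclic reductions $u_i$ with axes of $w_i$ in $T_A\phi$, together with the check that "distinct lines" is preserved. Distinctness follows from the stabilizer argument in Lemma~\ref{lem:pieces-axes}, which uses root-freeness and irredundancy.
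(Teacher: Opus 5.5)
Your proposal is correct and follows the paper's proof essentially step for step. It uses Lemma~\ref{lem:pieces-axes} to turn the piece into an overlap of distinct axis translates, the pullback realization of $T_A\phi$ to see these as endpoint images of source-axis translates under an optimal map, Lemma~\ref{lem:signed-axis-overlap} to bound the source overlap by $\rhoSigned(\mathbf w)$, and Lemma~\ref{lem:line-overlap} with $K=\Lambda_A(\phi)$ and $B\le m\Lambda_A(\phi)$. Your explicit identification $\Ov_{T_A}(\phi(\mathbf w))=\Ov_{T_A\phi}(\mathbf w)$ spells out what the paper compresses into one sentence; distinctness of the source lines also follows more directly from the bijectivity of the boundary homeomorphism induced by $f$ than from your stabilizer argument.
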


\begin{proof}
Choose an automorphism $\Phi$ representing $\phi$.  By Lemma~\ref{lem:pieces-axes}, a piece $v$ of $\calR_A(\phi(\mathbf w))$ gives distinct translates of axes of the $\Phi(w_i)$ in $T_A$.  In the pullback realization of $T_A\phi$, these are the endpoint images, under an optimal map $T_A\to T_A\phi$, of the corresponding source-axis translates.  Their source intersection is at most $\rhoSigned(\mathbf w)$ by Lemma~\ref{lem:signed-axis-overlap}.  Applying Lemma~\ref{lem:line-overlap} with
\[
 K=\Lambda_A(\phi),
 \qquad
 B\le m\Lambda_A(\phi)
\]
from \eqref{eq:BBT-standard} in Proposition~\ref{prop:BBT-bound} yields
\[
|v|\le \Ov_{T_A}(\phi(\mathbf w)) \le 2K\rhoSigned(\mathbf w)+4B\le 2\Lambda_A(\phi)\rhoSigned(\mathbf w)+4m\Lambda_A(\phi)=2\Lambda_A(\phi)\bigl(\rhoSigned(\mathbf w)+2m\bigr).
\]
\end{proof}

\begin{rem}
The estimate is deterministic; positivity in the earlier positive-word argument only replaced signed periodic overlaps by unsigned ones.
\end{rem}

\section{Filling currents and abstract stability criteria}\label{sec:abstract-stability}

\subsection{Finite-frequency stability}

Recall from Definition~\ref{def:LambdaA} that
\[
 \calK_A=\{T\in\cvbar_m:\Lambda_A(T)=1\},
\]
and that $\calK_A$ is compact by Proposition~\ref{prop:outer-space-facts}.

Put
\[
 \|\eta\|_A:=\langle T_A,\eta\rangle
 \qquad(\eta\in\Curr(F_m))
\]
and
\[
 \mathcal S_A=\{\eta\in\Curr(F_m):\|\eta\|_A=1\}.
\]
Here and below, $\mathcal S_A$ carries the subspace topology inherited from the weak-$\ast$ topology on $\Curr(F_m)$.
Since
\[
 \|\eta\|_A=\sum_{a\in A}\langle a,\eta\rangle_A>0
\]
for every nonzero current, each projective current has a unique representative in $\mathcal S_A$.

\begin{lem}[Uniform convergence on the compact slice]\label{lem:uniform-slice}
Let $\mu_j,\mu\in\Curr(F_m)$, and suppose that $\mu_j\to\mu$.  Then:
\begin{enumerate}[(1)]
\item
\[
 \sup_{T\in\calK_A}
 |\langle T,\mu_j\rangle-\langle T,\mu\rangle|
 \longrightarrow0
 \qquad\text{as }j\to\infty.
\]
\item The function
\[
 \mathfrak m_A(\eta):=\min_{T\in\calK_A}\langle T,\eta\rangle
\]
is continuous on $\Curr(F_m)$.
\end{enumerate}
\end{lem}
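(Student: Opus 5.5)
Part~(1) is the standard upgrade of pointwise convergence to uniform convergence on a compact set, using joint continuity of the intersection form. I will argue by contradiction. Suppose the supremum does not tend to zero. Then there are $\delta>0$, a subsequence (still indexed by $j$), and trees $T_j\in\calK_A$ with
\[
 |\langle T_j,\mu_j\rangle-\langle T_j,\mu\rangle|\ge\delta .
\]
Since $\calK_A$ is compact by Proposition~\ref{prop:outer-space-facts}, after passing to a further subsequence we may assume $T_j\to T\in\calK_A$. The intersection form $\cvbar_m\times\Curr(F_m)\to\R_{\ge0}$ is jointly continuous by Proposition~\ref{prop:current-facts}(1). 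Hence $(T_j,\mu_j)\to(T,\mu)$ gives $\langle T_j,\mu_j\rangle\to\langle T,\mu\rangle$, and $(T_j,\mu)\to(T,\mu)$ gives $\langle T_j,\mu\rangle\to\langle T,\mu\rangle$. Subtracting the two limits contradicts the lower bound $\delta$.

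One point needs care here. $\Curr(F_m)$ with the weak-$\ast$ topology is metrizable: the cylinder coordinates give countably many continuous functionals that determine the topology. The space $\calK_A\subseteq\cvbar_m$ is compact metrizable, or at least sequentially compact, in the length-function topology. So sequences suffice for the contradiction argument. If one prefers to avoid metrizability, the same argument can be run with nets, since compactness gives convergent subnets and joint continuity works equally well with nets.

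For part~(2), the minimum defining $\mathfrak m_A(\eta)$ is attained, because $T\mapsto\langle T,\eta\rangle$ is continuous on the compact set $\calK_A$. For two currents $\eta,\eta'$ we have the elementary estimate
\[
 |\mathfrak m_A(\eta)-\mathfrak m_A(\eta')|\le\sup_{T\in\calK_A}|\langle T,\eta\rangle-\langle T,\eta'\rangle| .
\]
To see this, evaluate at a minimizer for $\eta'$ to get $\mathfrak m_A(\eta)\le\mathfrak m_A(\eta')+\sup$, and argue symmetrically. Now take $\eta_j\to\eta$. Part~(1) says the right-hand side tends to zero, so $\mathfrak m_A(\eta_j)\to\mathfrak m_A(\eta)$. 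Sequential continuity suffices because $\Curr(F_m)$ is metrizable; alternatively, rerun the argument with nets.

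The only step that is not a routine manipulation is the joint continuity of the intersection form on $\cvbar_m\times\Curr(F_m)$. Here it is simply the cited Kapovich--Lustig theorem, recorded in Proposition~\ref{prop:current-facts}(1), so I expect no real obstacle.
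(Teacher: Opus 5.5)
Your proposal is correct and follows essentially the same route as the paper: a compactness-plus-joint-continuity contradiction argument for part~(1), then the elementary estimate $|\mathfrak m_A(\eta)-\mathfrak m_A(\eta')|\le\sup_{T\in\calK_A}|\langle T,\eta\rangle-\langle T,\eta'\rangle|$ combined with part~(1) for part~(2). Your remark on metrizability (or on running the argument with nets) makes explicit a point the paper leaves implicit when it works only with sequences.
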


\begin{proof}
If (1) fails then there are $\epsilon>0$, a subsequence $j_k\to\infty$, and trees $T_k\in\calK_A$ such that
\[
 |\langle T_k,\mu_{j_k}\rangle-\langle T_k,\mu\rangle|
 \ge\epsilon
\]
for every $k\ge 1$.  After passing to a further subsequence, compactness of $\calK_A$ gives $\displaystyle \lim_{k\to\infty}T_k= T$ for some $T\in\calK_A$.  Since $\mu_{j_k}\to\mu$, joint continuity of the intersection form gives
\[
 \lim_{k\to\infty} |\langle T_k,\mu_{j_k}\rangle-\langle T_k,\mu\rangle|=|\langle T,\mu\rangle-\langle T,\mu\rangle|=0
\]
contradicting the preceding lower bound.  This proves part~\textup{(1)}.

For part~\textup{(2)}, the estimate
\[
 |\mathfrak m_A(\mu_j)-\mathfrak m_A(\mu)|
 \le\sup_{T\in\calK_A}
 |\langle T,\mu_j\rangle-\langle T,\mu\rangle|
\]
and part~\textup{(1)} give the continuity of $\mathfrak m_A$.
\end{proof}

\begin{lem}[Uniform filling neighborhood]\label{lem:uniform-filling-neighborhood}
Let $\nu\in\mathcal S_A$ be filling.  There are a neighborhood $U$ of $\nu$ in $\mathcal S_A$ and $c>0$ such that
\begin{enumerate}
\item We have
\begin{equation}\label{eq:uniform-filling-neighborhood}
\langle T,\eta\rangle\ge c\Lambda_A(T)
\end{equation}
for every $\eta\in U$ and every nontrivial $T\in\cvbar_m$. 

\item For every $\phi\in \Out(F_m)$ we have
\begin{equation}\label{eq:uniform-auto-distortion}
\langle T_A\phi,\eta\rangle=\langle T_A,\phi\eta\rangle\ge c\Lambda_A(\phi)
\end{equation}
\end{enumerate}
\end{lem}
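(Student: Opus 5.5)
The plan is to use compactness of the normalized slice $\calK_A$ together with continuity of $\mathfrak m_A$ from Lemma~\ref{lem:uniform-slice}(2), and then homogeneity to pass from $\calK_A$ to all nontrivial trees.

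First, since $\nu$ is filling, $\langle T,\nu\rangle>0$ for every nontrivial $T\in\cvbar_m$, in particular for every $T\in\calK_A$. By Proposition~\ref{prop:outer-space-facts}, $\calK_A$ is compact, and $T\mapsto\langle T,\nu\rangle$ is continuous by Proposition~\ref{prop:current-facts}(1). Hence the minimum is attained and
\[
 \mathfrak m_A(\nu)=\min_{T\in\calK_A}\langle T,\nu\rangle>0 .
\]
Put $c=\mathfrak m_A(\nu)/2$. By Lemma~\ref{lem:uniform-slice}(2), $\mathfrak m_A$ is continuous on $\Curr(F_m)$. Therefore $U=\{\eta\in\mathcal S_A:\mathfrak m_A(\eta)>c\}$ is an open neighborhood of $\nu$ in $\mathcal S_A$.

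For part (1), let $T\in\cvbar_m$ be nontrivial. By Proposition~\ref{prop:outer-space-facts}(2), $\Lambda_A(T)>0$, and $T/\Lambda_A(T)\in\calK_A$ by homogeneity of $\Lambda_A$. Homogeneity of the intersection form in the tree coordinate (Proposition~\ref{prop:current-facts}(1)) then gives, for $\eta\in U$,
\[
 \langle T,\eta\rangle=\Lambda_A(T)\,\bigl\langle T/\Lambda_A(T),\eta\bigr\rangle\ge\Lambda_A(T)\,\mathfrak m_A(\eta)\ge c\Lambda_A(T).
\]

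For part (2), apply part (1) to the nontrivial tree $T=T_A\phi\in\cv_m$. Recall $\Lambda_A(\phi):=\Lambda_A(T_A\phi)$ by Definition~\ref{def:LambdaA}. The equality $\langle T_A\phi,\eta\rangle=\langle T_A,\phi\eta\rangle$ is Proposition~\ref{prop:current-facts}(2), or equivalently \eqref{eq:intersection-equivariance}.

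The only delicate point is the continuity of $\mathfrak m_A$, which needs joint continuity of the intersection form to give uniformity over the compact slice. This is exactly what Lemma~\ref{lem:uniform-slice} provides, so I do not expect a genuine obstacle. One also has to note that homogeneity of the intersection form holds on all of $\cvbar_m$, including non-simplicial limit trees; this is part of the standard facts recorded in Proposition~\ref{prop:current-facts}(1).
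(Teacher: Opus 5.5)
Your proposal is correct and follows essentially the same route as the paper. Both arguments use positivity of $\mathfrak m_A(\nu)$ on the compact slice $\calK_A$ and its continuity from Lemma~\ref{lem:uniform-slice} to get $U$ and $c$. Both then rescale an arbitrary nontrivial $T$ to $T/\Lambda_A(T)\in\calK_A$, and obtain part~(2) by applying part~(1) to $T=T_A\phi$ together with the equivariance \eqref{eq:intersection-equivariance}.
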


\begin{proof}
Since $\nu$ is filling and $\calK_A$ is compact, $\mathfrak m_A(\nu)>0$.  Choose $0<c<\mathfrak m_A(\nu)$ and a neighborhood $U$ on which $\mathfrak m_A(\eta)>c$.  For $T\ne0$, the tree $T/\Lambda_A(T)$ lies in $\calK_A$, and homogeneity gives \eqref{eq:uniform-filling-neighborhood}, so that part (1) holds.

For part (2), for any $\phi\in \Out(F_m)$, by part (1) we have
\[
\langle T_A,\phi\eta\rangle=\langle T_A\phi,\eta\rangle\ge c\Lambda_A(T_A\phi)=c\Lambda_A(\phi),
\]
as required.
\end{proof}

\begin{prop}[Finite-frequency deterministic estimate]\label{prop:finite-frequency-estimate}
Fix $q\ge 1$ and filling currents
\[
 \nu_1,\dots,\nu_q\in\mathcal S_A.
\]
There exist an integer $M\ge 1$ and constants $0<\epsilon<1$ and $0<c\le1$ with the following property.  Let
\[
 \mathbf W=(W_1,\dots,W_q)
\]
be a tuple of nontrivial cyclically reduced words, and put
\[
 \ell_i=\|W_i\|_A=|W_i|_A,
 \qquad
 \ell_*=\min_{1\le i\le q}\ell_i.
\]
Suppose that, for every $1\le i\le q$ and every nontrivial freely reduced word $v$ with $|v|_A\le M$,
\begin{equation}\label{eq:finite-frequency-hypothesis}
 \left|
 \frac{\langle v,W_i\rangle_A}{\ell_i}
 -\langle v,\nu_i\rangle_A
 \right|<\epsilon.
\end{equation}
Then, for every $1\le i\le q$ and every $\phi\in\Out(F_m)$,
\begin{equation}\label{eq:finite-frequency-denominator}
 \|\phi(W_i)\|_A\ge c\ell_i\Lambda_A(\phi).
\end{equation}
If, in addition, $\mathbf W$ is admissible, then
\begin{equation}\label{eq:finite-frequency-Delta}
 \Delta_A(\mathbf W)
 \le
 \frac{2\bigl(\rhoSigned(\mathbf W)+2m\bigr)}{c\ell_*}.
\end{equation}
\end{prop}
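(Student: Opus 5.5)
The plan is to apply Lemma~\ref{lem:uniform-filling-neighborhood} to each filling current $\nu_i$. This gives, for each $i$, a weak-$\ast$ neighborhood $U_i$ of $\nu_i$ in $\mathcal S_A$ and a constant $c_i>0$ such that
\[
 \langle T_A\phi,\eta\rangle\ge c_i\Lambda_A(\phi)
 \qquad(\eta\in U_i,\ \phi\in\Out(F_m)).
\]
Put $c=\min\{1,c_1,\dots,c_q\}$. Since cylinder coordinates give a neighborhood basis, I will shrink each $U_i$ to a basic neighborhood in $\mathcal S_A$ of the form
\[
 \{\eta\in\mathcal S_A:\ |\langle v,\eta\rangle_A-\langle v,\nu_i\rangle_A|<\epsilon\ \text{ for all } |v|_A\le M\}.
\]
The point is that basic neighborhoods in $\Curr(F_m)$ involve only finitely many words $v$, each of bounded length, so one pair $(M,\epsilon)$ serves all $i$; I then also take $\epsilon<1$. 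Finiteness of the set of words of length at most $M$ is what makes a uniform $\epsilon$ possible.

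Next I apply this to the normalized counting currents $\eta_i=\eta_{W_i}/\ell_i$. First check that $\eta_i\in\mathcal S_A$:
\[
 \|\eta_{W_i}\|_A=\langle T_A,\eta_{W_i}\rangle=\|W_i\|_A=\ell_i .
\]
Its cylinder coordinates are $\langle v,W_i\rangle_A/\ell_i$, so hypothesis \eqref{eq:finite-frequency-hypothesis} says exactly that $\eta_i\in U_i$. Then \eqref{eq:uniform-auto-distortion}, together with $\langle T_A,\phi\eta_{W_i}\rangle=\|\phi(W_i)\|_A$ and linearity in the current, gives
\[
 \|\phi(W_i)\|_A\ge c\,\ell_i\,\Lambda_A(\phi).
\]
This is \eqref{eq:finite-frequency-denominator}.

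For \eqref{eq:finite-frequency-Delta}, assume $\mathbf W$ is admissible and fix $\phi\in\Out(F_m)$. Let $v$ be a piece of $\calR_A(\phi(\mathbf W))$ that is an initial segment of some $r$ in this set. Then $r$ is a cyclic shift of the cyclic reduction of some $\Phi(W_i)^{\pm1}$, so
\[
 |r|_A=\|\phi(W_i)\|_A\ge c\,\ell_*\,\Lambda_A(\phi).
\]
Proposition~\ref{prop:deterministic-piece} gives
\[
 |v|\le 2\Lambda_A(\phi)\bigl(\rhoSigned(\mathbf W)+2m\bigr).
\]
Dividing the two bounds, $\Lambda_A(\phi)$ cancels, leaving a bound on $|v|/|r|_A$ that is independent of $\phi$. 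Taking the supremum over $\phi$ and all pieces yields \eqref{eq:finite-frequency-Delta}; if there are no pieces the bound holds trivially.

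The main obstacle is the uniform constant $c$: it has to be independent of $\phi$, even though $\Out(F_m)$ is noncompact. This is handled entirely by compactness of $\calK_A$ inside Lemma~\ref{lem:uniform-filling-neighborhood}, so the remaining work is only the bookkeeping above, together with identifying $|r|_A$ with the translation length of $\phi(W_i)$.
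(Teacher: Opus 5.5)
Your proposal is correct and follows the paper's proof essentially step for step. You take $U_i$ and $c_i$ from Lemma~\ref{lem:uniform-filling-neighborhood}, refine to a cylinder neighborhood using all words of length at most $M$ with a single $\epsilon<1$, set $c=\min\{1,c_1,\dots,c_q\}$, and place $\eta_{W_i}/\ell_i$ in $U_i$ to get \eqref{eq:finite-frequency-denominator}; you then divide the bound of Proposition~\ref{prop:deterministic-piece} by it, using $\ell_i\ge\ell_*$.
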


\begin{proof}
For each $i$, choose a neighborhood $U_i\subseteq\mathcal S_A$ of $\nu_i$ and $c_i>0$ as in Lemma~\ref{lem:uniform-filling-neighborhood}.  Using the cylinder-coordinate neighborhood basis from Subsection~\ref{subsec:currents-prelim}, choose a finite nonempty set $\mathcal V_i$ of nontrivial freely reduced words and positive numbers $\epsilon_{i,v}$, $v\in\mathcal V_i$, such that
\[
 \left\{\eta\in\mathcal S_A:
 |\langle v,\eta\rangle_A-\langle v,\nu_i\rangle_A|
 <\epsilon_{i,v}
 \text{ for every }v\in\mathcal V_i\right\}
 \subseteq U_i.
\]
Choose $M\ge 1$ at least the length of every word in the finitely many sets $\mathcal V_i$, and choose $0<\epsilon<1$ no larger than every $\epsilon_{i,v}$.  Then, for every $i$, the inequalities
\[
 |\langle v,\eta\rangle_A-\langle v,\nu_i\rangle_A|<\epsilon
 \qquad(1\le |v|_A\le M)
\]
for $\eta\in\mathcal S_A$ imply $\eta\in U_i$.  Put
\[
 c=\min\{1,c_1,\dots,c_q\}.
\]

For $1\le i\le q$, let
\[
 \widehat\eta_i=\frac{1}{\ell_i}\eta_{W_i}.
\]
Since $W_i$ is nontrivial and cyclically reduced, one has $\widehat\eta_i\in\mathcal S_A$, and
\[
 \langle v,\widehat\eta_i\rangle_A
 =\frac{\langle v,W_i\rangle_A}{\ell_i}.
\]
Thus \eqref{eq:finite-frequency-hypothesis} gives $\widehat\eta_i\in U_i$.  For every $\phi\in\Out(F_m)$, equivariance and part~\textup{(2)} of Lemma~\ref{lem:uniform-filling-neighborhood} now give
\[
 \frac{1}{\ell_i}\|\phi(W_i)\|_A
 =\langle T_A\phi,\widehat\eta_i\rangle
 \ge c_i\Lambda_A(\phi)
 \ge c\Lambda_A(\phi),
\]
which proves \eqref{eq:finite-frequency-denominator}.

Suppose now that $\mathbf W$ is admissible.  If a piece $u$ of $\calR_A(\phi(\mathbf W))$ begins a relator arising from component $i$, Proposition~\ref{prop:deterministic-piece} and \eqref{eq:finite-frequency-denominator} give
\[
 \frac{|u|}{\|\phi(W_i)\|_A}
 \le
 \frac{2\Lambda_A(\phi)\bigl(\rhoSigned(\mathbf W)+2m\bigr)}
 {c\ell_i\Lambda_A(\phi)}
 \le\frac{2\bigl(\rhoSigned(\mathbf W)+2m\bigr)}{c\ell_*}.
\]
Taking the supremum over all pieces and all $\phi$ proves \eqref{eq:finite-frequency-Delta}.
\end{proof}

\phantomsection\label{proof:finite-frequency-stability}
\begin{mainthmrestated}{thm:finite-frequency-stability}{Finite-frequency stability criterion}
Fix $q\ge 1$, filling currents
\[
 \nu_1,\dots,\nu_q\in\Curr(F_m)
 \qquad\text{with}\qquad
 \langle T_A,\nu_i\rangle=1
\]
for $1\le i\le q$, and $0<\lambda<1$.  Then there exist constants $0<\epsilon<1$ and $0<\lambda'<\lambda$, and integers $M,n_0\ge 1$, such that the following holds.  Let
\[
 \mathbf W=(W_1,\dots,W_q)
\]
be a root-free irredundant tuple of nontrivial cyclically reduced words, not necessarily of equal lengths, and put
\[
 \ell_i=\|W_i\|_A=|W_i|_A
 \qquad(1\le i\le q).
\]
Suppose that $\ell_i\ge n_0$ for every $i$, that the symmetrization of $\mathbf W$ satisfies $C'(\lambda')$, and that, for every $1\le i\le q$ and every nontrivial freely reduced word $v$ with $|v|_A\le M$,
\[
 \left|
 \frac{\langle v,W_i\rangle_A}{\ell_i}
 -\langle v,\nu_i\rangle_A
 \right|<\epsilon.
\]
Then
\[
 \Delta_A(\mathbf W)<\lambda.
\]
In particular, $\mathbf W$ is $\lambda$-stable.
\end{mainthmrestated}

\begin{proof}
Apply Proposition~\ref{prop:finite-frequency-estimate} to $\nu_1,\dots,\nu_q$, obtaining $M$, $\epsilon$, and $0<c\le1$.  Choose
\[
 0<\lambda'<\frac{c\lambda}{4}
\]
and then choose $n_0\ge M$ so that
\[
 \frac{4m}{cn_0}<\frac{\lambda}{2}.
\]
Since $c\le1$, one has $\lambda'<\lambda$.

Let $\mathbf W$ satisfy the hypotheses.  Proposition~\ref{prop:finite-frequency-estimate} gives
\begin{equation}\label{eq:finite-frequency-variable-denominator}
 \|\phi(W_i)\|_A\ge c\ell_i\Lambda_A(\phi)
 \qquad(1\le i\le q,\ \phi\in\Out(F_m)).
\end{equation}
Fix $\phi\in\Out(F_m)$, and let $u$ be a piece of $\calR_A(\phi(\mathbf W))$ beginning a relator arising from component $i$.  Choose an automorphism $\Phi$ representing $\phi$.  As in the proof of Proposition~\ref{prop:deterministic-piece}, Lemma~\ref{lem:pieces-axes}, applied to cyclically reduced representatives of the $\Phi(W_k)$, associates to $u$ two distinct target-axis translates whose intersection contains a segment of length $|u|$.  Let $f:T_A\to T_A\phi$ be an optimal equivariant map.  In the pullback realization of $T_A\phi$, these target lines are the endpoint images under $f$ of distinct source-axis translates $L_1,L_2\subseteq T_A$, where $L_1$ belongs to component $i$ and $L_2$ belongs to a component $j$.  Put
\[
 d=\diam(L_1\cap L_2).
\]
This number is finite by Lemma~\ref{lem:signed-axis-overlap}.

We claim that
\begin{equation}\label{eq:Cprime-componentwise-source-overlap}
 d<\lambda'\ell_i.
\end{equation}
There is nothing to prove if $d=0$.  If $d>0$, orient the source lines compatibly along their intersection and translate an initial vertex of the intersection to the identity.  As in the proof of Lemma~\ref{lem:signed-axis-overlap}, the intersection label is then a common prefix of signed periodic words arising from $W_i$ and $W_j$, with distinct signed origins.  If
\[
 d\ge\min\{\ell_i,\ell_j\},
\]
and $\ell_i\ne\ell_j$, the full shorter cyclic word is a piece of $\calR_A(\mathbf W)$, contradicting $C'(\lambda')$ because $\lambda'<1$.  If $\ell_i=\ell_j$, the two corresponding symmetrized relators agree in all $\ell_i$ positions, contradicting Lemma~\ref{lem:symmetrized-uniqueness}.  Hence $d<\min\{\ell_i,\ell_j\}$.  The corresponding members of $\calR_A(\mathbf W)$ are distinct, and the full intersection label is therefore a piece beginning a cyclic conjugate of $W_i^{\pm1}$.  The $C'(\lambda')$ hypothesis gives \eqref{eq:Cprime-componentwise-source-overlap}.

Apply Lemma~\ref{lem:line-overlap} to the optimal map $T_A\to T_A\phi$.  Using \eqref{eq:BBT-standard} and \eqref{eq:Cprime-componentwise-source-overlap}, we obtain
\[
 |u|
 \le2\Lambda_A(\phi)d+4m\Lambda_A(\phi)
 <2\Lambda_A(\phi)(\lambda'\ell_i+2m).
\]
Together with \eqref{eq:finite-frequency-variable-denominator}, this gives
\[
 \frac{|u|}{\|\phi(W_i)\|_A}
 <\frac{2\lambda'}{c}+\frac{4m}{c\ell_i}
 \le\frac{2\lambda'}{c}+\frac{4m}{cn_0}
 <\lambda.
\]
Taking the supremum over all pieces and all $\phi$ proves $\Delta_A(\mathbf W)<\lambda$.  The implication \eqref{eq:Delta-implies-stability} completes the proof.
\end{proof}

\begin{rem}\label{rem:finite-frequency-effectivity}
Theorem~\ref{thm:finite-frequency-stability} is a finite-data sufficient condition, although the proof does not assert that suitable $M$, $\epsilon$, $\lambda'$, and $n_0$ can be computed effectively from an arbitrary description of the filling currents $\nu_i$.
\end{rem}

\subsection{Probabilistic consequences}

\begin{defn}[Exponential concentration]\label{def:exponential-concentration}
Let $\mu_n$ be random currents taking values in $\mathcal S_A$, and let $\nu\in\mathcal S_A$.  We say that $\mu_n$ \emph{concentrates exponentially} at $\nu$ if, for every neighborhood $U$ of $\nu$ in $\mathcal S_A$, there are constants $C_U,c_U>0$ such that
\[
 \Pr(\mu_n\notin U)\le C_Ue^{-c_Un}
\]
for every $n\ge 1$.
\end{defn}

\begin{rem}[Why fillingness is essential]\label{rem:filling-essential}
The filling hypothesis in Proposition~\ref{prop:finite-frequency-estimate} and Theorem~\ref{thm:finite-frequency-stability} is essential.  Let $m\ge3$, let $W_n$ be a uniform positive word of length $n$ in $F(a_2,\dots,a_m)$, and put
\[
 U_n=a_1W_n.
\]
Let $\nu_0\in\mathcal S_A$ be the current characterized by
\[
 \langle v,\nu_0\rangle_A
 =\begin{cases}
 (m-1)^{-|v|_A},
 &v\text{ or }v^{-1}\text{ is a positive word over }\{a_2,\dots,a_m\},\\
 0,&\text{otherwise}.
 \end{cases}
\]
The normalized counting currents $(n+1)^{-1}\eta_{U_n}$ concentrate exponentially at $\nu_0$ in the sense of Definition~\ref{def:exponential-concentration}: the Bernoulli frequencies in $W_n$ have exponentially decaying deviation probabilities on every fixed finite family of cylinder coordinates, while the single letter $a_1$ affects each fixed normalized coordinate by only $O(1/n)$.  The current $\nu_0$ is not filling.  Indeed, it has zero intersection with the Bass--Serre tree of
\[
 F_m=\langle a_1\rangle * F(a_2,\dots,a_m),
\]
because $F(a_2,\dots,a_m)$ is elliptic there.

The denominator conclusion fails accordingly.  Each $U_n$ is primitive, since $a_1\mapsto a_1W_n$, $a_i\mapsto a_i$ for $i\ge 2$, defines an automorphism.  Hence some $\phi_n\in\Out(F_m)$ satisfies
\[
 \|\phi_n(U_n)\|_A=1.
\]
Because $\Lambda_A(\phi_n)\ge 1$, no $c>0$ can satisfy
\[
 \|\phi(U_n)\|_A\ge c(n+1)\Lambda_A(\phi)
\]
for all $n$ and $\phi$.  Thus this nonstable family lacks precisely the filling-current denominator.
\end{rem}

The next two results derive the probabilistic stability estimates from the deterministic finite-frequency estimate.

\begin{prop}[Filling-current denominator estimate]\label{prop:abstract-denominator}
Fix $q\ge 1$ and filling currents $\nu_1,\dots,\nu_q\in\mathcal S_A$.  For every $n\ge 1$ and every $1\le i\le q$, let $W_{i,n}$ be a random cyclically reduced word of length $n$, and suppose that
\[
 \widehat\eta_{i,n}:=\frac{1}{n}\eta_{W_{i,n}}
\]
concentrates exponentially at $\nu_i$ in the sense of Definition~\ref{def:exponential-concentration} for every $i$.  Then there are constants
\[
 0<c_*\le1,\qquad C_0,c_0>0
\]
such that
\[
 \Pr\left(
 \|\phi(W_{i,n})\|_A\ge c_*n\Lambda_A(\phi)
 \text{ for all }i\text{ and }\phi\in\Out(F_m)
 \right)
 \ge 1-C_0e^{-c_0n}.
\]
\end{prop}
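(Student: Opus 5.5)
The plan is to reduce to the first half of Proposition~\ref{prop:finite-frequency-estimate}, more precisely to its underlying Lemma~\ref{lem:uniform-filling-neighborhood}, and then take a union bound over the $q$ components.

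\emph{Step 1: a neighborhood in which the denominator bound holds.} For each $i$, Lemma~\ref{lem:uniform-filling-neighborhood} applied to the filling current $\nu_i\in\mathcal S_A$ gives a neighborhood $U_i$ of $\nu_i$ in $\mathcal S_A$ and a constant $c_i>0$. They satisfy
\[
 \langle T_A\phi,\eta\rangle\ge c_i\Lambda_A(\phi)
 \qquad(\eta\in U_i,\ \phi\in\Out(F_m)).
\]
Put $c_*=\min\{1,c_1,\dots,c_q\}$.

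\emph{Step 2: normalized counting currents lie in the slice.} Since $W_{i,n}$ is cyclically reduced of length $n$, we have
\[
 \|\eta_{W_{i,n}}\|_A=\langle T_A,\eta_{W_{i,n}}\rangle=\|W_{i,n}\|_A=n,
\]
so $\widehat\eta_{i,n}\in\mathcal S_A$. This is the setting of Definition~\ref{def:exponential-concentration}.

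\emph{Step 3: the deterministic inequality on the good event.} On the event $\widehat\eta_{i,n}\in U_i$, the equivariance \eqref{eq:intersection-equivariance} together with $\phi\eta_g=\eta_{\phi(g)}$ gives, for every $\phi$,
\[
 \frac1n\|\phi(W_{i,n})\|_A
 =\frac1n\langle T_A,\eta_{\phi(W_{i,n})}\rangle
 =\langle T_A\phi,\widehat\eta_{i,n}\rangle
 \ge c_i\Lambda_A(\phi)
 \ge c_*\Lambda_A(\phi).
\]
The quantifier over all $\phi\in\Out(F_m)$ comes for free. It is absorbed by the uniform lower bound in Lemma~\ref{lem:uniform-filling-neighborhood}, so no union bound over automorphisms is needed.

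\emph{Step 4: the probability estimate.} Exponential concentration gives constants $C_{U_i},c_{U_i}>0$ with
\[
 \Pr(\widehat\eta_{i,n}\notin U_i)\le C_{U_i}e^{-c_{U_i}n}.
\]
A union bound over $i=1,\dots,q$ bounds the failure probability by $\sum_i C_{U_i}e^{-c_{U_i}n}$. This is at most $C_0e^{-c_0n}$ with
\[
 C_0=\sum_i C_{U_i},
 \qquad
 c_0=\min_i c_{U_i}.
\]
Independence of the components is not needed.

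\emph{Main obstacle.} There is essentially none left at this stage; the substantive work is the compactness argument already done in Lemmas~\ref{lem:uniform-slice} and~\ref{lem:uniform-filling-neighborhood}. The only point requiring care is checking that the neighborhoods $U_i$ are neighborhoods in the subspace topology of $\mathcal S_A$. That is exactly the topology Definition~\ref{def:exponential-concentration} uses, so the concentration hypothesis applies directly.
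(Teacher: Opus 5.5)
Your proof is correct and is essentially the paper's argument. The paper invokes Proposition~\ref{prop:finite-frequency-estimate} and works with the cylinder-coordinate neighborhoods $V_i$ from its proof, whereas you apply Lemma~\ref{lem:uniform-filling-neighborhood} directly to obtain the $U_i$; this only removes a layer of repackaging, and in both versions the uniformity over all $\phi\in\Out(F_m)$ comes from the lemma, so only a union bound over the $q$ components is needed.
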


\begin{proof}
Choose $M$, $\epsilon$, and $c$ from Proposition~\ref{prop:finite-frequency-estimate}.  For $1\le i\le q$, let $V_i\subseteq\mathcal S_A$ be the open neighborhood of $\nu_i$ defined by
\[
 |\langle v,\eta\rangle_A-\langle v,\nu_i\rangle_A|<\epsilon
 \qquad(1\le |v|_A\le M).
\]
Exponential concentration and a union bound show that all $\widehat\eta_{i,n}$ lie in their corresponding $V_i$ outside an event of probability at most $C_0e^{-c_0n}$.  On the complementary event, Proposition~\ref{prop:finite-frequency-estimate} gives \eqref{eq:finite-frequency-denominator} for every $i$ and $\phi$.  Take $c_*=c$.
\end{proof}

\begin{thm}[Abstract master estimate]\label{thm:abstract-master}
Fix $q\ge 1$.  For every $n\ge 1$, let $\mathbf W_n=(W_{1,n},\dots,W_{q,n})$ be a random tuple of cyclically reduced words of common length $n$.  Suppose:
\begin{enumerate}
\item[\textup{(A1)}] There are filling currents $\nu_1,\dots,\nu_q\in\mathcal S_A$ such that, for every $1\le i\le q$, the random currents
\[
 \widehat\eta_{i,n}:=\frac{1}{n}\eta_{W_{i,n}}
\]
concentrate exponentially at $\nu_i$ in the sense of Definition~\ref{def:exponential-concentration};
\item[\textup{(A2)}] There are constants $D\ge0$, $C_1>0$, and $0<\theta<1$ such that, for every $n\ge 1$ and every integer $R$ with $1\le R\le n/4$,
\begin{equation}\label{eq:abstract-overlap-tail}
 \Pr\bigl(\rhoSigned(\mathbf W_n)\ge R\bigr)
 \le C_1n^D\theta^R.
\end{equation}
\end{enumerate}
Then there are constants $c_*>0$ and $C_0,c_0>0$ such that the following hold.
\begin{enumerate}[(1)]
\item For every $n\ge 1$ and every integer $R$ with $1\le R\le n/4$,
\begin{equation}\label{eq:abstract-master}
 \Pr\left(
 \DeltaA(\mathbf W_n)>
 \frac{2(R+2m)}{c_*n}
 \right)
 \le C_0e^{-c_0n}+C_1n^D\theta^R.
\end{equation}
\item One has
\[
 \DeltaA(\mathbf W_n)
 =O_{\Pr}\left(\frac{\log n}{n}\right),
\]
and for every fixed $\epsilon>0$ there are constants $C_\epsilon,c_\epsilon>0$ such that
\[
 \Pr\bigl(\DeltaA(\mathbf W_n)>\epsilon\bigr)
 \le C_\epsilon e^{-c_\epsilon n}
\]
for every $n\ge 1$.
\item For every fixed $0<\lambda<1$, the tuple $\mathbf W_n$ is admissible and $\lambda$-stable with exponentially high probability.
\end{enumerate}
\end{thm}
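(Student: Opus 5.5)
The plan is to combine Proposition~\ref{prop:abstract-denominator}, which uses (A1), with the deterministic bound \eqref{eq:finite-frequency-Delta} of Proposition~\ref{prop:finite-frequency-estimate}, which controls $\DeltaA$ through $\rhoSigned$, and then to use (A2) to control $\rhoSigned$.

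First take $M,\epsilon,c$ from Proposition~\ref{prop:finite-frequency-estimate} and set $c_*=c$. Let $G_n$ be the event that every $\widehat\eta_{i,n}$ lies in the neighborhood $V_i$ used in the proof of Proposition~\ref{prop:abstract-denominator}. By that proposition, $\Pr(G_n^c)\le C_0e^{-c_0n}$. Since all words have length $n$, we have $\ell_*=n$. On $G_n\cap\{\rhoSigned(\mathbf W_n)<R\}$, Lemma~\ref{lem:rho-signed-admissible} shows that $\mathbf W_n$ is admissible, because $\rhoSigned<\infty$. Then \eqref{eq:finite-frequency-Delta} gives
\[
\DeltaA(\mathbf W_n)\le \frac{2(\rhoSigned(\mathbf W_n)+2m)}{c_*n}<\frac{2(R+2m)}{c_*n}.
\]
The exceptional event for part~(1) is contained in $G_n^c\cup\{\rhoSigned\ge R\}$. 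A union bound with \eqref{eq:abstract-overlap-tail} then yields \eqref{eq:abstract-master}. The nonadmissible case, where $\DeltaA=\infty$, is covered because it forces $\rhoSigned=\infty\ge R$.

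For part~(2), take $R=\lceil K\log n\rceil$ with $K$ large enough that $n^D\theta^{R}\le n^{D-K\log(1/\theta)}\to0$. This choice is valid once $K\log n+1\le n/4$, which holds for large $n$. It gives $\Pr(\DeltaA>2(K\log n+1+2m)/(c_*n))\to0$, and hence $\DeltaA=O_{\Pr}(\log n/n)$. For a fixed $\epsilon>0$, instead take $R=\lfloor\beta n\rfloor$ with $\beta\le1/4$ small enough that $2(\beta n+2m)/(c_*n)\le\epsilon$ for large $n$, say $\beta<c_*\epsilon/4$. The right-hand side of \eqref{eq:abstract-master} becomes $C_0e^{-c_0n}+C_1n^D\theta^{\beta n-1}$, which is exponentially small because the polynomial factor is absorbed by shrinking the exponent. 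For the finitely many small $n$ (where $R$ might be $0$, or the estimate might not yet apply), we enlarge $C_\epsilon$ so that the bound is at least $1$.

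Part~(3) follows from part~(2) with $\epsilon=\lambda/2$ and from \eqref{eq:Delta-implies-stability}. The only delicate point is bookkeeping. We need the range restriction $1\le R\le n/4$ in (A2), and we need admissibility, which is not assumed but is extracted from finiteness of $\rhoSigned$. There is no real obstacle beyond these two points.
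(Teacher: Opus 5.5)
Your proposal is correct and follows essentially the same route as the paper. Both arguments proceed as follows:
\begin{itemize}
\item take the good event defined by the finite-frequency neighborhoods (your $G_n$, the paper's $E_n$) and intersect it with $\{\rhoSigned(\mathbf W_n)<R\}$;
\item obtain admissibility from Lemma~\ref{lem:rho-signed-admissible};
\item apply the deterministic bound \eqref{eq:finite-frequency-Delta} with $\ell_*=n$;
\item for part~(2), choose $R=\lceil K\log n\rceil$ and, separately, $R$ linear in $n$.
\end{itemize}
Your explicit remark that nonadmissible tuples fall into $\{\rhoSigned\ge R\}$ is a small clarification that the paper leaves implicit.
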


\begin{proof}
Choose $M$, $\epsilon_0$, and $0<c\le1$ from Proposition~\ref{prop:finite-frequency-estimate}, and put $c_*=c$.  For each $n$, let $E_n$ be the event that
\[
 \left|
 \langle v,\widehat\eta_{i,n}\rangle_A
 -\langle v,\nu_i\rangle_A
 \right|<\epsilon_0
\]
for every $1\le i\le q$ and every nontrivial freely reduced word $v$ with $|v|_A\le M$.  Assumption~\textup{(A1)} and a finite union bound give constants $C_0,c_0>0$ such that
\[
 \Pr(E_n^c)\le C_0e^{-c_0n}
\]
for every $n\ge 1$.

On $E_n\cap\{\rhoSigned(\mathbf W_n)<R\}$, the tuple is admissible by Lemma~\ref{lem:rho-signed-admissible}, and Proposition~\ref{prop:finite-frequency-estimate} gives
\[
 \Delta_A(\mathbf W_n)
 \le\frac{2\bigl(\rhoSigned(\mathbf W_n)+2m\bigr)}{c_*n}
 <\frac{2(R+2m)}{c_*n}.
\]
Combining the two exceptional-event bounds proves part~\textup{(1)}.

For the first assertion of part~\textup{(2)}, take $R=\lceil K\log n\rceil$ for all sufficiently large $n$, with $K$ large enough that $n^D\theta^R\to0$; for such $n$ one also has $R\le n/4$.  This gives
\[
 \Delta_A(\mathbf W_n)=O_{\Pr}\left(\frac{\log n}{n}\right).
\]
For fixed $\epsilon>0$, take
\[
 R_n=\left\lfloor\frac{c_*\min\{\epsilon,1\}}{4}n\right\rfloor.
\]
For all sufficiently large $n$, one has $1\le R_n\le n/4$ and
\[
 \frac{2(R_n+2m)}{c_*n}<\epsilon.
\]
Since $R_n$ grows linearly in $n$, the term $n^D\theta^{R_n}$ is exponentially small.  Part~\textup{(1)} therefore gives the fixed-scale exponential estimate; enlarging $C_\epsilon$ covers the finitely many remaining values of $n$.  Part~\textup{(3)} follows from \eqref{eq:Delta-implies-stability} and the fixed-scale estimate with $\epsilon=\lambda/2$.
\end{proof}

\section{Uniformly random reduced words}\label{sec:random-reduced}

Put
\[
 \Sigma=A^{\pm1},\qquad d=|\Sigma|=2m.
\]
We first prove the signed-overlap estimate for uniform cyclically reduced words.

\subsection{Signed periodic-overlap tails}

\begin{lem}[Conditional coordinates in a random cyclic word]\label{lem:conditional-coordinates}
Let $n\ge 2$, and let
\[
 X=X_0X_1\cdots X_{n-1}
\]
be uniform in $\CR_n$, and let $I\subseteq\Z/n\Z$ contain no two cyclically adjacent positions.  Conditional on all coordinates $X_j$ with $j\notin I$, the variables $X_i$, $i\in I$, are independent.  Each $X_i$ is uniform on
\[
 \Sigma\setminus\{X_{i-1}^{-1},X_{i+1}^{-1}\},
\]
a set of cardinality at least $d-2=2m-2$.
\end{lem}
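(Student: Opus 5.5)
The plan is to reduce everything to counting. The uniform measure on $\CR_n$ is the uniform measure on closed walks of length $n$ in the graph on $\Sigma$ whose edges are the pairs $(x,y)$ with $y\ne x^{-1}$. In other words, $X\in\CR_n$ exactly when $X_{j+1}\ne X_j^{-1}$ for every $j\in\Z/n\Z$, with indices read cyclically. The key observation is that this constraint set only couples cyclically adjacent positions. Each constraint involves exactly two positions $j$ and $j+1$.

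Fix an assignment $(y_j)_{j\notin I}$ of the coordinates outside $I$ with positive probability. Because the measure is uniform, the conditional law of $(X_i)_{i\in I}$ is uniform on the set of completions $(x_i)_{i\in I}$ for which the whole word is cyclically reduced. Since $I$ contains no two cyclically adjacent positions, every constraint involves at most one index of $I$. The constraints therefore split into two kinds:
\begin{itemize}
\item those touching no index of $I$, which are already satisfied by the given positive-probability assignment;
\item for each $i\in I$, exactly the two constraints $x_i\ne y_{i-1}^{-1}$ and $y_{i+1}\ne x_i^{-1}$, where both neighbours $i\pm1$ lie outside $I$.
\end{itemize}
The second kind of constraint is equivalent to $x_i\ne y_{i+1}^{-1}$. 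So the set of admissible completions is the product over $i\in I$ of the sets $\Sigma\setminus\{y_{i-1}^{-1},y_{i+1}^{-1}\}$. The uniform measure on a product set is the product of the uniform measures, which gives both independence and the stated marginals at once. Each factor has cardinality at least $d-2$, since at most two letters are removed.

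The only step needing care is degenerate small $n$, where $i-1$ and $i+1$ may coincide modulo $n$. For $n=2$, $I$ has at most one element and $i-1=i+1$, so the removed set is a single letter and the claim still holds. If $i-1\equiv i$, then $n=1$, which is excluded. Otherwise nothing else is needed: the argument is a direct product decomposition, and I expect no real obstacle beyond checking that the positive-probability conditioning makes the non-$I$ constraints automatic.
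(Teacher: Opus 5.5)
Your proposal is correct and is essentially the paper's argument: condition on a positive-probability labeling of the positions outside $I$, observe that each cyclic reducedness constraint involves at most one coordinate of $I$, and read off independence and the uniform marginals from the product structure of the set of admissible completions. Your explicit treatment of the degenerate case $n=2$, where $i-1\equiv i+1$ and only one letter is excluded, is a detail the paper leaves implicit.
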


\begin{proof}
Condition on an outside labeling of positive probability.  Since $I$ contains no adjacent positions, each remaining reducedness constraint involves one variable and its fixed neighbors.  Uniformity of admissible completions and factorization of their number give the asserted independence and conditional laws.
\end{proof}

\begin{prop}[Signed periodic-overlap tail]\label{prop:signed-overlap-tail}
Fix $q\ge 1$.  Let $\mathbf C_n=(C_{1,n},\dots,C_{q,n})$ be an independent uniformly random tuple in $\CR_n^q$.  There are constants $C_1>0$ and $0<\theta<1$, depending only on $m,q$, such that, for every $n\ge 1$ and every integer $R$ with $1\le R\le n/4$,
\begin{equation}\label{eq:signed-overlap-tail}
 \Pr\bigl(\rhoSigned(\mathbf C_n)\ge R\bigr)
 \le C_1n^2\theta^R.
\end{equation}
One may take $\theta=(2m-2)^{-1/18}$ after enlarging $C_1$.
\end{prop}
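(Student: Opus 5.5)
The plan is a union bound over pairs of distinct signed origins, combined with a conditional-independence estimate from Lemma~\ref{lem:conditional-coordinates}. The event $\rhoSigned(\mathbf C_n)\ge R$ means that some pair of distinct signed origins $(i,\epsilon,s)\ne(j,\delta,t)$ satisfies $\omega_{i,\epsilon,s}[0,R)=\omega_{j,\delta,t}[0,R)$; when the tuple is nonadmissible ($\rhoSigned=\infty$) such an agreement exists trivially, so the same event covers it. There are at most $(2qn)^2$ such pairs, which accounts for the factor $n^2$. It therefore suffices to show that, for each fixed pair, the probability of agreement on the window $[0,R)$ is at most $C\theta^R$ with $\theta=(2m-2)^{-1/18}$. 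Note $2m-2\ge2$ since $m\ge2$.

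For a fixed pair, the equalities $\omega_{i,\epsilon,s}(r)=\omega_{j,\delta,t}(r)$, $0\le r<R$, are relations among letters at positions of the two words (possibly the same word, possibly inverted). I will split into two cases.

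If $i\ne j$, the words are independent. Condition on $C_{j,n}$ entirely; the window on $C_{i,n}$ covers $R\le n/4$ consecutive cyclic positions. Take every third position in the window, giving roughly $R/3$ positions with no two adjacent. Conditioning on the other coordinates of $C_{i,n}$, Lemma~\ref{lem:conditional-coordinates} makes these letters independent, each uniform on a set of size at least $2m-2$. Each must equal a prescribed letter, so the probability is at most $(2m-2)^{-R/3+O(1)}$.

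If $i=j$, each equality identifies the letter at position $p_r$ with a letter (possibly inverted) at position $\pi(p_r)$, where $\pi$ is a shift or a reflection of $\Z/n\Z$ and $\pi$ is not the identity on the window. I want a set $I$ of window positions with the following properties: no two elements of $I\cup\pi(I)$ are cyclically adjacent, $\pi(I)\cap I=\emptyset$, and $|I|\ge R/18-O(1)$. Then I condition on all coordinates outside $I$. Each equality constrains $X_p$ with $p\in I$ to equal a now-fixed letter $X_{\pi(p)}^{\pm1}$, and conditional independence gives probability at most $(2m-2)^{-|I|}$.

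Constructing $I$ is the main obstacle; this is where the exponent $1/18$ should come from, by greedy selection. Each chosen $p$ excludes $p$, $\pi(p)$, $\pi^{-1}(p)$ and their neighbors, a bounded number of positions, so greedy selection retains a constant fraction. Two degenerate situations need separate handling.

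First, a small shift. Equality under a shift by $k<R$ makes the window $k$-periodic, so $X_{p+k}$ is determined by $X_p$. Here I would instead fix the first $k$ letters of the window and note that each subsequent letter is forced. Selecting positions spaced at least $3$ apart, each forced letter at a non-adjacent position still has conditional probability at most $1/(2m-2)$, because its value is determined by letters outside $I$ once I condition appropriately (use positions $p$ with $p-k\notin I$).

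Second, a reflection with fixed points or near-fixed points, such as $X_p=X_{c-p}^{-1}$. Near the center, a reflection $X_p=X_{p'}^{-1}$ with $p'=p\pm1$ contradicts reducedness outright, so the probability is zero. A point with $p=\pi(p)$ occurs at most twice and is discarded.

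The wrap-around condition $R\le n/4$ ensures the window and its image do not create cyclic collisions beyond these cases. Summing $(2qn)^2 C(2m-2)^{-R/18}$ over all pairs gives \eqref{eq:signed-overlap-tail}, with $C_1$ absorbing the constants.
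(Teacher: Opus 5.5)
Your proof is correct and follows essentially the paper's route: a union bound over the fewer than $4q^2n^2$ ordered pairs of signed origins, conditioning on one component in the cross-component case, and, in the same-component case, extracting a set $I$ of pairwise nonadjacent positions whose partner positions lie outside $I$ and then conditioning off $I$ via Lemma~\ref{lem:conditional-coordinates}. The only difference is in how $I$ is found. The paper obtains it uniformly from a matching in the maximum-degree-two constraint graph (at least $R/2$ edges, a matching of size at least $R/6$, a nonadjacent subset of size at least $R/18$). In fact the weaker condition you use in your small-shift case ($I$ nonadjacent and $\pi(I)\cap I=\emptyset$) suffices everywhere, so your extra requirement that $I\cup\pi(I)$ be nonadjacent, which is what forces the separate shift-by-$\pm1$ treatment, is unnecessary.
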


\begin{proof}
Fix two distinct signed origins.  Equality of their length-$R$ prefixes imposes constraints of the form
\[
 X_u=X_v
 \quad\text{or}\quad
 X_u=X_v^{-1}.
\]

If the origins lie in different components, choose at least $R/3$ pairwise nonadjacent positions in the relevant interval of the second component, discarding at most one endpoint at the cyclic cut.  Conditional on the first component and all other coordinates of the second, Lemma~\ref{lem:conditional-coordinates} makes these constraints independent, each with probability at most $(d-2)^{-1}$.  Their agreement probability is therefore at most $(d-2)^{-R/3}$, up to a fixed factor for small $R$.

Suppose the origins lie in one component.  With equal signs, the constraints join $s+r$ to $t+r$ for $0\le r<R$, so every vertex has degree at most two.  Distinct $r$ give distinct ordered edges.  Repetition as an unordered edge with reversed endpoints would imply $2(t-s)=0$ modulo $n$, hence $t-s=n/2$ and $r-r'\equiv n/2$ modulo $n$, impossible for $0\le r,r'<R\le n/4$.  Thus all $R$ edges are distinct.

If the signs are opposite, the constraint edges have the form
\[
 \{s+r,t-r-1\},\qquad0\le r<R.
\]
A loop makes the event empty.  Otherwise each unordered edge occurs at most twice, since an edge and one endpoint determine $r$ up to interchanging the endpoints.  Each cyclic position occurs at most once in each endpoint role, so the graph has at least $R/2$ distinct edges and maximum degree two.

Thus the constraint graph has at least $R/2$ edges and maximum degree two.  It has a matching of size at least $R/6$; choosing one endpoint from each matching edge and then an independent set in the cyclic adjacency graph gives a set $I$ of size at least $R/18$.  Orient the corresponding matching constraints toward $I$.  Their other endpoints are distinct and outside $I$, so after conditioning off $I$, each constraint prescribes a different coordinate.  Lemma~\ref{lem:conditional-coordinates} bounds their joint probability by
\[
 (d-2)^{-|I|}\le(d-2)^{-R/18}.
\]

There are $2qn$ signed origins and fewer than $4q^2n^2$ ordered pairs.  A union bound proves the result.
\end{proof}

\begin{cor}[Variable-length signed-overlap tail]\label{cor:signed-overlap-variable}
Fix $q\ge 1$ and positive integers $\ell_1,\dots,\ell_q$.  Let $C_1,\dots,C_q$ be independent uniformly random cyclically reduced words of lengths $\ell_1,\dots,\ell_q$, respectively.  Put
\[
 \ell_*=\min_i\ell_i,
 \qquad
 L=\sum_{i=1}^q\ell_i.
\]
For every integer $R$ with $1\le R\le\ell_*/4$,
\[
 \Pr\bigl(\rhoSigned(C_1,\dots,C_q)\ge R\bigr)
 \le 4L^2(2m-2)^{-R/18}.
\]
\end{cor}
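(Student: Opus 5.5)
We follow the proof of Proposition~\ref{prop:signed-overlap-tail} essentially verbatim, with $n$ replaced by the individual lengths $\ell_i$. There are three changes: the range restriction becomes $R\le\ell_*/4$, the count of signed origins becomes $2L$ instead of $2qn$, and the constant $C_1$ is made explicit. Fix two distinct signed origins $(i,\epsilon,s)$ and $(j,\delta,t)$. The event that their length-$R$ prefixes agree is a conjunction of $R$ constraints $X_u=X_v^{\pm1}$ between coordinates of $C_i$ and $C_j$. The plan is to extract a set $I$ of at least $R/18$ pairwise nonadjacent positions in a single component, such that the constraints prescribe distinct coordinates in $I$ in terms of coordinates outside $I$. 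Lemma~\ref{lem:conditional-coordinates}, applied to that component (which needs length at least $2$, true since $\ell_*\ge 4R\ge4$), then bounds the probability by $(2m-2)^{-|I|}\le(2m-2)^{-R/18}$.

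\textbf{Case $i\ne j$.} Condition on all of $C_i$. The constraints then fix $R$ consecutive cyclic positions of $C_j$; since $R\le\ell_j/4$, these are distinct. Among them we select every other position, discarding one at the cyclic cut if needed, to get a nonadjacent set of size at least $\lceil R/2\rceil-1$. We condition on the remaining coordinates of $C_j$. Because $R/2-1\ge R/18$ once $R\ge3$, the bound $(2m-2)^{-R/18}$ holds in that range. For $R\le2$ we need the factor $4L^2$ to absorb the loss. This is where I expect the main bookkeeping issue: making the constant exactly $4$ rather than some unspecified $C_1$. For $R=1,2$ the right-hand side $4L^2(2m-2)^{-R/18}$ is at least $1$ whenever $L\ge1$ and $m\ge2$ keep $(2m-2)^{-1/9}\ge 1/4$. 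That holds for moderate $m$ but not for all $m$, so instead I would verify that a single nonadjacent position always exists (for $R\ge1$) and gives $(2m-2)^{-1}\le(2m-2)^{-R/18}$ for $R\le18$. So no constant is needed in this case.

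\textbf{Case $i=j$.} Reuse the constraint-graph argument on $\Z/\ell_i\Z$. With equal signs all $R$ edges are distinct; the reversed-edge coincidence would force $t-s\equiv \ell_i/2$ and $r-r'\equiv\ell_i/2$, which is impossible since $R\le\ell_*/4\le\ell_i/4$. With opposite signs, a loop makes the event empty, and otherwise there are at least $R/2$ distinct edges. In both subcases the maximum degree is two. This gives a matching of size at least $R/6$, and then a cyclically nonadjacent set $I$ of endpoints of size at least $R/18$. We orient each matching constraint toward $I$ and condition off $I$. As in the proof of Proposition~\ref{prop:signed-overlap-tail}, the probability is at most $(2m-2)^{-R/18}$. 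The ceilings here should also be rechecked for small $R$; it suffices to note that $|I|\ge1$ whenever $R\ge1$ and there is no loop, and to use the same trick as above for $R<18$.

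\textbf{Union bound.} The number of signed origins is $\sum_i 2\ell_i=2L$, so there are fewer than $4L^2$ ordered pairs of distinct signed origins. Summing the per-pair bound $(2m-2)^{-R/18}$ over these pairs gives
\[
 \Pr\bigl(\rhoSigned(C_1,\dots,C_q)\ge R\bigr)\le 4L^2(2m-2)^{-R/18}.
\]
If $m=2$, then $2m-2=2$ and everything above goes through unchanged.
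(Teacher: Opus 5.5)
Your proposal is correct and matches the paper's argument. The paper observes that the proof of Proposition~\ref{prop:signed-overlap-tail} uses only $R\le\ell_i/4$ for the components containing the two origins, and then takes a union bound over the fewer than $4L^2$ ordered pairs of the $2L$ signed origins. Your additional check that each per-pair bound is at most $(2m-2)^{-R/18}$ with no extra constant, including for small $R$, is a detail the paper leaves implicit, and it is what justifies the explicit factor $4$.
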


\begin{proof}
The preceding proof uses only $R\le\ell_i/4$ for components containing an origin, so it applies unchanged.  There are $2L$ signed origins and fewer than $4L^2$ ordered pairs.
\end{proof}

\begin{cor}[Exponential admissibility]\label{cor:cr-admissibility}
Fix $q\ge 1$, and let $\mathbf C_n$ be the independent uniformly random tuple in $\CR_n^q$ from Proposition~\ref{prop:signed-overlap-tail}.  There are constants $C,c>0$, depending only on $m,q$, such that, for every $n\ge 1$, the tuple $\mathbf C_n$ is admissible with probability at least $1-Ce^{-cn}$.
\end{cor}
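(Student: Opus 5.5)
The plan is to combine Lemma~\ref{lem:rho-signed-admissible} with the signed-overlap tail of Proposition~\ref{prop:signed-overlap-tail}. By Lemma~\ref{lem:rho-signed-admissible}, $\mathbf C_n$ is admissible exactly when $\rhoSigned(\mathbf C_n)<\infty$. Any finite value of $\rhoSigned$ is already bounded in terms of $n$. So nonadmissibility forces a signed overlap of length $\ge R$ for every $R$, and in particular for $R=\lfloor n/4\rfloor$. Concretely, if two distinct signed periodic words agree identically, then they share a prefix of every length. Hence for $n\ge 4$ we get
\[
 \Pr(\mathbf C_n\text{ not admissible})\le\Pr\bigl(\rhoSigned(\mathbf C_n)\ge \lfloor n/4\rfloor\bigr)\le C_1n^2\theta^{\lfloor n/4\rfloor}.
\]

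Since $0<\theta<1$ (one may take $\theta=(2m-2)^{-1/18}$), the right-hand side is at most $C' e^{-c'n}$ for suitable constants. To see this, absorb the polynomial factor $n^2$ into the exponential by halving the rate: $n^2\theta^{n/4}\le C''\theta^{n/8}$. This leaves a bound of the form $Ce^{-cn}$ with $c=-\tfrac18\log\theta>0$, valid for all $n\ge 4$.

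The main point needing care is $m=2$. There $d-2=2m-2=2$, so $\theta=2^{-1/18}<1$ and the estimate still works. The argument does not break for $m\ge 2$, because the conditional alphabet in Lemma~\ref{lem:conditional-coordinates} has size at least $2$.

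For the finitely many small $n$ with $\lfloor n/4\rfloor<1$, i.e.\ $n\le 3$, the bound is trivial: probabilities are at most $1$, so it suffices to enlarge $C$ so that $Ce^{-cn}\ge1$ for $n\le 3$. The constants depend only on $m$ and $q$, inherited from Proposition~\ref{prop:signed-overlap-tail}. This completes the argument.
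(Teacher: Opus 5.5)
Your proposal is correct and matches the paper's proof. Nonadmissibility is equivalent to $\rhoSigned(\mathbf C_n)=\infty$ by Lemma~\ref{lem:rho-signed-admissible}, and this is controlled by Proposition~\ref{prop:signed-overlap-tail} with $R=\lfloor n/4\rfloor$, enlarging the prefactor for $n<4$. Your additional bookkeeping (absorbing $n^2$ into the exponential, and checking that $\theta<1$ when $m=2$) only spells out what the paper leaves implicit.
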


\begin{proof}
For $n\ge4$, nonadmissibility is equivalent to $\rhoSigned(\mathbf C_n)=\infty$ by Lemma~\ref{lem:rho-signed-admissible}, and Proposition~\ref{prop:signed-overlap-tail} applies with $R=\lfloor n/4\rfloor$.  Enlarging the prefactor covers $n<4$.
\end{proof}

\subsection{The uniform nonbacktracking current}

Define the \emph{uniform current} $\nu_A\in\Curr(F_m)$ by
\begin{equation}\label{eq:uniform-current-coordinates}
 \langle v,\nu_A\rangle_A
 =\frac{1}{m(2m-1)^{|v|-1}}
\end{equation}
for every nontrivial freely reduced word $v$.  These coordinates satisfy $\|\nu_A\|_A=1$.  Every reduced cylinder has positive measure, so
\[
 \supp(\nu_A)=\dd F_m.
\]

Since $\nu_A$ has full support, Corollary~1.6 of \cite{KL09} implies:

\begin{lem}\label{lem:uniform-current-filling}
The current $\nu_A\in \Curr(F_m)$ is filling.
\end{lem}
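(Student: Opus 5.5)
The plan is to verify the filling condition through the zero-intersection criterion of Proposition~\ref{prop:current-facts}(3). Let $T\in\cvbar_m$ be nontrivial and suppose $\langle T,\nu_A\rangle=0$. The criterion then gives
\[
 \dd F_m=\supp(\nu_A)\subseteq L^2(T).
\]
So it suffices to show that the dual algebraic lamination of a nontrivial very small tree can never be all of $\dd F_m$. This is what Corollary~1.6 of~\cite{KL09} records: a current with full support is filling. The proof would simply cite that result, but I would also indicate the underlying argument, as follows.

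First, I would recall the definition of $L^2(T)$. A pair $(\xi,\zeta)$ lies in $L^2(T)$ exactly when it is a limit of pairs $(g_k^{-\infty},g_k^{+\infty})$ with $\|g_k\|_T\to0$. Equivalently, for every $\epsilon>0$ every finite segment of the geodesic $(\xi,\zeta)$ is read inside the axis of some element of $T$-translation length less than $\epsilon$. Note that $L^2(T)$ is closed.

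Second, suppose $L^2(T)=\dd F_m$. Take an arbitrary cyclically reduced word $u$ and consider the periodic leaf $\ell_u$. By the definition of $L^2(T)$, for each $k$ there is an element $g_k$ with $\|g_k\|_T<1/k$ whose cyclic word contains $u^k$ as a subword. With a little more care, namely using leaves that also contain the letters adjacent to $u$, one can choose $g_k=u^kw_k$ with $w_k$ of bounded length. I would then use the standard estimate for translation lengths in very small trees,
\[
 \|u^kw\|_T\ge k\|u\|_T-C(w),
\]
which follows from bounded cancellation and the geometry of axes. This forces $\|u\|_T=0$ for every $u$. Then every element of $F_m$ is elliptic in $T$. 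By the global fixed point argument used in the proof of Proposition~\ref{prop:outer-space-facts}, $T$ is then trivial, which is a contradiction.

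I expect this last step to be the main obstacle: passing from ``every leaf is approximated by short-translation elements'' to ``every element is elliptic'' needs the bounded cancellation machinery of~\cite{KL09,KL10}. I would therefore defer to the cited corollary rather than reprove it. Given the full-support observation already made, the lemma follows in one line.
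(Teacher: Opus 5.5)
Your proposal is correct and matches the paper's proof, which is exactly the one-line citation: $\nu_A$ has full support, so Corollary~1.6 of~\cite{KL09} gives that it is filling. In your optional sketch, the claim that $g_k$ can be chosen as $u^kw_k$ with $|w_k|$ bounded is not justified, but it is also not needed: bounded backtracking for an equivariant Lipschitz map $T_A\to T$, applied along the axis of $g_k$, already gives $\|g_k\|_T\ge k\|u\|_T-C$ with $C$ independent of $w_k$.
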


\begin{lem}[Exponential current concentration]\label{lem:cr-current-concentration}
For every $n\ge 1$, let $C_n$ be uniform in $\CR_n$.  Then $\frac{1}{n}\eta_{C_n}$ concentrates exponentially at $\nu_A$ in the sense of Definition~\ref{def:exponential-concentration}.  Moreover, for every fixed $0<\alpha\le1$ and every neighborhood $U$ of $\nu_A$ in $\mathcal S_A$, there are constants $C_U,c_U>0$ such that
\[
 \Pr\left(\frac{1}{\ell}\eta_{C_\ell}\notin U\right)
 \le C_Ue^{-c_UN}
\]
whenever $N\ge 1$ and $\alpha N\le\ell\le N$.
\end{lem}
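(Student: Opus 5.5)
The plan is to reduce exponential concentration to finitely many scalar large-deviation estimates. The cylinder-coordinate neighborhood basis from Subsection~\ref{subsec:currents-prelim} shows that every neighborhood $U$ of $\nu_A$ in $\mathcal S_A$ contains a set of the form
\[
 \{\eta\in\mathcal S_A:|\langle v,\eta\rangle_A-\langle v,\nu_A\rangle_A|<\epsilon\ \text{for all } 1\le|v|_A\le M\}.
\]
Moreover, $\frac1\ell\eta_{C_\ell}\in\mathcal S_A$ automatically. A union bound over the finitely many $v$ therefore reduces the lemma to the following claim: for each fixed reduced $v$ of length $k\le M$ and each $\epsilon>0$,
\[
 \Pr\Bigl(\Bigl|\tfrac{\langle v,C_\ell\rangle_A}{\ell}-\tfrac{2}{2m(2m-1)^{k-1}}\Bigr|\ge\epsilon\Bigr)\le Ce^{-c\ell}.
\]
Here the target value equals \eqref{eq:uniform-current-coordinates}, because $\langle v,\cdot\rangle$ counts both $v$ and $v^{-1}$, and each occurs with stationary frequency $1/(2m(2m-1)^{k-1})$. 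Since $\ell\ge\alpha N$, such a bound immediately gives $Ce^{-c\alpha N}$, which yields the second, uniform assertion. The finitely many small $\ell$ are absorbed into the constant.

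To prove the scalar estimate, I would compare the uniform measure on $\CR_\ell$ with the uniform measure on $\FR_\ell$, that is, with the stationary nonbacktracking Markov chain on $\Sigma$ whose transition matrix $P$ sends each letter uniformly to the $2m-1$ letters other than its inverse. By \eqref{eq:number-cyclically-reduced}, $|\CR_\ell|/|\FR_\ell|$ is bounded below by a positive constant depending only on $m$. Hence any event of exponentially small probability for a uniform freely reduced word has exponentially small probability under the conditional law on $\CR_\ell$. The cyclic count $(v,C_\ell)_A$ differs from the linear count of occurrences in the word by at most $k-1$ wraparound occurrences, which is an $O(1/\ell)$ change after normalization. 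This reduces the problem to linear occurrence frequencies of a fixed word in a nonbacktracking random walk.

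For the Markov chain, I would apply a standard large-deviation (Chernoff) bound for additive functionals of a finite irreducible aperiodic chain to the sliding-window chain on reduced words of length $k$. The matrix $P$ is primitive for $m\ge2$, since $P^2$ is positive. The stationary measure of $v$ is $1/(2m(2m-1)^{k-1})$. Alternatively, one can split the positions into $k$ residue classes, so that within each class occurrences are separated by gaps of length at least $k$. One then uses exponential mixing, namely $P^j\to\frac1{2m}J$ geometrically, to obtain Hoeffding-type bounds for weakly dependent blocks.

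I expect the main obstacle to be making this Markov concentration step self-contained and uniform. I would handle it by citing the Gärtner--Ellis or Perron--Frobenius large-deviation theorem for finite Markov chains, which gives rate $c(\epsilon,v)>0$. The conditioning step is then routine because the density ratio is bounded.
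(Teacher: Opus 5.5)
Your proposal is correct and follows essentially the same route as the paper. Both arguments reduce to finitely many cylinder coordinates, realize the uniform law on $\CR_\ell$ as the stationary nonbacktracking chain (uniform on $\FR_\ell$) conditioned on cyclic reducedness with a bounded density ratio, absorb the at most $k-1$ wraparound occurrences, apply a large-deviation bound to the additive functional on the chain of reduced $k$-blocks, and finish with $e^{-c\ell}\le e^{-c\alpha N}$. The only cosmetic differences are that you bound the conditioning factor via the explicit count \eqref{eq:number-cyclically-reduced} rather than the paper's transition-matrix-power estimate, and you cite a general finite-Markov-chain large-deviation theorem where the paper cites~\cite[Section~5]{KSS06}.
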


\begin{proof}
It suffices to control finitely many cylinder coordinates.  Let $(Y_k)$ be the stationary nonbacktracking chain on $\Sigma$, with uniform initial distribution and transitions
\[
 \Pr(Y_{k+1}=y\mid Y_k=x)
 =\begin{cases}
 (d-1)^{-1},&y\ne x^{-1},\\
 0,&y=x^{-1}.
 \end{cases}
\]
Then $Y_0\cdots Y_{n-1}$ is uniform in $\FR_n$, and conditioning on
\[
 E_n=\{Y_{n-1}\ne Y_0^{-1}\}
\]
gives the uniform law on $\CR_n$.  For $n\ge 2$, every entry of every positive transition-matrix power is at most $(d-1)^{-1}$, since each later row is a convex combination of first-power rows.  Hence
\[
 \Pr(E_n)\ge\frac{d-2}{d-1}.
\]
(The case $n=1$ is immediate.)

For a reduced word $v$ of length $r$, its non-wrapping occurrence count is an additive functional of the finite-state chain of reduced $r$-blocks.  Thus the same applies, up to the bounded wrapping error, to the cyclic count $(v,C_n)_A$.  The large-deviation estimate of~\cite[Section~5]{KSS06} gives exponential concentration of the normalized count at its stationary mean
\[
 \frac{1}{d(d-1)^{r-1}}.
\]
Conditioning on $E_n$ changes the probability by at most the fixed factor $(d-1)/(d-2)$.  For $v$, the cyclic and non-wrapping occurrence counts differ by at most $r-1$, and the same is true for $v^{-1}$; hence the corresponding symmetrized counts differ by at most $2(r-1)$.  Applying the large-deviation estimate to both $v$ and $v^{-1}$ gives exponential concentration of
\[
 \frac{1}{n}\langle v,\eta_{C_n}\rangle_A
 =\frac{1}{n}\langle v,C_n\rangle_A
\]
at
\[
 \frac{2}{d(d-1)^{r-1}}
 =\frac{1}{m(2m-1)^{r-1}},
\]
which is \eqref{eq:uniform-current-coordinates}.  Since neighborhoods specified by finitely many cylinder-coordinate inequalities form a basis for the current topology, a finite union bound proves exponential concentration.  The finitely many values of $n$ smaller than one of the block lengths occurring in such a neighborhood are absorbed by increasing the prefactor.  For the final assertion, apply this estimate at length $\ell$ and use $e^{-c\ell}\le e^{-c\alpha N}$ whenever $\alpha N\le\ell\le N$.
\end{proof}

\subsection{Uniform $\lambda$-stability for random reduced words}

\phantomsection\label{proof:mainthm-stability}
\begin{mainthmrestated}{mainthm:stability}{Uniform $\lambda$-stability}
Fix $m\ge 2$, $q\ge 1$, and $0<\lambda<1$.  Let
\[
 \mathbf C_n=(C_{1,n},\dots,C_{q,n})
\]
be an independent uniformly random $q$-tuple in $\CR_n^q$.  Then the following hold.
\begin{enumerate}[(1)]
\item There are constants $C,c>0$ such that, for every $n\ge 1$, with probability at least $1-Ce^{-cn}$ the tuple $\mathbf C_n$ is root-free, irredundant, and $\lambda$-stable.

\item More precisely,
\[
 \Delta_A(\mathbf C_n)=O_{\Pr}\left(\frac{\log n}{n}\right),
\]
and for every fixed $\epsilon>0$ there are $C_\epsilon,c_\epsilon>0$ such that
\[
 \Pr\bigl(\Delta_A(\mathbf C_n)>\epsilon\bigr)
 \le C_\epsilon e^{-c_\epsilon n}
\]
for every $n\ge 1$.

\item If $W_{1,n},\dots,W_{q,n}$ are independent uniformly random words in $\FR_n$ and $\widehat W_{i,n}$ is the cyclically reduced form of $W_{i,n}$, then the conclusions of parts~\textup{(1)}--\textup{(2)} hold for
\[
 \widehat{\mathbf W}_n=(\widehat W_{1,n},\dots,\widehat W_{q,n});
\]
in particular,
\[
 \Delta_A(\widehat{\mathbf W}_n)
 =O_{\Pr}\left(\frac{\log n}{n}\right),
\]
and the corresponding fixed-scale tails are exponential.
\end{enumerate}
\end{mainthmrestated}

\begin{proof}
Proposition~\ref{prop:signed-overlap-tail} and Lemmas~\ref{lem:uniform-current-filling}--\ref{lem:cr-current-concentration} verify the hypotheses of Theorem~\ref{thm:abstract-master}, proving the assertions for $\mathbf C_n$.

It remains to treat freely reduced words.  Every $W\in\FR_n$ has a unique decomposition
\begin{equation}\label{eq:cyclic-reduction-decomposition}
 W=uCu^{-1},
\end{equation}
where $C$ is cyclically reduced.  Put $K=|u|$ and $L=|C|=n-2K$.

Conditional on $L=\ell$, the word $C$ is uniform in $\CR_\ell$.  For $k=(n-\ell)/2\ge 1$ and fixed $C=c_1\cdots c_\ell$, a reduced $u=x_1\cdots x_k$ gives \eqref{eq:cyclic-reduction-decomposition} exactly when
\[
 x_k\ne c_1^{-1},
 \qquad
 x_k\ne c_\ell.
\]
The two forbidden letters are distinct, so the number of such $u$ is
\[
 (d-2)(d-1)^{k-1},
\]
independent of $C$.  The case $k=0$ is immediate.

Moreover, for $t\ge 1$,
\begin{equation}\label{eq:cyclic-cancellation-tail}
 \Pr(K\ge t)
 \le\frac{d-1}{d-2}(d-1)^{-t}.
\end{equation}
Indeed, use $|\CR_{n-2k}|\le d(d-1)^{n-2k-1}$ and the preceding count, divide by $|\FR_n|=d(d-1)^{n-1}$, and extend the sum over $k\ge t$ to an infinite geometric series.

For the tuple, \eqref{eq:cyclic-cancellation-tail} puts all cyclic-reduction lengths in $[n/2,n]$ outside an exponentially small event.  Conditional on these lengths, the cyclic reductions are independent and uniform.  Choose $M$, $\epsilon$, and $c$ from Proposition~\ref{prop:finite-frequency-estimate} in the case $q=1$ and $\nu_1=\nu_A$.  Uniformly for $|C_i|\in[n/2,n]$, Lemma~\ref{lem:cr-current-concentration} shows that the corresponding finite-frequency inequalities hold simultaneously outside an exponentially small event.  Applying the denominator conclusion of Proposition~\ref{prop:finite-frequency-estimate} separately to each singleton $(C_i)$ therefore gives
\[
 \|\phi(C_i)\|_A\ge c\,|C_i|\Lambda_A(\phi)
 \ge(c/2)n\Lambda_A(\phi)
\]
for every $i$ and $\phi$ on that event.  Corollary~\ref{cor:signed-overlap-variable} supplies the overlap estimate uniformly for $R\le n/8$.  The proof of Theorem~\ref{thm:abstract-master}, with $n/8$ in place of $n/4$, then gives
\[
 \Delta_A(\widehat{\mathbf W}_n)
 =O_{\Pr}\left(\frac{\log n}{n}\right)
\]
with exponential tails at every fixed positive scale; admissibility and $\lambda$-stability follow.
\end{proof}

\begin{correstated}{cor:sphere-ball-stability}{Freely reduced sphere and ball models}
Fix $m\ge 2$, $q\ge 1$, and $0<\lambda<1$.
\begin{enumerate}[(1)]
\item Let
\[
 \mathbf W_n=(W_{1,n},\dots,W_{q,n})
\]
be an independent uniformly random $q$-tuple in $\FR_n^q$.  Then there are constants $C,c>0$ such that, for every $n\ge 1$, with probability at least $1-Ce^{-cn}$ the tuple $\mathbf W_n$ is root-free, irredundant, and $\lambda$-stable.  Moreover,
\[
 \Delta_A(\mathbf W_n)=O_{\Pr}\left(\frac{\log n}{n}\right),
\]
and for every fixed $\epsilon>0$ there are $C_\epsilon,c_\epsilon>0$ such that
\[
 \Pr\bigl(\Delta_A(\mathbf W_n)>\epsilon\bigr)
 \le C_\epsilon e^{-c_\epsilon n}
\]
for every $n\ge 1$.

\item Let
\[
 B_A(n)=\{g\in F_m:|g|_A\le n\},
\]
and let $\mathbf U_n=(U_{1,n},\dots,U_{q,n})$ be uniformly distributed on $B_A(n)^q$ (equivalently, its components are independent uniformly random freely reduced words of length at most $n$).  Then the conclusions of part~\textup{(1)} hold with $\mathbf U_n$ in place of $\mathbf W_n$.
\end{enumerate}
\end{correstated}

\begin{proof}
For part~\textup{(1)}, each $W_{i,n}$ is conjugate to $\widehat W_{i,n}$, and all asserted properties depend only on the component conjugacy classes.  Apply Theorem~\ref{mainthm:stability}(3).

For part~\textup{(2)}, put $N_i=|U_{i,n}|_A$.  Since the number of freely reduced words of length $k$ grows like $(2m-1)^k$, there are constants $C_1,c_1>0$ such that
\[
 \Pr(N_i<3n/4)\le C_1e^{-c_1n}.
\]
Conditional on $N_i$, the reduced word representing $U_{i,n}$ is uniform in $\FR_{N_i}$.  Write
\[
 U_{i,n}=u_iC_i u_i^{-1},
\]
where $C_i$ is cyclically reduced and $K_i=|u_i|_A$.  The cyclic-cancellation estimate~\eqref{eq:cyclic-cancellation-tail}, uniformly in $N_i$, gives
\[
 \Pr(K_i>n/8\mid N_i)\le C_2e^{-c_2n}
\]
for suitable constants.  Hence, outside an exponentially small event,
\[
 n/2\le \ell_i:=|C_i|_A\le n
 \qquad(1\le i\le q).
\]
Conditional on the $\ell_i$, the $C_i$ are independent and uniform in $\CR_{\ell_i}$.  Uniformly for $\ell_i\in[n/2,n]$, Lemma~\ref{lem:cr-current-concentration} and the componentwise denominator conclusion of Proposition~\ref{prop:finite-frequency-estimate} give
\[
 \|\phi(C_i)\|_A\ge c\ell_i\Lambda_A(\phi)
 \ge(c/2)n\Lambda_A(\phi)
\]
outside an exponentially small event, for a constant $c>0$ independent of the lengths.  Corollary~\ref{cor:signed-overlap-variable} provides the overlap estimate.  Repeating the argument from the proof of Theorem~\ref{thm:abstract-master} gives
\[
 \Delta_A(C_1,\dots,C_q)
 =O_{\Pr}\left(\frac{\log n}{n}\right)
\]
and exponential fixed-scale tails, hence admissibility and $\lambda$-stability with exponentially high probability.  Conjugacy invariance transfers these conclusions to $\mathbf U_n$.
\end{proof}

\section{Positive Bernoulli words}\label{sec:positive-corollary}

The abstract criterion also yields the earlier positive-word theorem without positive-specific rigidity machinery.

For a strictly positive probability vector
\[
 \mathbf p=(p_1,\dots,p_m),
 \qquad p_s>0,
 \qquad \sum p_s=1,
\]
let $\nu_{\mathbf p}$ be the characteristic current of the Bernoulli chain on $A$, equivalently the current characterized by
\[
 \langle v,\nu_{\mathbf p}\rangle_A
 =\begin{cases}
 p_{i_1}\cdots p_{i_r},
 &v^\epsilon=a_{i_1}\cdots a_{i_r}
   \text{ for some }\epsilon\in\{\pm1\},\\
 0,&v\text{ contains both positive and negative letters}.
 \end{cases}
\]
Then $\|\nu_{\mathbf p}\|_A=1$, and
\[
 \supp(\nu_{\mathbf p})=\mathcal L_A^+\cup\mathcal L_A^-,
\]
where $\mathcal L_A^\pm$ are the positive and negative oriented line languages.

\begin{lem}[Positive Bernoulli currents are filling]\label{lem:positive-current-filling}
Let $\mathbf p=(p_1,\dots,p_m)$ be a strictly positive probability vector on $A$, and let $\nu_{\mathbf p}$ be the current defined above.  Then $\nu_{\mathbf p}$ is filling in $\Curr(F_m)$.
\end{lem}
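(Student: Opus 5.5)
We argue by contradiction using the zero-intersection criterion of Proposition~\ref{prop:current-facts}(3).  Suppose $T\in\cvbar_m$ is nontrivial with $\langle T,\nu_{\mathbf p}\rangle=0$.  Then $\supp(\nu_{\mathbf p})\subseteq L^2(T)$.  Since every positive cylinder has positive $\nu_{\mathbf p}$-mass, the support contains every bi-infinite positive line, and all its $F_m$-translates and flips.  In particular it contains the periodic leaves of every positive cyclically reduced word.  Because $L^2(T)$ is closed, $F_m$-invariant and flip-invariant, the plan is to derive from this that some element acts elliptically, or with small translation length, in a way that forces $T$ to be trivial.

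The key step is a standard fact about dual laminations: if the periodic leaf $\ell_g$ lies in $L^2(T)$, then $\|g\|_T=0$.  This follows from Proposition~\ref{prop:current-facts}(3) applied to $\nu=\eta_g$, since $\supp(\eta_g)=F_m\cdot\ell_g\cup F_m\cdot\iota(\ell_g)\subseteq L^2(T)$ gives $\|g\|_T=\langle T,\eta_g\rangle=0$.  Applying this to every positive word $g$, we conclude that every positive element is elliptic in $T$.  In particular the generators $a_1,\dots,a_m$ are elliptic, and so are all products $a_ia_j$ and $a_ia_ja_k$.

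Next we use the standard tree fact, due to Serre and Culler--Morgan, that if $x,y$ and $xy$ are all elliptic in an $\R$-tree, then $\langle x,y\rangle$ has a global fixed point.  For $m$ generators one needs all pairwise products elliptic, and inductively triple products.  For finitely generated groups acting on $\R$-trees, a finite family of elliptic generators whose products of at most two elements are elliptic has a common fixed point.  All these products are positive words, so $F_m$ fixes a point of $T$, contradicting nontriviality and minimality of $T\in\cvbar_m\setminus\{0\}$.

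The main obstacle is justifying the pairwise-product fixed point criterion in the $\R$-tree (non-simplicial) setting, rather than only in simplicial trees.  If citing it is undesirable, an alternative is to use formula \eqref{eq:Lambda-short-words}.  Every cyclically reduced word $u$ of length at most $2$ is either positive, negative, or of the form $a_ia_j^{-1}$ with $i\ne j$.  Positive and negative ones have $\|u\|_T=0$, and $\|a_ia_j^{-1}\|_T$ could in principle be positive, so this route is inconclusive as stated.  I would therefore rely on the Culler--Morgan ellipticity lemma: if $\|x\|=\|y\|=\|xy\|=0$ then $\|xy^{-1}\|=0$ as well (the fixed sets intersect).  With that, every word of length at most $2$ has zero translation length, so $\Lambda_A(T)=0$.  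This contradicts positivity in Proposition~\ref{prop:outer-space-facts}(2).
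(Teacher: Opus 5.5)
Your argument is correct and is essentially the paper's proof. The zero-intersection criterion, applied to the periodic leaves of $a_i$ and $a_ia_j$ in $\supp(\nu_{\mathbf p})$, gives $\|a_i\|_T=\|a_ia_j\|_T=0$, so the fixed subtrees of the basis elements intersect pairwise, and the Helly property for subtrees of an $\R$-tree then yields a global fixed point contradicting minimality. This is Serre's lemma, which needs only products of at most two generators, not triple products. The paper settles your ``main obstacle'' by exactly this Helly argument, so the detour through \eqref{eq:Lambda-short-words} is unnecessary, though it is also valid since it rests on the same ellipticity lemma.
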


\begin{proof}
The support contains the periodic leaves of every $a_i$ and $a_ia_j$, $i<j$.  If $\langle T,\nu_{\mathbf p}\rangle=0$, the zero-intersection criterion gives
\[
 \|a_i\|_T=0,
 \qquad
 \|a_ia_j\|_T=0.
\]
The fixed subtrees of the basis elements therefore intersect pairwise, and the Helly property gives a global fixed point, contradicting minimality of a nontrivial tree.
\end{proof}

\begin{lem}[Bernoulli current concentration]\label{lem:positive-current-concentration}
Let $\mathbf p=(p_1,\dots,p_m)$ be a strictly positive probability vector on $A$.  For every $n\ge 1$, let $W_n=X_1\cdots X_n$ be a positive Bernoulli word with letter distribution $\mathbf p$.  Then $\frac{1}{n}\eta_{W_n}$ concentrates exponentially at $\nu_{\mathbf p}$ in the sense of Definition~\ref{def:exponential-concentration}.
\end{lem}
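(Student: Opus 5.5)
By the neighborhood-basis description of the weak-$\ast$ topology, it suffices to prove the following: for each fixed nontrivial freely reduced word $v$ of length $r$ and each $\delta>0$ there are $C,c>0$ with
\[
 \Pr\Bigl(\bigl|\tfrac1n\langle v,W_n\rangle_A-\langle v,\nu_{\mathbf p}\rangle_A\bigr|\ge\delta\Bigr)\le Ce^{-cn}
\]
for all $n$. A general neighborhood $U$ of $\nu_{\mathbf p}$ in $\mathcal S_A$ contains a basic neighborhood determined by finitely many such inequalities, so a finite union bound then gives $\Pr(\frac1n\eta_{W_n}\notin U)\le C_Ue^{-c_Un}$. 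We also note that $\frac1n\eta_{W_n}\in\mathcal S_A$, since $W_n$ is a positive, hence cyclically reduced, word of length $n$.

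If $v$ contains both positive and negative letters, then $\langle v,W_n\rangle_A=0=\langle v,\nu_{\mathbf p}\rangle_A$ deterministically, because $W_n$ is positive. Otherwise, after replacing $v$ by $v^{-1}$ (which leaves $\langle v,\cdot\rangle_A$ unchanged), we may assume $v=a_{i_1}\cdots a_{i_r}$ is positive. Then $(v^{-1},W_n)_A=0$ for $r\ge1$, so $\langle v,W_n\rangle_A=(v,W_n)_A$. This count equals the linear count $N_n=\sum_{s=1}^{n-r+1}\mathbf 1\{X_s\cdots X_{s+r-1}=v\}$ plus at most $r-1$ wrapping occurrences, an error of $O(r/n)$ after normalization. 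Each indicator has mean $p_{i_1}\cdots p_{i_r}=\langle v,\nu_{\mathbf p}\rangle_A$.

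For the concentration of $N_n$ I would split the positions $s$ into the $r$ residue classes modulo $r$. Within a fixed class the windows are disjoint, so the indicators are i.i.d.\ Bernoulli variables. Hoeffding's inequality gives, for each class of size about $n/r$, a deviation probability of at most $2e^{-2(\delta/2)^2n/r}$ for the average. Averaging over the $r$ classes and taking a union bound gives the claimed exponential bound for $n\ge n_1(\delta,r)$, with the wrapping error absorbed by taking $n$ large. The finitely many small $n$ are covered by enlarging $C$. Alternatively, one could cite the Markov-chain large-deviation estimate already used in Lemma~\ref{lem:cr-current-concentration}, applied to the chain of $r$-blocks, but the residue-class argument is self-contained.

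There is no real obstacle here. The only care needed is the bookkeeping: restricting to positive $v$ via the symmetry $v\leftrightarrow v^{-1}$, and bounding the cyclic-versus-linear discrepancy uniformly.
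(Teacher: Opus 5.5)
Your proposal is correct and follows essentially the same route as the paper: reduce to finitely many cylinder coordinates, note that mixed-sign coordinates vanish and that negative ones reduce to positive ones by flip symmetry, concentrate the positive occurrence counts, and absorb the small values of $n$ into the prefactor. The only difference is the concentration tool. The paper applies McDiarmid's bounded-differences inequality directly to the cyclic count: changing one letter alters at most $r$ cyclic windows, and for $n\ge r$ the cyclic count already has exact mean $n\,p_{i_1}\cdots p_{i_r}$, so no wrapping correction is needed. Your residue-class blocking combined with Hoeffding's inequality is an equally valid and slightly more elementary substitute.
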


\begin{proof}
For $n\ge r$, the normalized cyclic occurrence count $\frac{1}{n}(v,W_n)_A$ of a positive word $v$ of length $r$ has the corresponding product expectation.  Changing one letter affects at most $r$ cyclic windows, so McDiarmid's inequality~\cite{McD89} gives exponential concentration.  Negative coordinates follow by flip invariance, mixed-sign coordinates vanish, and a finite union bound over the coordinates defining a neighborhood proves exponential concentration in the sense of Definition~\ref{def:exponential-concentration}.  The finitely many smaller values of $n$ are absorbed by increasing the prefactor.
\end{proof}

\begin{cor}[Positive Bernoulli stability]\label{cor:positive-stability}
Fix $q\ge 1$, $0<\lambda<1$, and strictly positive probability vectors $\mathbf p_1,\dots,\mathbf p_q$ on $A$.  For every $n\ge 1$, let $W_{1,n},\dots,W_{q,n}$ be independent positive Bernoulli words of length $n$ with their indicated distributions, and put
\[
 \mathbf W_n=(W_{1,n},\dots,W_{q,n}).
\]
Then the following hold.
\begin{enumerate}[(1)]
\item There are constants $C,c>0$ such that, for every $n\ge 1$, the tuple $\mathbf W_n$ is root-free, irredundant, and $\lambda$-stable with probability at least $1-Ce^{-cn}$.
\item One has
\[
 \Delta_A(\mathbf W_n)=O_{\Pr}\left(\frac{\log n}{n}\right),
\]
and for every fixed $\epsilon>0$ there are constants $C_\epsilon,c_\epsilon>0$ such that
\[
 \Pr\bigl(\Delta_A(\mathbf W_n)>\epsilon\bigr)
 \le C_\epsilon e^{-c_\epsilon n}
\]
for every $n\ge 1$.
\end{enumerate}
\end{cor}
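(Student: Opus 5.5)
The plan is to apply the abstract master estimate, Theorem~\ref{thm:abstract-master}, to $\mathbf W_n$. Positive words are cyclically reduced, of common length $n$, so the setting matches. Part~(3) of that theorem gives part~(1) here, and part~(2) gives part~(2). It therefore suffices to verify hypotheses (A1) and (A2).

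For (A1), take $\nu_i=\nu_{\mathbf p_i}$. These lie in $\mathcal S_A$ since $\|\nu_{\mathbf p}\|_A=\sum_s p_s=1$. They are filling by Lemma~\ref{lem:positive-current-filling}, and $\frac1n\eta_{W_{i,n}}$ concentrates exponentially at $\nu_{\mathbf p_i}$ by Lemma~\ref{lem:positive-current-concentration}. Different components may use different vectors $\mathbf p_i$; the abstract theorem allows this, since (A1) is stated componentwise.

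For (A2), I will prove a tail bound $\Pr(\rhoSigned(\mathbf W_n)\ge R)\le C_1n^2\theta^R$ for $R\le n/4$, with $\theta=\max_{i,s}p_{i,s}^{1/3}<1$ or a similar fixed power. The first step is to eliminate most signed origins. Every periodic word $\omega_{i,+,s}$ consists of positive letters, while every $\omega_{i,-,s}$ consists of negative letters. Hence two distinct signed origins with opposite signs have overlap $0$. Two negative origins agree on a length-$R$ prefix exactly when two positive origins agree on a length-$R$ window ending at the corresponding positions. So it suffices to bound the probability that two distinct positive origins agree for $R$ steps.

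I then split into two cases, following Proposition~\ref{prop:signed-overlap-tail} but using independent letters, which is simpler.
\begin{enumerate}[(a)]
\item \emph{Different components.} Condition on the first word. The $R$ letters of the second word in the window are distinct positions, since $R\le n/4<n$, and they are independent. Each must equal a prescribed letter, which happens with probability at most $p_{\max}:=\max_{i,s}p_{i,s}<1$, giving a bound $p_{\max}^R$. If $m\ge2$ then $p_{\max}<1$ automatically.
\item \emph{Same component, shifts $s\ne t$.} The constraints $X_{s+r}=X_{t+r}$ form a graph of maximum degree two. It has $R$ distinct edges: a reversed repetition would force $t-s=n/2$ together with $r-r'\equiv n/2$, which is impossible for $R\le n/4$. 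As before, extract a matching of size at least $R/3$ and orient each edge toward one endpoint. The pinned endpoints are distinct and can be revealed last, each hitting a prescribed letter with probability at most $p_{\max}$. This gives the bound $p_{\max}^{R/3}$.
\end{enumerate}
A union bound over fewer than $4q^2n^2$ ordered pairs of origins finishes (A2), with $D=2$ and $\theta=p_{\max}^{1/3}$.

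The main point needing care is that Theorem~\ref{thm:abstract-master}(3) also delivers root-freeness and irredundancy. Through Lemma~\ref{lem:rho-signed-admissible} this requires $\rhoSigned<\infty$, which the tail bound at $R=\lfloor n/4\rfloor$ gives with exponentially high probability. Small $n$ are handled by enlarging the constants. Beyond that I expect no real obstacle, since fillingness of the limiting currents was already established.
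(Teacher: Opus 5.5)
Your proposal is correct and follows the paper's proof essentially step for step. You verify (A1) via Lemmas~\ref{lem:positive-current-filling} and~\ref{lem:positive-current-concentration}, then prove (A2) by discarding opposite-sign pairs, reducing negative pairs to positive ones, and using independence across components and a matching of size at least $R/3$ within a component, followed by a union bound over origin pairs. The only cosmetic difference is the per-constraint constant: you use $p_{\max}=\max_{i,s}p_{i,s}$ after conditioning, while the paper uses the collision probability $\beta=\max_{i,j}\sum_s p_{i,s}p_{j,s}\le p_{\max}$, and either constant works.
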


\begin{proof}
Put
\[
 \beta=\max_{1\le i,j\le q}
 \sum_{s=1}^m p_{i,s}p_{j,s}<1.
\]
Two prescribed positive factors from distinct components agree with probability at most $\beta^R$.  Within one component, for $R\le n/4$ the $R$ equality constraints form a maximum-degree-two graph with $R$ distinct edges; a matching of size at least $R/3$ gives probability at most $\beta^{R/3}$.  Opposite signs cannot share a nonempty prefix, while inversion and reversal reduce two negative factors to the positive case.  A union bound over same-sign origin pairs gives
\[
 \Pr\bigl(\rhoSigned(W_{1,n},\dots,W_{q,n})\ge R\bigr)
 \le q^2n^2\beta^{R/3}.
\]
Lemmas~\ref{lem:positive-current-filling} and~\ref{lem:positive-current-concentration} verify the current hypothesis of Theorem~\ref{thm:abstract-master}, which now gives the result.
\end{proof}

We conclude this section with an almost-sure consequence for general finitely supported group random walks.  Unlike the preceding sphere and Bernoulli models, no exponential estimate for stability itself is asserted here.

\begin{cor}[Stability along finitely supported random walks]\label{cor:finitely-supported-random-walk}
Let $\mu$ be a finitely supported probability measure on $F_m$ such that the semigroup generated by $\supp(\mu)$ is $F_m$.  Let
\[
 W_n=X_1\cdots X_n
\]
be the associated random walk, and let $C_n$ be a cyclically reduced representative of the conjugacy class $[W_n]$.  Then, for almost every trajectory,
\[
 \Delta_A(C_n)\longrightarrow0
 \qquad(n\to\infty).
\]
In particular, for every fixed $0<\lambda<1$, the word $C_n$ is root-free and $\lambda$-stable for all sufficiently large $n$, almost surely.
\end{cor}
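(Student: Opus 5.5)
We plan to reduce the statement to Theorem~\ref{thm:finite-frequency-stability} in the case $q=1$ by showing that, almost surely, the normalized counting currents $\frac{1}{\|W_n\|_A}\eta_{W_n}$ converge in $\mathcal S_A$ to a single filling current $\nu_\mu$. We also need the normalized signed overlap $\rhoSigned(C_n)/\|C_n\|_A$ to tend to $0$ almost surely. Once both facts are available, Proposition~\ref{prop:finite-frequency-estimate} with $q=1$ and $\nu_1=\nu_\mu$ gives, for all large $n$ on a full-measure set,
\[
 \Delta_A(C_n)\le\frac{2\bigl(\rhoSigned(C_n)+2m\bigr)}{c\,\|C_n\|_A}.
\]
This bound tends to $0$, and \eqref{eq:Delta-implies-stability} then yields root-freeness (admissibility of a singleton) and $\lambda$-stability eventually.

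\textbf{Step 1 (linear growth and the limit current).} The semigroup generated by $\supp(\mu)$ is all of $F_m$, so the walk is nonelementary. Standard results (Kingman subadditivity together with positivity of the drift in a nonamenable group) give $|W_n|_A/n\to\ell_\mu>0$ almost surely. The cyclic reduction $W_n=u_nC_nu_n^{-1}$ satisfies $|u_n|_A=o(n)$ almost surely, because the walk converges to a boundary point and the Gromov product of $W_n^{-1}$ and $W_n$ at $1$ is sublinear. Hence $\|C_n\|_A/n\to\ell_\mu$. For the currents we follow the Kapovich--Lustig-type argument, or the ergodic theorem applied to cylinder counts. Consider the bi-infinite walk increments and read off the geodesic joining the forward and backward boundary limits. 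This gives a stationary ergodic measure on $\dd F_m$, which pushes forward to a current $\nu_\mu$. Each cylinder frequency $\frac1n\langle v,C_n\rangle_A$ then converges almost surely to $\langle v,\nu_\mu\rangle_A$ after normalization. Only the finitely many cylinders with $|v|_A\le M$ matter, so countable intersections of full-measure events suffice.

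\textbf{Step 2 (fillingness).} We intend to show that $\supp(\nu_\mu)$ contains the periodic leaves of every $g\in F_m$, in fact that $\supp(\nu_\mu)=\dd F_m$, and then apply Corollary~1.6 of \cite{KL09} exactly as in Lemma~\ref{lem:uniform-current-filling}. The argument runs as follows. The semigroup generated by $\supp(\mu)$ is $F_m$, so any fixed reduced word $v$ is realized as a product $x_1\cdots x_k$ of support elements. With positive probability a stretch of increments spells a high power of $g$ sandwiched between generic pieces. Ergodicity then shows that cylinders around long subsegments of the axis of $g$ have positive frequency.

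\textbf{Step 3 (sublinear overlap).} This is the main obstacle, because there is no independence across positions of $C_n$. The first thing to try is this. A signed overlap of length $R$ inside $C_n$ means that two distinct translates of the axis of $W_n$ fellow-travel for distance $R$. By bounded geometry, this forces a segment of the trajectory of length roughly proportional to $R/\ell_\mu$ to be repeated, or inverse-repeated, at another time window. The probability of such a coincidence between disjoint windows decays exponentially in $R$, via a deviation or entropy estimate: the walk's hitting distribution is nonatomic with exponential decay of cylinder masses. A union bound over $O(n^2)$ pairs of windows, with $R=\delta n$, then gives a summable bound. Borel--Cantelli yields $\rhoSigned(C_n)\le\delta n$ eventually, for every $\delta>0$, and feeding this into the displayed inequality completes the proof.
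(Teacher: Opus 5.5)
Your architecture matches the paper's: an eventual linear lower bound on $|C_n|_A$, almost sure convergence of $\eta_{C_n}/|C_n|_A$ to a filling current in $\mathcal S_A$, sublinear $\rhoSigned(C_n)$ via a matching estimate and Borel--Cantelli, and then Proposition~\ref{prop:finite-frequency-estimate} with $q=1$.

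The difference is where the three probabilistic inputs come from. The paper imports them: the Maher--Tiozzo bound $\Pr(\|W_n\|_A\le Ln)\le Kc^n$, the filling limit current of Kap26 (Theorem~D(1)), and the Maher--Sisto matching estimate. You instead propose to derive them inside $F_m$ from classical tools (Kingman, Birkhoff, Green-function decay from Kesten). You would build $\nu_\mu$ as expected cylinder counts per step, and get fillingness from full support plus Corollary~1.6 of KL09, exactly as in Lemma~\ref{lem:uniform-current-filling}. Your route is self-contained and describes the limit current explicitly. The paper's route is much shorter and rests on results valid far more generally.

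If you carry your route out, several steps need tightening.

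\textbf{Step 1.} The law of $(\xi_-,\xi_+)$ is a probability measure on $\dd F_m$; it does not push forward to a current. What works is your alternative: Birkhoff's theorem applied to the bounded additive cocycle counting occurrences of $v$ on the bi-infinite geodesic between the projections of $W_k$ and $W_{k+1}$.

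\textbf{Cyclic cancellation.} Convergence of $W_n$ to a boundary point says nothing about $W_n^{-1}$. So $|u_n|=(W_n\mid W_n^{-1})_1=o(n)$ needs an exponential tail estimate plus Borel--Cantelli. In fact only $|C_n|_A\ge Ln$ eventually is needed, which is exactly what the paper takes from Maher--Tiozzo.

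\textbf{Step 2.} Positive frequency of a spelled word $v$ requires that it survive in the limiting geodesic. For this you need that the forward and backward harmonic measures charge every cone, which holds because $\supp(\mu)$ and $\supp(\mu)^{-1}$ both generate $F_m$ as semigroups.

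\textbf{Step 3.} A match of geodesic labels does not force the increments in two time windows to repeat. Also, the geodesic segment read during a time window is not a function of that window alone. The estimate must be made on labels, as follows.
\begin{itemize}
\item At the time the walk passes within the step bound of the start of the later segment, the earlier segment is already determined by the past.
\item The future walk must then enter a prescribed cone at distance about $R$, which has probability $O(e^{-cR})$.
\item A union bound over times and positions gives $O(n^2e^{-cR})$.
\end{itemize}
Before this, a signed overlap must be converted into two disjoint matched subsegments of length at least $aR$ of the middle copy of $C_n$ in $[1,W_n]$, as the paper does. This is needed because the two windows may overlap (periodic stretches such as $a^{R+1}$) or wrap around the cut point of $C_n$.

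\textbf{Admissibility.} Admissibility must be obtained from $\rhoSigned(C_n)<\infty$ via Lemma~\ref{lem:rho-signed-admissible} before you apply \eqref{eq:finite-frequency-Delta}. It cannot be deduced afterwards from \eqref{eq:Delta-implies-stability}.
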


\begin{proof}
Consider the standard action of $F_m$ on its Cayley tree $T_A$.  The hypothesis on $\supp(\mu)$ makes this action nonelementary for the random walk, and $\mu$ is admissible in the sense of~\cite[Definition~2.4]{MS19}; its support has bounded image since it is finite.  By Maher--Tiozzo~\cite[Theorem~1.4]{MT18}, there are constants $L>0$, $K>0$, and $0<c<1$ such that
\[
 \Pr\bigl(\|W_n\|_A\le Ln\bigr)\le Kc^n.
\]
Since $\|W_n\|_A=|C_n|_A$, Borel--Cantelli gives
\begin{equation}\label{eq:rw-linear-cyclic-length}
 |C_n|_A\ge Ln
\end{equation}
eventually almost surely.

By~\cite[Theorem~D(1)]{Kap26}, there is a filling current $\nu$ to which the random walk is adapted; thus, for almost every trajectory,
\[
 [\eta_{W_n}]=[\eta_{C_n}]\longrightarrow[\nu]
 \qquad\text{in }\PCurr(F_m).
\]
Normalizing by $T_A$-length therefore gives
\begin{equation}\label{eq:rw-current-convergence}
 \frac{\eta_{C_n}}{|C_n|_A}
 \longrightarrow
 \bar\nu:=\frac{\nu}{\langle T_A,\nu\rangle}
 \qquad\text{in }\mathcal S_A
\end{equation}
almost surely, where $\bar\nu$ is filling.

We next use the matching estimates of Maher--Sisto.  Let $\gamma_n=[1,W_n]$ in $T_A$.  Proposition~3.2.2 of~\cite{MS19} says that a self-match of any fixed positive linear size has probability tending to zero.  We use its standard quantitative form in the bounded-support case: tracing its proof through the exponential matching estimate~\cite[Proposition~3.2.1]{MS19} (see also the proof of~\cite[Lemma~5.26]{CM15}) gives, for each fixed positive linear matching threshold, summable failure probabilities in $n$.  Thus Borel--Cantelli applies to these matching events.

For completeness, we indicate why this controls the overlap used here.  Write the freely reduced form of $W_n$ as $U_nC_nU_n^{-1}$, choosing $C_n$ to be the resulting cyclic reduction.  The middle copy of $C_n$ is a fundamental segment of the axis of $W_n$ and is contained in $\gamma_n$.  By Definition~\ref{def:rho-signed}, an inequality
\[
 \rhoSigned(C_n)\ge R,
 \qquad 0<R\le |C_n|_A/4,
\]
including the case $\rhoSigned(C_n)=\infty$, gives two distinct signed periodic origins agreeing for length $R$.  Translating the corresponding axis segments by powers of $W_n$ into a fundamental segment and trimming at cyclic cut points produces two disjoint subsegments of the middle copy of $C_n$, of length at least $aR$, which are translates of one another, possibly with opposite orientations, for an absolute constant $a>0$.  Hence $\gamma_n$ has an $(aR,0)$-match in the terminology of~\cite{MS19}.

Put $B=\max\{|s|_A:s\in\supp(\mu)\}$.  Then $|\gamma_n|\le Bn$, while~\eqref{eq:rw-linear-cyclic-length} gives $|C_n|_A\ge Ln$ eventually almost surely.  Therefore, for every fixed $0<\epsilon<1/4$, the event
\[
 \rhoSigned(C_n)\ge\epsilon |C_n|_A
\]
eventually implies that $\gamma_n$ has a self-match of length at least $(a\epsilon L/B)|\gamma_n|$.  The summable matching estimate and Borel--Cantelli, applied to rational $\epsilon>0$, now give
\begin{equation}\label{eq:rw-sublinear-overlap}
 \frac{\rhoSigned(C_n)}{|C_n|_A}\longrightarrow0
\end{equation}
almost surely.  In particular, $\rhoSigned(C_n)<\infty$ eventually, so Lemma~\ref{lem:rho-signed-admissible} implies that $C_n$ is root-free eventually.

Fix a trajectory for which~\eqref{eq:rw-linear-cyclic-length}--\eqref{eq:rw-sublinear-overlap} hold.  Apply Proposition~\ref{prop:finite-frequency-estimate} with $q=1$ and the filling current $\bar\nu$.  By~\eqref{eq:rw-current-convergence}, its finite-frequency hypothesis holds for all sufficiently large $n$.  Hence, for some $c_0>0$ independent of $n$,
\[
 \Delta_A(C_n)
 \le
 \frac{2\bigl(\rhoSigned(C_n)+2m\bigr)}{c_0|C_n|_A}
 \longrightarrow0.
\]
The final assertion follows from the definition of $\Delta_A$ and $\lambda$-stability.
\end{proof}

\section{Generic Nielsen and Whitehead input}\label{sec:generic-input}

\subsection{Whitehead's algorithm and Whitehead automorphisms}
 We first briefly recall, mainly following~\cite[Proposition~1.2 and Definition~4.2]{KSS06}, the form of Whitehead's algorithm for automorphic equivalence of conjugacy classes that will be used below. See also \cite{LS77} for a detailed treatment of the topic.   Put $\Sigma=A^{\pm1}$.  A \emph{Whitehead automorphism} is an automorphism $\Theta\in\Aut(F_m)$ of one of two types.  A Whitehead automorphism of the \emph{first kind}, or a \emph{relabeling automorphism}, permutes $\Sigma$ and commutes with inversion; these are precisely the elements of $\Rel(A)$.  A Whitehead automorphism of the \emph{second kind} has a multiplier $a\in\Sigma$ such that, for every $x\in\Sigma$,
\[
 \Theta(x)\in\{x,xa,a^{-1}x,a^{-1}xa\};
\]
necessarily $\Theta(a)=a$.  For fixed $m\ge 1$ there are only finitely many Whitehead automorphisms, and they generate $\Aut(F_m)$.

Given two cyclically reduced words $u,v$ in $F_m$, Whitehead's algorithm decides whether there exists $\Phi\in \Aut(F_m)$ such that $\Phi([u])=[v]$. 
For a cyclically reduced word $u$, call $[u]$ \emph{minimal} in its automorphic orbit if $\|\Phi(u)\|_A\ge ||u||_A$ for every $\Phi\in\Aut(F_m)$.  Whitehead's length-reduction theorem says that if $[u]$ is not minimal, then some Whitehead automorphism $\Theta$ strictly decreases its cyclic length, $||\Theta(u)||_A<||u||_A$.  Thus one cyclically reduces and repeatedly applies length-decreasing Whitehead automorphisms until a minimal representative is reached.  Given two conjugacy classes, one first minimizes both; unequal minimal lengths rule out automorphic equivalence.  If the minimal lengths agree, Whitehead's peak-reduction theorem says that the two classes are automorphically equivalent exactly when their minimal representatives can be joined by a finite sequence of Whitehead automorphisms along which cyclic length remains constant.  Since there are only finitely many cyclic words of a fixed length, this procedure requires only a finite search.  In the terminology of~\cite{KSS06}, minimization is the ``easy part'' of Whitehead's algorithm (which runs in $O(|u|_A^2)$ time), while the length-preserving search is the ``hard part'' requiring a priori exponential time in $\max\{|u|_A,|v|_A\}$). The search form records the Whitehead moves and hence produces an explicit automorphism when the classes are equivalent.  Strict minimality, recalled below, makes this second stage especially simple.

\subsection{Nielsen uniqueness and polynomial recognition}

We first record a small algorithmic refinement of the readability conditions used in the Arzhantseva--Ol'shanskii method.  The original complexity estimate in~\cite{KS05} is exponential.  The point below is that, when the rank bound is fixed, a witness graph has uniformly bounded topological complexity, so the labels of its maximal arcs may be chosen among subwords of the input word.  This is analogous to the subword enumeration used for the quartic recognition algorithm in~\cite[Lemma~3.10]{KS09}; we will only need a polynomial bound here.

Here an $A$-labelled graph is an oriented graph whose oriented edges have labels in $A^{\pm1}$, with the reverse orientation carrying the inverse label.  Edge counts below refer to unoriented edges, and the label of a path is obtained by reading its oriented edge labels in order.

\begin{lem}[Polynomial recognition of fixed-rank readability]\label{lem:readability-poly}
Fix $m\ge 2$, an integer $R\ge 1$, and a rational number $0<\mu\le1$.  Let $w$ be a nontrivial freely reduced word over $A^{\pm1}$, of length $\ell$.  Each of the following conditions is decidable in deterministic time polynomial in $\ell$:
\begin{enumerate}[(1)]
\item there exists a finite connected $A$-labelled graph $\Gamma$ with at most $\mu\ell$ unoriented edges and $\rank\pi_1(\Gamma)\le R$ in which $w$ labels a reduced path;
\item there exists such a graph $\Gamma$ which, in addition, has a vertex of degree strictly less than $2m$.
\item For fixed parameters, the $\mu$-readability and $(\mu,L)$-readability conditions of~\cite[Definitions~4.1--4.2]{KS05} are polynomial-time decidable.
\end{enumerate}
\end{lem}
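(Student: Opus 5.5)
Reduce each condition to a search over boundedly many combinatorial types, each checkable by labels drawn from subwords of $w$. Let $\Gamma$ be a finite connected $A$-labelled graph in which $w$ labels a reduced path $\gamma$. Replace $\Gamma$ by the union $\Gamma_0$ of the edges traversed by $\gamma$. This only removes edges and does not increase rank. For condition (1) it still witnesses the property. For condition (2), passing to $\Gamma_0$ can only lower degrees unless $\Gamma_0=\Gamma$; in either case we may require a vertex of degree $<2m$. We may also assume $\Gamma$ is folded, since folding preserves readability of reduced paths, decreases edge count, does not increase rank, and keeps degrees at most $2m$.

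\textbf{Topological type.} Since $\gamma$ covers $\Gamma_0$, the core has rank at most $R$. Its topological type, obtained by forgetting valence-two vertices, is a graph with at most $3R-3$ essential vertices plus the two endpoints of $\gamma$. So it has at most $3R+1$ topological edges, and there are only boundedly many such types. Each topological edge $e$ is a maximal arc labelled by a reduced word $u_e$. Because $\gamma$ traverses every edge, each $u_e^{\pm1}$ is a subword of $w$. There are $O(\ell^2)$ subwords, so there are $O(\ell^{2(3R+1)})$ assignments of labels to topological edges. This is polynomial in $\ell$ for fixed $R$.

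\textbf{Checking a candidate.} For each type and label assignment, build the graph, check that it is folded at branch vertices, and compute its edge count $\sum|u_e|$; compare this with $\mu\ell$ using rational arithmetic. Then decide whether $w$ labels a reduced path in it. This is a reachability question in the product of the graph with the prefix automaton of $w$: a state records a vertex, the number of letters of $w$ read so far, and the last edge used, to forbid backtracking. The product has $O(\ell^2)$ states, so the check takes polynomial time. For (2), additionally compute vertex degrees. One subtlety is that a vertex may have degree less than $2m$ in $\Gamma_0$ while the original $\Gamma$ was larger. Since (2) asks for existence of such a graph with the stated bounds, and $\Gamma_0$ itself qualifies when it has a low-degree vertex, it suffices to search over cores of $\gamma$.

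\textbf{Main obstacle.} The step I expect to be hardest is the one just raised: checking that passing to $\Gamma_0$ and folding preserves the degree condition, or arguing directly that the low-degree vertex can be taken inside the traced subgraph. If $\Gamma_0\ne\Gamma$, the trimmed subgraph has a vertex losing an edge, hence of degree $<2m$ in $\Gamma_0$. Otherwise $\Gamma_0=\Gamma$ and nothing changes.

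\textbf{Condition (3).} The $\mu$- and $(\mu,L)$-readability conditions of \cite[Definitions~4.1--4.2]{KS05} quantify over finitely many fixed-rank graph bounds applied to $w$ or to subwords of $w$ of length at least proportional to $\ell$. There are $O(\ell^2)$ such subwords, and by (1)–(2) each instance is polynomial. Hence these conditions are decidable in polynomial time as well.
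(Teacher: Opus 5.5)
Your argument for (1) is essentially the paper's: trim to the traversed subgraph, bound the topological type by at most $3R+1$ arcs, take arc labels among the $O(\ell^2)$ subwords of $w^{\pm1}$, and run a polynomial readability check. For (1), your extra reduction to folded graphs is harmless.

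The gap is in (2), at the step ``we may also assume $\Gamma$ is folded''. The paper's $A$-labelled graphs need not be folded, and folding does not preserve the existence of a vertex of degree $<2m$. For example, the rose with loops $a_1,\dots,a_m$ plus one pendant edge labelled $a_1$ has a vertex of degree $1$, but it folds onto the rose, whose only vertex has degree $2m$. Your resolution of the ``main obstacle'' (a vertex of $\Gamma_0$ that loses an edge has degree $<2m$) uses $\deg_\Gamma\le 2m$, i.e.\ that $\Gamma$ is already folded. So it does not cover this situation, and your search really is incomplete.

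Concretely, take $m=2$, $R=2$, $w=a^4b^4ababa$ (so $\ell=13$) and $\mu=3/13$. The subwords $a^4$ and $b^4$ force a directed $a$-cycle and a directed $b$-cycle, each of length at most $2$. One checks that the only folded graph with at most $3$ edges in which $w$ is readable is the rose:
\begin{itemize}
\item no edge can be added to the rose, or used to join an $a$-loop to a $b$-loop at another vertex, without creating a fold;
\item an $a$-loop with a $2$-cycle labelled $b$ cannot read $aba$;
\item a $b$-loop with a $2$-cycle labelled $a$ cannot read $bab$.
\end{itemize}
The rose has no vertex of degree $<4$, so your algorithm rejects. Yet the rose with a pendant edge has $3\le\mu\ell$ edges, rank $2$, a vertex of degree one, and reads $w$ along a reduced path.

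The missing idea is the paper's edge-count slack. Suppose a witness $\Gamma$ for (2) has traversed subgraph $H$ with no vertex of degree $<2m$. Then every low-degree vertex of $\Gamma$ lies outside $H$, so connectivity gives $|E(H)|+1\le|E(\Gamma)|\le\mu\ell$. Conversely, any candidate with this slack becomes a witness after attaching a pendant edge with any label. Hence for each candidate $H$ you must accept if $H$ has a vertex of degree $<2m$ \emph{or} $|E(H)|+1\le\mu\ell$. If you prefer to keep folding, the same repair works, because a nontrivial fold lowers the edge count by one: accept a folded core if it has a vertex of degree $<2m$ or at most $\mu\ell-1$ edges.

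The gap carries over to your (3). In \cite{KS05}, $(\mu,L)$-readability is exactly condition (2) with rank bound $L$, and $\mu$-readability is (1) with rank bound $m-1$. The quantification over long subwords belongs to the class definition, not to these readability definitions.

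A minor slip: when both endpoints of $\gamma$ are leaves, $\Gamma_0$ can have up to $2R$ branch vertices (already two when $R=1$), not $3R-3$. Your bound of $3R+1$ topological edges is still correct.
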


\begin{proof}
Suppose first that a graph $\Gamma$ as in part~\textup{(1)} exists, and let $p$ be a reduced path labelled by $w$.  Replace $\Gamma$ by the connected subgraph $H$ consisting of the edges traversed by $p$.  This does not increase the number of edges or the rank, and every edge of $H$ is traversed by $p$.

Every degree-one vertex of $H$ is an endpoint of $p$: otherwise $p$ would have to enter that vertex and immediately traverse the same edge backwards.  Thus $H$ has at most two degree-one vertices.  If $r=\rank\pi_1(H)\le R$, the identity
\[
 \sum_{x\in V(H)}(\deg(x)-2)=2r-2
\]
shows that the number of vertices of degree at least three is at most $2R$.  Mark the initial and terminal vertices of $p$ and suppress every other degree-two vertex.  The resulting topological graph therefore has a number of vertices and edges bounded by a constant depending only on $R$; for example, it has at most $2R+2$ marked or branching vertices and at most $3R+1$ topological edges.

Choose an orientation on each topological edge.  Once the reduced path $p$ enters the interior of a maximal arc (a path whose internal vertices have degree two), it must continue along that arc until it reaches an endpoint of the arc; the initial and terminal vertices of $p$ have already been marked.  Since every edge of $H$ is traversed by $p$, the label of every oriented topological edge is therefore a nonempty subword of $w$ or $w^{-1}$.  There are $O(\ell^2)$ possible such labels.  Since there are only finitely many topological graph types with marked initial and terminal vertices, and only a bounded number of topological edges depending on $R$ alone, all possible labelled graphs $H$ can therefore be enumerated in polynomial time.

For each candidate, expand the topological edges and check the edge and rank bounds.  A finite-state dynamic program, propagating the possible current vertices while the letters of $w$ are read, decides in polynomial time whether $w$ labels a path in the candidate.  Because $w$ is freely reduced, any path with label $w$ is automatically reduced.  The expanded candidate has $O(\ell)$ edges, so each candidate is checked in polynomial time.  This proves part~\textup{(1)}.

For part~\textup{(2)}, again let $H$ be the union of the edges traversed by $p$.  If $H$ already has a vertex of degree less than $2m$, the preceding enumeration applies unchanged.  Otherwise the required low-degree vertex of $\Gamma$ lies outside $H$, so $H$ is a proper subgraph of the connected graph $\Gamma$.  Hence at least one edge of $\Gamma\setminus H$ is incident to $H$, and therefore
\[
 |E(H)|+1\le |E(\Gamma)|\le\mu\ell.
\]
Attach one pendant edge, with any label, to an arbitrary vertex of $H$.  The resulting connected labelled graph has the same rank as $H$, has at most $\mu\ell$ edges, still contains the path labelled $w$, and has a degree-one vertex.  Thus it suffices, for each candidate $H$ from part~\textup{(1)}, to check either that $H$ already has a vertex of degree less than $2m$ or that the edge bound allows one additional pendant edge.  This is again polynomial.

The definitions of $\mu$-readability and $(\mu,L)$-readability in~\cite{KS05} are precisely the two cases above with fixed rank bounds $m-1$ and $L$, respectively.  In the latter definition the path is not explicitly required to be reduced, but this is automatic when its label is the freely reduced word $w$.
\end{proof}

\begin{thm}[Kapovich--Schupp, with polynomial recognition]\label{thm:KS-input}
Fix $m\ge 2$ and $q\ge 1$.  There are subsets
\[
 \mathcal P_{m,q}(n)\subseteq\CR_n^q
 \qquad(n\ge 1)
\]
and constants $C,c>0$ such that:
\begin{enumerate}[(1)]
\item For every $n\ge 1$,
\[
 \frac{|\mathcal P_{m,q}(n)|}{|\CR_n|^q}
 \ge 1-Ce^{-cn}.
\]
\item For every $n\ge 1$, membership in $\mathcal P_{m,q}(n)$ is decidable in deterministic polynomial time in $n$.
\item For every $n\ge 1$ and every $R=(r_1,\dots,r_q)\in\mathcal P_{m,q}(n)$, put
\[
 G_R=\langle a_1,\dots,a_m\mid r_1,\dots,r_q\rangle.
\]
Then $G_R$ is torsion-free, one-ended, and non-elementary hyperbolic, every subgroup generated by at most $m-1$ elements is free, and every $m$-tuple generating a nonfree subgroup is Nielsen-equivalent to the standard tuple $(\ol a_1,\dots,\ol a_m)$, where $\ol a_i$ denotes the image of $a_i$ in $G_R$.
\item For every $n\ge 1$ and every $R\in\mathcal P_{m,q}(n)$, with $G_R$ as in part~\textup{(3)}, every nonfree subgroup of $G_R$ generated by at most $m$ elements is all of $G_R$, every automorphism of $G_R$ lifts to an automorphism of $F_m$, and $G_R$ is co-Hopfian.
\end{enumerate}
\end{thm}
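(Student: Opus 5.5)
We will take $\mathcal P_{m,q}(n)$ to be the class defined in \cite{KS05} by the Arzhantseva--Ol'shanskii-type conditions: the tuple satisfies $C'(\mu_0)$ for a small fixed rational $\mu_0$ (say $\mu_0\le 1/100$), and no relator (and no long subword of a relator, in the form used there) is $\mu$-readable or $(\mu,L)$-readable, with rational $\mu$ and integer $L$ chosen once and for all depending on $m,q$. Parts~(1), (3) and (4) are then the content of the main theorems of \cite{KS05}. Exponential genericity is proved there by a counting argument over graphs of bounded rank and size at most $\mu n$. Torsion-freeness, hyperbolicity and non-elementarity follow from $C'(1/6)$ with root-free relators. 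The Nielsen uniqueness statement and the freeness of $(m-1)$-generated subgroups come from the Arzhantseva--Ol'shanskii folding/Stallings-graph argument: a non-free subgroup generated by at most $m$ elements would force a van Kampen diagram whose boundary relator is read in a small graph, unless that graph is the full rose.

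The statements of part~(4) follow from part~(3). One-endedness and co-Hopficity are proved in \cite{KS05} (co-Hopficity uses one-endedness, torsion-freeness and hyperbolicity via Sela's theorem). For the lifting of automorphisms, if $\alpha\in\Aut(G_R)$, then $(\alpha(\ol a_1),\dots,\alpha(\ol a_m))$ generates $G_R$, which is non-free. By Nielsen uniqueness this tuple is Nielsen equivalent to the standard one. Realizing the Nielsen moves in $F_m$ gives $\Phi\in\Aut(F_m)$ inducing $\alpha$. Any non-free subgroup generated by at most $m$ elements has a generating $m$-tuple (pad with $1$ if needed; a tuple containing $1$ generates a subgroup on $\le m-1$ elements, hence free). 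That tuple is Nielsen equivalent to the standard tuple, so the subgroup is all of $G_R$.

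The genuinely new content is part~(2), polynomial-time decidability. $C'(\mu_0)$ is checked in $O(n^2)$ time by comparing all cyclic shifts of $r_i^{\pm1}$. Each readability condition is exactly one of the two cases of Lemma~\ref{lem:readability-poly}, with rank bound $m-1$ or $L$ and rational $\mu$. The condition is applied to each relator $r_i$ (and, if the definition of \cite{KS05} quantifies over subwords of length proportional to $n$, to the $O(n^2)$ subwords of cyclic shifts). Each check is polynomial by Lemma~\ref{lem:readability-poly}, and a polynomial number of polynomial checks is polynomial.

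The main obstacle is bookkeeping: matching the precise definition of the generic class in \cite{KS05} with the two readability forms in Lemma~\ref{lem:readability-poly}. In particular, we must confirm that every quantifier in that definition ranges over polynomially many words (relators, cyclic shifts, subwords), rather than over, say, all graphs. If some condition there is phrased for arbitrary words in the normal closure, I would first try to replace the class by the slightly smaller class cut out by these finitely many word-level conditions. Genericity of that smaller class follows from the same counting estimate in \cite{KS05}, and the conclusions of part~(3) are exactly what that paper derives from these word-level conditions.
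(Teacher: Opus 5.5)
Your plan follows the paper's proof. You take the Kapovich--Schupp class of \cite{KS05} and quote \cite[Theorem~B]{KS05} for part~(3). You derive part~(4) from Nielsen uniqueness. You get part~(2) by applying Lemma~\ref{lem:readability-poly} to the polynomially many cyclic subwords of length at least half a relator, plus the $C'$ test. The class also requires absence of proper-power relators; you did not list that test, but it is trivially polynomial.

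The one step you skip is in part~(1). \cite{KS05} proves exponential genericity in the cumulative few-relator model, where each component has length at most $n$. Part~(1) concerns the equal-length sphere $\CR_n^q$, so it is not literally a theorem of \cite{KS05}. The transfer is short but has to be made. The bad equal-length tuples form a subset of the bad cumulative tuples. Moreover, by \eqref{eq:number-cyclically-reduced},
\[
 B_n^{\mathrm{cr}}=\sum_{1\le\ell\le n}|\CR_\ell|\asymp|\CR_n|\asymp(2m-1)^n .
\]
Hence
\[
 |\CR_n^q\setminus\mathcal P_{m,q}(n)|\le Ce^{-cn}(B_n^{\mathrm{cr}})^q\le C'e^{-cn}|\CR_n|^q .
\]

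Two smaller points.
\begin{itemize}
\item Co-Hopficity does not need Sela's theorem or a citation. You already showed that every nonfree subgroup generated by at most $m$ elements is all of $G_R$. The image of an injective endomorphism is isomorphic to $G_R$, hence nonfree, and it is generated by the $m$ images of the $\ol a_i$. So the endomorphism is surjective; this is the paper's argument.
\item Non-elementarity does not follow from $C'(1/6)$ plus root-freeness: $\langle a,b\mid b\rangle$ satisfies both and presents $\mathbb Z$. Here it comes from one-endedness.
\end{itemize}
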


\begin{proof}
Choose fixed rational parameters $\lambda,\mu$ and an integer $L$ as in the proof of~\cite[Theorem~B]{KS05}, and take the corresponding Kapovich--Schupp class.  The group-theoretic conclusions are those of~\cite[Theorem~B]{KS05}.  Their theorem is stated in the cumulative few-relator model.  If
\[
 B_n^{\mathrm{cr}}=\sum_{1\le\ell\le n}|\CR_\ell|,
\]
then \eqref{eq:number-cyclically-reduced} gives $B_n^{\mathrm{cr}}\asymp|\CR_n|\asymp(2m-1)^n$.  The bad equal-length tuples lie in the bad cumulative set, while the cumulative ambient set has size $(B_n^{\mathrm{cr}})^q\asymp|\CR_n|^q$; hence exponential genericity transfers to the equal-length sphere.

It remains only to improve the recognition bound stated in~\cite{KS05}.  By~\cite[Definition~4.3]{KS05}, membership in the relevant class requires the $C'(\lambda)$ condition, absence of proper-power relators, and the requirement that every subword of a cyclic conjugate of a relator having length at least half that relator length be neither $\mu$-readable nor $(\mu,L)$-readable.  Small cancellation and proper-power detection are polynomial-time decidable.  There are only polynomially many relevant cyclic subwords, and Lemma~\ref{lem:readability-poly} tests each readability condition in polynomial time, with degree depending only on the fixed parameters.  Thus membership in $\mathcal P_{m,q}(n)$ is polynomial-time decidable.  This is the same finite-subword principle that yields the explicit quartic recognition bound in the modular-group setting of~\cite[Theorem~A and Lemma~3.10]{KS09}; no particular polynomial degree is needed here.

Part~\textup{(4)} follows as in~\cite{KS05}: Nielsen moves preserve generated subgroups, automorphisms send the standard tuple to generating tuples, and an injective endomorphism has nonfree image generated by at most $m$ elements.
\end{proof}

\subsection{Generic Whitehead input}

A cyclically reduced word $w$ is \emph{strictly minimal} if every non-inner Whitehead automorphism of the second kind strictly increases its cyclic length.  Let $\mathcal Z_m$ be the class denoted $Z$ in~\cite[Definition~8.6]{KSS06}.  Thus its elements are strictly minimal and root-free, no nontrivial relabeling sends one to a conjugate of itself, and no relabeling sends one to a conjugate of its inverse. The following result is due to Kapovich--Schupp--Shpilrain~\cite{KSS06}:

\begin{thm}[Kapovich--Schupp--Shpilrain]\label{thm:KSS-input}
Fix $m\ge 2$.  Then the following hold.
\begin{enumerate}[(1)]
\item The subsets $\mathcal Z_m\cap\CR_n$ are exponentially generic in $\CR_n$ as $n\to\infty$, and membership in $\mathcal Z_m$ is decidable in linear time.
\item If $w\in\mathcal Z_m$, then
\[
 \Stab_{\Out(F_m)}([w])=\{1\}.
\]
\item If $u$ is strictly minimal and $v$ is cyclically reduced with $|u|=|v|$, then $[u]$ and $[v]$ lie in the same $\Out(F_m)$-orbit if and only if a relabeling automorphism sends $u$ to a cyclic shift of $v$.
\item On any pair of cyclically reduced input words for which at least one input lies in the automorphic orbit of a strictly minimal word, Whitehead's algorithm, in search form, runs in at most quadratic time in the total input length.  If both inputs are strictly minimal, orbit testing is linear.
\end{enumerate}
\end{thm}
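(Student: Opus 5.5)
The plan is to derive the statement from Kapovich--Schupp--Shpilrain~\cite{KSS06}, adapting it to the equal-length sphere model and to the $\Out(F_m)$ language used here. Parts~(2)--(4) all rest on one combinatorial consequence of Whitehead's peak-reduction theorem, so I would prove that first.

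\textbf{Key lemma.} Let $u$ be strictly minimal, and let $v$ be cyclically reduced with $\|v\|_A=|u|$ and $[v]$ in the $\Aut(F_m)$-orbit of $[u]$. Then:
\begin{enumerate}[(i)]
\item $[u]$ is minimal, since any shorter image would yield, by the length-reduction theorem, a length-decreasing Whitehead automorphism, contradicting strict minimality. Hence $[v]$ is minimal as well.
\item Peak reduction gives a chain $[u]=[u_0],[u_1],\dots,[u_k]=[v]$ of Whitehead moves, all preserving cyclic length.
\end{enumerate}
I would show by induction that every $[u_j]$ is a relabeling image of $[u]$ and is itself strictly minimal. The point is that strict minimality is invariant under relabelings, because conjugating a second-kind Whitehead automorphism by a relabeling gives another one, and inner ones stay inner. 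Consequently each length-preserving move out of $[u_j]$ is either a relabeling or an inner second-kind automorphism. An inner second-kind automorphism is conjugation by the multiplier, so it fixes the conjugacy class. Composing along the chain produces $\Theta\in\Rel(A)$ with $\Theta([u])=[v]$, that is, $\Theta(u)$ is a cyclic shift of $v$; this is part~(3).

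\textbf{Part (2).} If $\Phi\in\Aut(F_m)$ fixes $[w]$ with $w\in\mathcal Z_m$, I would run the same argument on the full Whitehead factorization of $\Phi$. Peak reduction applied to the automorphism itself (not just the orbit) rewrites $\Phi$, modulo $\mathrm{Inn}(F_m)$, as a product of length-preserving moves from $[w]$. The lemma then shows $\Phi=\Theta\circ\iota$ with $\Theta\in\Rel(A)$ fixing $[w]$ and $\iota$ inner. The definition of $\mathcal Z_m$ excludes every nontrivial relabeling fixing $[w]$, so $\Theta=1$ and $\phi=1$ in $\Out(F_m)$. I expect this step to be the main obstacle: peak reduction must control the automorphism and not merely the orbit, including the automorphisms that stabilize $[w]$ with constant length. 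I would cite the corresponding stabilizer statement in~\cite[Section~8]{KSS06} rather than re-derive McCool's version of peak reduction.

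\textbf{Part (1).} This is~\cite[Theorem~8.7 and the surrounding discussion]{KSS06}: generic strict minimality comes from the large-deviation estimate on two-letter frequencies, and the relabeling conditions from comparing the finitely many relabeled cyclic words. If their statement is in the ball or cumulative model, I would transfer exponential genericity to $\CR_n$ exactly as in the proof of Theorem~\ref{thm:KS-input}, since $|\CR_n|\asymp(2m-1)^n$. Linear-time membership holds because the following checks are all linear:
\begin{enumerate}[(a)]
\item strict minimality, by computing the Whitehead graph (two-letter cyclic counts) and evaluating the finitely many length-change formulas;
\item root-freeness, by checking whether $u$ occurs in $uu$ at a nontrivial shift;
\item cyclic-shift equality with $\Theta(u)^{\pm1}$ for the finitely many $\Theta\in\Rel(A)$, by linear string matching.
\end{enumerate}

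\textbf{Part (4).} Run the easy part of Whitehead's algorithm on the input that is not known to be minimal. This costs $O(N^2)$ time and produces a minimal representative together with the Whitehead moves used. By part~(3), the orbit test then reduces to two checks: equality of the minimal lengths, and finitely many linear-time cyclic-shift comparisons against relabelings. When both inputs are already strictly minimal, only the linear comparison step is needed.
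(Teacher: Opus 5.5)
Your proposal is correct, and it reaches the statement by a different route from the paper's, whose proof is purely by citation. The paper uses:
\begin{enumerate}[(i)]
\item Theorem~B, Theorem~8.5 and Proposition~8.7 of \cite{KSS06} for genericity and the trivial outer stabilizer;
\item the comparability $|\CR_n|\asymp(2m-1)^n$ to pass from the cumulative model to the sphere;
\item a finite list of linear checks for membership in $\mathcal Z_m$;
\item Theorem~A(3)--(5) of \cite{KSS06} for the orbit and complexity claims.
\end{enumerate}

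You instead rebuild parts~(2)--(4) from classical Whitehead theory. Strict minimality is relabeling-invariant, and combined with length reduction and peak reduction this shows that every length-preserving Whitehead move out of a strictly minimal class is a relabeling or a conjugation by a letter. That gives~(3), and~(4) follows from~(3).

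For~(2) you rightly see that the orbit form of peak reduction is not enough and that you need the Higgins--Lyndon/McCool form for automorphisms. With that form your argument is already complete, so you need not fall back on \cite{KSS06}. Take a peak-reduced Whitehead factorization of a stabilizing $\Phi$. The images of $[w]$ under its partial products must all have the minimal length, so every vertex is a relabeling of $[w]$ and every edge is a relabeling or an inner move. Hence $\Phi=\Theta\circ\iota$ with $\Theta\in\Rel(A)$ fixing $[w]$, and $\Theta=1$ by the definition of $\mathcal Z_m$.

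What your route buys is clarity about the dependence: only part~(1) is genuinely probabilistic input from \cite{KSS06}, while~(2)--(4) are deterministic consequences of strict minimality plus classical peak reduction. The paper's citation is shorter but hides this.

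One small repair is needed in~(4). The hypothesis is that one input lies in the automorphic orbit of a strictly minimal word, not that it is itself strictly minimal, so you should minimize both inputs, each in $O(N^2)$ time. The minimized form of the distinguished input has the minimal length in its orbit. By~(3) it is therefore a relabeled cyclic shift of the strictly minimal word, and so it is itself strictly minimal. Now~(3) applies to the two minimized words. The search-form automorphism is assembled from the two recorded minimization sequences together with the relabeling.
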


\begin{proof}
Theorem~B, Theorem~8.5, and Proposition~8.7 of~\cite{KSS06} give genericity and the stabilizer statement.  Sphere and cumulative exponential genericity are equivalent because $|\CR_n|\asymp(2m-1)^n$ and the cumulative ball has comparable size.  Linear membership tests strict minimality and finitely many relabeling and inverse-conjugacy conditions.  Theorem~A(3)--(5) gives the orbit and complexity claims; recording the Whitehead moves yields the search form.
\end{proof}

\section{Greendlinger normal-closure rigidity}\label{sec:normal-closure}

For a tuple $R=(r_1,\dots,r_q)$ of elements of a free group $F=F(X)$, denote
\[
 N_R=\Normal{r_1,\dots,r_q}\unlhd F,
\]
the normal closure of $R$ in $F$.
Similarly, for a set $\mathcal R\subseteq F$, write $N_{\mathcal R}=\Normal{\mathcal R}\unlhd F$.

We use a modern formulation, Theorem~\ref{thm:Greendlinger-normal} of Greendlinger's classic analogue~\cite{Gre61} of Magnus's theorem.  The original statement uses terminology predating standard small-cancellation language.  The formulation below follows from Greendlinger's argument; see also Proposition~6.5 of~\cite{KS09} for a similar statement for small-cancellation quotients of the modular group.

For completeness, we give a self-contained proof of Theorem~\ref{thm:Greendlinger-normal} using modern small cancellation theory tools.  We use the notation and terminology of Chapter~V of~\cite{LS77}.  There a planar map $M$ is called a \emph{$(3,6)$-map} if every vertex $v$ has degree $d(v)\ge3$ and every interior region $D$ has degree $d(D)\ge6$.  For a boundary region $D$, the quantity $i(D)$ denotes its interior degree, that is, the number of interior edges in the boundary cycle $\partial D$.  We use the standard combinatorial-disc convention that each region is a closed $2$-cell equipped with its boundary cycle; intersections and consecutive arcs below are understood on these boundary cycles.  To a van Kampen diagram $\Delta$ over a symmetrized presentation $\langle X\mid\mathcal R\rangle$, one associates a connected and simply connected planar map $M$ by suppressing degree-two vertices.  Whereas the oriented edges of $\Delta$ are labelled by elements of $X^{\pm1}$, the edges of $M$ are labelled by nontrivial freely reduced words in $F(X)$.

Lemma~2.2 of Chapter~V of~\cite{LS77} implies that the planar map associated to a reduced van Kampen diagram over a $C'(1/6)$ presentation is a $(3,6)$-map.

The following consequence is likely standard, but we include a proof because we have not found this precise formulation in the literature.
\begin{lem}[Region length versus boundary length]\label{lem:diagram}
Let $\Delta$ be a reduced van Kampen diagram over a $C'(1/6)$ presentation $\langle X\mid\mathcal R\rangle$.  Suppose that $\Delta$ contains at least two regions and has no vertices of degree one.  Let
\[
 w=w(\Delta)\in(X\cup X^{-1})^\ast
\]
be the word, not necessarily freely reduced, labelling the boundary cycle $\partial\Delta$.

Then, for every region $D$ of $\Delta$, if $\mu(D)$ is the label of $\partial D$, then
\[
 |\mu(D)|<|w(\Delta)|.
\]
\end{lem}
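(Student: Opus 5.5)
We prove the lemma by contradiction. Suppose some region $D$ has $|\mu(D)|\ge|w(\Delta)|$. Pass to the associated $(3,6)$-map $M$ (Lemma~2.2 of Chapter~V of~\cite{LS77}). Every edge of $\partial D$ is either an interior edge, hence a piece, or a boundary edge lying on $\partial\Delta$. By $C'(1/6)$, each interior edge of $\partial D$ has length less than $|\mu(D)|/6$. Let $e(D)$ be the total length of the boundary edges of $D$. Then
\[
 |\mu(D)|<e(D)+\frac{i(D)}{6}\,|\mu(D)|.
\]

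\textbf{The case $i(D)\le 5$.} If every piece were short relative to $D$, the boundary part of $D$ would have to be long. I would compare $e(D)$ with $|w|$ using the hypotheses that $\Delta$ has at least two regions and no degree-one vertices. Since there are at least two regions, $D$ is not all of $\Delta$. Hence $\partial\Delta$ contains edges not on $\partial D$, or else $D$ has interior edges.

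\textbf{Main step: Greendlinger's lemma.} The cleaner route is Greendlinger's lemma for $(3,6)$-maps (\cite[Chapter~V, Theorem~4.4]{LS77}), applied to each disc component of $M$. If $M$ is not a single region, it has at least two regions $D_1,D_2$ with $i(D_j)\le 3$, taking the disc-component subtleties into account. Each such $D_j$ then has a boundary arc of length greater than $|\mu(D_j)|/2$ lying on $\partial\Delta$. These outer arcs occupy disjoint portions of $\partial\Delta$, since distinct regions share only interior edges.

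\textbf{Applying it to the region $D$.}
\begin{enumerate}[(i)]
\item If $D$ is one of the $D_j$, then $|w|>|\mu(D)|/2+|\mu(D_{j'})|/2$ for the other region $D_{j'}$. This alone is insufficient, so I would instead argue with a refined count. Let $\Delta'$ be the diagram obtained by removing $D$, i.e.\ the diagram for the complement. The boundary of $\Delta$ is then obtained from that of $\Delta'$ by replacing at most $i(D)$ pieces of $D$ with $e(D)>|\mu(D)|(1-i(D)/6)$. Moreover, $\Delta'$ itself contributes outer boundary of length at least its own Greendlinger arc.
\item Concretely: $|w|\ge e(D)+(\text{boundary of }\Delta\text{ not on }D)$. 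By Greendlinger applied to the remaining diagram there is another region $D'$ with an outer arc longer than $|\mu(D')|/2$ not on $D$. Its pieces shared with $D$ are shorter than $|\mu(D)|/6$ each.
\end{enumerate}

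\textbf{The general case, and the main obstacle.} The general case, where $D$ is deep inside, is the real obstacle. I would handle it by induction on the number of regions, using the area/curvature form of the $(3,6)$ theory. The curvature formula $\sum_{\text{boundary regions}}(4-i(D))\ge 6$ lets us show that the boundary word must be longer than any single relator. The rough count is as follows. Around $D$, consider the regions adjacent to $D$; each contributes outer length. Summing the Greendlinger-type estimates yields that $|w|$ exceeds the perimeter of any region by taking the outermost layer. I expect this layer argument, and the careful treatment of non-disc (cut-vertex) components, to be the main difficulty. The fallback is to reduce to disc components, where the absence of degree-one vertices and the presence of at least two regions guarantee at least two Greendlinger regions.
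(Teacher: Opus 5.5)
Your proposal has a genuine gap: it never proves the inequality for a general region $D$. You correctly note that Greendlinger's lemma, or the curvature formula $\sum^*[4-i(D)]\ge 6$, gives two boundary regions $D_1,D_2$ with $1\le i(D_j)\le 3$. After that the argument stops short. In case (i) you remove $D$ itself, which yields no information about $|\mu(D)|$; you concede this is ``insufficient'' and the ``refined count'' is never completed. For a region deep inside $\Delta$ you offer only a ``layer argument'' (summing outer lengths of the regions adjacent to $D$). It is not carried out and does not work as stated: the neighbours of an interior region need not be boundary regions, and nothing in the sketch compares $|\mu(D)|$ with $|w(\Delta)|$. The contradiction set-up and the estimate $|\mu(D)|<e(D)+\tfrac{i(D)}{6}|\mu(D)|$ are likewise never used to reach a conclusion.

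The missing idea is an induction on the number $t$ of regions, in which you peel off a Greendlinger region \emph{other than} the one being estimated.
\begin{enumerate}[(1)]
\item Peeling step. If $i(D_1)\le 3$ and $\partial D_1\cap\partial\Delta$ is a single consecutive arc, the interior part of $\partial D_1$ has length $p_1<i(D_1)|\mu(D_1)|/6\le|\mu(D_1)|/2$. Hence the exterior part has length $e_1>p_1$. Deleting $D_1$ together with its open exterior arc leaves a reduced diagram $\Delta_1$ with $t-1$ regions, no degree-one vertices, and $|w(\Delta_1)|=|w(\Delta)|-e_1+p_1<|w(\Delta)|$.
\item Conclusion of the step. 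The inductive hypothesis gives $|\mu(D)|<|w(\Delta_1)|<|w(\Delta)|$ for every $D\ne D_1$, including the deep regions. Removing $D_2$ instead handles $D_1$ itself.
\item Base case $t=2$. Either the two regions share no edge, so $|w|\ge|\mu(D)|+|\mu(D')|$. Or they share a single arc labelled by a piece $v$, so $|w|=|\mu(D)|+|\mu(D')|-2|v|$ with $|v|<\min\{|\mu(D)|,|\mu(D')|\}/6$.
\item Non-disc case. Reduce to disc components by cutting at cut vertices. A component containing only one region needs separate treatment: there $|\mu(D)|\le|w(\Delta')|<|w(\Delta)|$ because the other components contribute boundary length. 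Your ``fallback'' overlooks this, since such a component has no two Greendlinger regions.
\end{enumerate}
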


\begin{proof}
We induct on the number $t$ of regions of $\Delta$.

Suppose first that $t=2$, and denote the two regions by $D$ and $D'$.  If $D$ and $D'$ have no common boundary edge, then the boundary walk of $\Delta$ traverses the entire boundary cycles of both regions and may also traverse one-dimensional connecting portions.  Hence
\[
 |w(\Delta)|\ge |\mu(D)|+|\mu(D')|.
\]
Thus the required strict inequality holds for both regions.

Suppose that $D$ and $D'$ have a common boundary edge.  Since $\Delta$ is simply connected, their common boundary edges form a single arc: two disjoint common arcs would create a nontrivial loop in the union of the two closed regions.  Let $v$ be its label.  Since $\Delta$ is reduced, $v$ is a piece.  Therefore
\[
 |v|<\frac{|\mu(D)|}{6}
 \qquad\text{and}\qquad
 |v|<\frac{|\mu(D')|}{6}.
\]
Moreover,
\[
 |w(\Delta)|=|\mu(D)|+|\mu(D')|-2|v|.
\]
It follows that
\[
 |w(\Delta)|-|\mu(D)|
   =|\mu(D')|-2|v|>0,
\]
and, symmetrically,
\[
 |w(\Delta)|-|\mu(D')|
   =|\mu(D)|-2|v|>0.
\]
This proves the result when $t=2$.

Now let $t>2$ and assume that the conclusion holds for every reduced van Kampen diagram satisfying the hypotheses of the lemma and having $t'$ regions, where $2\le t'<t$.

If $\Delta$ is not a disc diagram, cut the planar complex at its cut vertices and consider the maximal disc subdiagrams containing regions.  Distinct such disc components meet, if at all, only at cut vertices, and the boundary walk of $\Delta$ is obtained by concatenating their boundary walks together with possible one-dimensional connecting portions, which are traversed twice.  Since there is more than one disc component, every component containing a region has fewer than $t$ regions, and
\[
 |w(\Delta)|\ge \sum_{\Delta'} |w(\Delta')|.
\]
Let $D$ be a region contained in a disc component $\Delta'$.  If $\Delta'$ has one region, then
\[
 |\mu(D)|\le |w(\Delta')|<|w(\Delta)|.
\]
If $\Delta'$ has at least two regions, the inductive hypothesis gives
\[
 |\mu(D)|<|w(\Delta')|\le |w(\Delta)|.
\]
Thus the result follows, and we may assume that $\Delta$ is a disc diagram.

Let $M$ be the planar map associated to $\Delta$.  Since $M$ is a $(3,6)$-map, Theorem~4.3 of Chapter~V of~\cite{LS77} gives
\[
 \sum_M^\ast [4-i(D)]\ge6,
\]
where the sum is taken over the boundary regions $D$ for which $\partial D\cap\partial M$ is a consecutive part of $\partial M$.  Since $t>1$, every such region has $i(D)\ge 1$, and hence every positive summand is at most $3$.  Hence at least two distinct boundary regions $D_1,D_2$ occur in this sum with
\[
 1\le i(D_1)\le3,
 \qquad
 1\le i(D_2)\le3.
\]

We first remove only the region $D_1$ from $\Delta$.  Let $p_1$ be the total length, in the original $X^{\pm1}$-labelling, of the interior part of $\partial D_1$, and let $e_1$ be the length of the exterior part $\partial D_1\cap\partial\Delta$.  Every edge of the associated map contained in the interior part of $\partial D_1$ is labelled by a piece.  Hence
\[
 p_1<i(D_1)\frac{|\mu(D_1)|}{6}
     \le\frac{|\mu(D_1)|}{2}.
\]
Since $e_1=|\mu(D_1)|-p_1$, we have $e_1>p_1$.

Because $\partial D_1\cap\partial M$ is a consecutive part of the boundary cycle of $M$, its complementary interior part on the boundary cycle of $D_1$ is also a single consecutive arc.  Thus $D_1$ is a boundary ear: it is attached to the closure of the remaining disc diagram precisely along this interior arc.  Deleting the open $2$-cell $D_1$ together with the open exterior boundary arc therefore leaves a connected and simply connected subdiagram with $t-1$ regions. We denote this subdiagram by $\Delta_1$. On the boundary walk, this operation replaces the exterior arc of length $e_1$ by the interior attaching arc of length $p_1$.  Thus its boundary length is
\[
 |w(\Delta)|-e_1+p_1<|w(\Delta)|.
\]
By construction, $\Delta_1$ is a disc van Kampen diagram with no degree-one vertices (although the label of the boundary cycle of $\Delta_1$ is not necessarily cyclically reduced even if the label of the boundary cycle of $\Delta$ is cyclically reduced.).  Since $\Delta$ was a reduced diagram, $\Delta_1$ is also reduced. 
The digram $\Delta_1$ has exactly $t-1\ge 2$ regions, contains every region of $\Delta$ other than $D_1$, and satisfies
\[
 |w(\Delta_1)|<|w(\Delta)|.
\]
Thus the inductive hypothesis applied to $\Delta_1$ and gives, for every region $D\ne D_1$ of $\Delta$,
\[
 |\mu(D)|<|w(\Delta_1)|<|w(\Delta)|.
\]
In particular, this estimate holds for $D_2$.

A symmetric argument with removing $D_2$ from $\Delta$ yields $|\mu(D_1)|<|w(\Delta)|$, completing the inductive step.
\end{proof}

The argument permits intermediate boundary labels that are not freely reduced, even when the original boundary label is freely and cyclically reduced.

\begin{thm}[Greendlinger-type normal-closure rigidity]\label{thm:Greendlinger-normal}
Let $\mathcal R$ and $\mathcal S$ be symmetrized sets of nontrivial cyclically reduced words in a free group $F=F(X)$, and suppose that both $\mathcal R$ and $\mathcal S$ satisfy $C'(1/6)$.  If
\[
 N_{\mathcal R}=N_{\mathcal S},
\]
then
\[
 \mathcal R=\mathcal S.
\]
\end{thm}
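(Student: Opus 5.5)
By symmetry it suffices to prove $\mathcal R\subseteq\mathcal S$. Fix $r\in\mathcal R$. Since $r\in N_{\mathcal R}=N_{\mathcal S}$, there is a reduced van Kampen diagram over $\langle X\mid\mathcal S\rangle$ whose boundary label is $r$. Among all such diagrams choose one, $\Delta$, with the minimal number of regions. Because $r$ is nontrivial, $\Delta$ has at least one region. Because $r$ is cyclically reduced, we may assume $\Delta$ has no vertices of degree one: a degree-one vertex on the boundary would produce a cancelling pair $xx^{-1}$ in the cyclic boundary word.

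\emph{Step 1: $\Delta$ has exactly one region.} Suppose $\Delta$ had at least two regions. Lemma~\ref{lem:diagram} then gives $|\mu(D)|<|r|$ for every region $D$, so every relator in $\mathcal S$ occurring in $\Delta$ is strictly shorter than $r$. Running the same argument with $\mathcal R$ and $\mathcal S$ interchanged, every $s\in\mathcal S$ is a product of conjugates of elements of $\mathcal R$. This suggests inducting on length. I would do this by proving the stronger statement: for every $k$, the elements of length at most $k$ in $\mathcal R$ and in $\mathcal S$ coincide.

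Take $r$ of minimal length among the elements of $\mathcal R\setminus\mathcal S$, assuming this set is nonempty. Suppose its minimal diagram over $\mathcal S$ has at least two regions. Then $r$ lies in the normal closure of the elements of $\mathcal S$ of length $<|r|$. I would then handle the mixed situation by a double induction on the pair of lengths, as follows. Let $k$ be the least length at which the length-$\le k$ parts of $\mathcal R$ and $\mathcal S$ differ, and let $\mathcal R_{<k}=\mathcal S_{<k}$ denote the common shorter part. Then $r$ lies in the normal closure of $\mathcal R_{<k}$, which is a subset of $\mathcal R$.

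\emph{Step 2: this is impossible.} Consider the presentation $\langle X\mid\mathcal R\rangle$. The element $r$ bounds a reduced diagram over $\mathcal R_{<k}$ with at least one region. None of its regions is labelled by $r$ itself, since all of them are shorter than $r$. By Greendlinger's lemma (Theorem~4.4 of Chapter~V of~\cite{LS77}), the cyclic word $r$ then contains a subword of some $r'\in\mathcal R_{<k}$ of length more than $\frac12|r'|$. Since $r'\neq r$, that subword is a piece of $\mathcal R$, contradicting $C'(1/6)$. Hence the diagram of $r$ over $\mathcal S$ has exactly one region $D$. A one-region diagram with no degree-one vertices has boundary label equal to a cyclic shift of $\mu(D)$. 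The symmetrized set $\mathcal S$ is closed under cyclic shifts and inverses, so $r\in\mathcal S$.

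\emph{Main obstacle.} The delicate point is the ordering of the induction in Step~1. Its application of Lemma~\ref{lem:diagram} needs the boundary word to be reduced with no degree-one vertices, and the reduction to the common shorter part must be set up cleanly. I would first try the induction on the minimal length at which $\mathcal R$ and $\mathcal S$ differ, as above. The applications of Lemma~\ref{lem:diagram} and of Greendlinger's lemma should then be routine.
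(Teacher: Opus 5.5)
Your proposal is correct and is essentially the paper's proof. It uses the same three ingredients: a shortest discrepancy between $\mathcal R$ and $\mathcal S$; Lemma~\ref{lem:diagram}, to show that a reduced diagram over $\mathcal S$ with at least two regions uses only strictly shorter relators, hence relators common to both sets; and Greendlinger's lemma, to extract a piece of $\mathcal R$ longer than half a relator. One point needs tidying: the minimal element must be chosen in the symmetric difference $\mathcal R\mathbin\triangle\mathcal S$, interchanging $\mathcal R$ and $\mathcal S$ if necessary, as the paper does and as your symmetric ``for every $k$'' induction already does; a shortest element of $\mathcal R\setminus\mathcal S$ need not lie in $N_{\mathcal R_{<k}}$ when the first discrepancy occurs in $\mathcal S\setminus\mathcal R$.
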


\begin{proof}
Suppose $\mathcal R\ne\mathcal S$, and choose a shortest element $z$ of the symmetric difference $\mathcal R\mathbin\triangle\mathcal S$.  After interchanging the two sets, assume that $z\in\mathcal R\setminus\mathcal S$.  Since $z\in N_{\mathcal S}$, there is a reduced van Kampen disc diagram $\Delta$ over $\langle X\mid\mathcal S\rangle$ whose boundary label is $z$.  The word $z$ is cyclically reduced, so $\Delta$ has no boundary spurs and hence no vertices of degree one.  Moreover, $\Delta$ has at least two regions: a one-region diagram would imply that $z\in\mathcal S$, since $\mathcal S$ is symmetrized.

By Lemma~\ref{lem:diagram}, every region label $s\in\mathcal S$ occurring in $\Delta$ satisfies $|s|<|z|$.  The minimal choice of $z$ therefore implies that every such $s$ belongs to $\mathcal R\cap\mathcal S$.

Greendlinger's lemma~\cite[Chapter~V, Theorem~4.4]{LS77} gives a boundary region of $\Delta$ whose boundary shares with $\partial\Delta$ a segment labelled by a word $v$ satisfying
\[
 |v|>\frac{|s|}{2},
\]
where $s\in\mathcal R\cap\mathcal S$ is the label of that region.  After taking suitable cyclic conjugates, and inverses if necessary, two elements of $\mathcal R$, one of length $|s|$ and the other of length $|z|$, have $v$ as a common initial segment.  These elements are distinct because $|s|<|z|$.  Thus $v$ is a piece for $\mathcal R$, contradicting
\[
 |v|>\frac{|s|}{2}>\frac{|s|}{6}
\]
and the $C'(1/6)$ condition for $\mathcal R$.
\end{proof}

The sets $X$, $\mathcal R$, and $\mathcal S$ in Theorem~\ref{thm:Greendlinger-normal} need not be finite.

\begin{cor}[Irredundant tuple form]\label{cor:Greendlinger-tuples}
Let $q,t\ge 1$ and $0<\lambda\le1/6$.  Let
\[
 R=(r_1,\dots,r_q),
 \qquad
 S=(s_1,\dots,s_t)
\]
be irredundant tuples of nontrivial elements of $F_m$.  Suppose that the symmetrizations of cyclically reduced forms of both tuples satisfy $C'(\lambda)$.  If
\[
 N_R=N_S,
\]
then $q=t$, and after a permutation of the indices every $r_i$ is conjugate in $F_m$ to some $s_j^{\pm1}$.
\end{cor}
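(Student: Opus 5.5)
The plan is to reduce Corollary~\ref{cor:Greendlinger-tuples} directly to Theorem~\ref{thm:Greendlinger-normal}. Choose cyclically reduced representatives $u_i\in[r_i]$ and $u'_j\in[s_j]$, and let $\mathcal R=\calR_A(R)$ and $\mathcal S=\calR_A(S)$ be the resulting symmetrizations. Since $\lambda\le1/6$, both sets satisfy $C'(1/6)$. Normal closure is unchanged when generators are replaced by conjugates, inverses or cyclic permutations. Hence $N_{\mathcal R}=N_R=N_S=N_{\mathcal S}$, and Theorem~\ref{thm:Greendlinger-normal} gives $\mathcal R=\mathcal S$.

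Next I extract the tuple statement from this equality of sets. The set $\mathcal R$ is partitioned into the blocks $B_i$, where $B_i$ consists of the cyclic shifts of $u_i^{\pm1}$. Two elements of $F_m$ lie in the same block exactly when they are conjugate up to inversion: any two members of one block are conjugate up to inversion, and irredundancy of $R$ means members of distinct blocks are not. The same description applies to the blocks $B'_j$ of $\mathcal S$. Equality $\mathcal R=\mathcal S$ therefore identifies the two partitions, since both are the partition of one set by the relation ``conjugate up to inversion''. This gives a bijection $\sigma$ with $B_i=B'_{\sigma(i)}$. Consequently $q=t$, and each $r_i$ is conjugate to $u_i\in B'_{\sigma(i)}$, which is a cyclic shift of $(u'_{\sigma(i)})^{\pm1}$ and hence conjugate to $s_{\sigma(i)}^{\pm1}$.

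No root-freeness hypothesis is needed here. Lemma~\ref{lem:symmetrized-uniqueness} asks for it, but we only need the partition into conjugacy-up-to-inversion classes, not uniqueness of the cyclic position.

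The only delicate points are bookkeeping ones. First, I must check that $\mathcal R$ consists of nontrivial cyclically reduced words, as Theorem~\ref{thm:Greendlinger-normal} requires. Second, I must check that the conjugacy-up-to-inversion relation is transitive on these words, which is immediate. I do not expect a genuine obstacle, since the substantive content is carried by Theorem~\ref{thm:Greendlinger-normal}.
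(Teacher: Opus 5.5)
Your proposal is correct and is essentially the paper's own proof. You pass to the symmetrizations $\mathcal R,\mathcal S$, use $\lambda\le 1/6$ and the invariance of normal closure to apply Theorem~\ref{thm:Greendlinger-normal} and obtain $\mathcal R=\mathcal S$, and then read off $q=t$ and the matching from the partition of this common set into cyclic-permutation-and-inversion classes, which irredundancy makes into exactly $q$ (respectively $t$) distinct classes. Your observation that root-freeness is not needed agrees with the remark that follows the corollary in the paper.
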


\begin{proof}
Choose cyclically reduced conjugates $u_i$ of the $r_i$ and $v_j$ of the $s_j$, and let $\mathcal R$ and $\mathcal S$ be their symmetrizations.  Since $\lambda\le1/6$, both sets satisfy $C'(1/6)$, and
\[
 N_{\mathcal R}=N_R=N_S=N_{\mathcal S}.
\]
Theorem~\ref{thm:Greendlinger-normal} gives $\mathcal R=\mathcal S$.  The cyclic-permutation-and-inversion orbits in a symmetrized set are its symmetrized relator classes.  Irredundancy says that the entries of $R$ and $S$ represent, respectively, $q$ and $t$ distinct such classes.  Hence $q=t$, and the asserted correspondence follows.
\end{proof}

\begin{rem}
Root-freeness is not needed in Theorem~\ref{thm:Greendlinger-normal} or Corollary~\ref{cor:Greendlinger-tuples}.  Irredundancy is needed only to recover the number of entries in a tuple: repetitions or duplicate conjugacy classes do not change a normal closure.  Greendlinger's shortest-unmatched-relator argument removes any common-length requirement.
\end{rem}

\section{Isomorphism rigidity}\label{sec:isomorphism}

Let $F_m=F(A)$ be as in Convention~\ref{conv:standing}.  Throughout this section, $R,S$ are finite nontrivial tuples with normal closures $N_R,N_S$.  We first isolate the lifting consequence of Nielsen uniqueness.

\begin{lem}[Lifting an isomorphism to the free group]\label{lem:isomorphism-lift}
Let $R$ and $S$ be finite tuples of nontrivial elements of $F_m$, and let $N_R,N_S\unlhd F_m$ be their normal closures.  Put
\[
 G_R=F_m/N_R,
 \qquad
 G_S=F_m/N_S.
\]
Suppose that every generating $m$-tuple of $G_R$ is Nielsen-equivalent to the standard tuple $(\ol a_1,\dots,\ol a_m)$, where $\ol a_i$ denotes the image of $a_i$ in $G_R$.  If $G_R\cong G_S$, then there exists $\Phi\in\Aut(F_m)$ such that
\[
 \Phi(N_R)=N_S.
\]
\end{lem}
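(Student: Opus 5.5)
Let $\psi:G_S\to G_R$ be an isomorphism (we use the inverse direction of the given one). Write $\pi_R:F_m\to G_R$ and $\pi_S:F_m\to G_S$ for the quotient maps, and $\ol a_i=\pi_R(a_i)$, $\ol b_i=\pi_S(a_i)$. The tuple $(\psi(\ol b_1),\dots,\psi(\ol b_m))$ generates $G_R$, since $\psi$ is surjective. By hypothesis it is Nielsen-equivalent to the standard tuple $(\ol a_1,\dots,\ol a_m)$. Hence there is a finite sequence of elementary Nielsen moves carrying $(\ol a_1,\dots,\ol a_m)$ to $(\psi(\ol b_1),\dots,\psi(\ol b_m))$.

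The key step is to lift this sequence to $F_m$. Perform the same formal Nielsen moves on the basis $(a_1,\dots,a_m)$ of $F_m$. Elementary Nielsen moves are permuting, inverting, or multiplying one entry by another; applied to a basis they produce a basis, and they commute with any homomorphism. We obtain a basis $(u_1,\dots,u_m)$ of $F_m$ with $\pi_R(u_i)=\psi(\ol b_i)$ for all $i$. Define $\Phi^{-1}\in\Aut(F_m)$ by $a_i\mapsto u_i$. Then $\pi_R\circ\Phi^{-1}$ and $\psi\circ\pi_S$ agree on the generators $a_i$, so
\[
 \pi_R\circ\Phi^{-1}=\psi\circ\pi_S .
\]

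Now compare kernels. Since $\psi$ is injective, $\ker(\psi\circ\pi_S)=N_S$. On the other side, $\ker(\pi_R\circ\Phi^{-1})=\Phi(N_R)$. Hence $\Phi(N_R)=N_S$.

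No serious obstacle is expected. The only point needing care is that Nielsen equivalence is witnessed by a finite word in elementary moves, and that the same word applied in $F_m$ yields an automorphism compatible with the quotient maps. This holds because each elementary move is defined by a formula that is natural with respect to homomorphisms.
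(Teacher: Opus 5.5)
Your proof is correct and is essentially the paper's argument. The paper likewise uses Nielsen uniqueness to get $\Theta\in\Aut(F_m)$ with $\Psi^{-1}\circ\pi_S=\pi_R\circ\Theta$ and compares kernels, with your $\Phi^{-1}$ playing the role of its $\Theta$; the only difference is that you spell out why a Nielsen equivalence lifts to an automorphism of $F_m$, a step the paper leaves implicit.
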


\begin{proof}
Let $\pi_R:F_m\to G_R$ and $\pi_S:F_m\to G_S$ be the quotient maps and $\Psi:G_R\to G_S$ an isomorphism.  The tuple
\[
 \bigl(\Psi^{-1}(\pi_S(a_1)),\dots,
       \Psi^{-1}(\pi_S(a_m))\bigr)
\]
generates $G_R$.  By Nielsen uniqueness, there is $\Theta\in\Aut(F_m)$ such that
\[
 \Psi^{-1}\circ\pi_S=\pi_R\circ\Theta.
\]
Taking kernels gives
\[
 N_S=\Theta^{-1}(N_R).
\]
Thus $\Phi=\Theta^{-1}$ satisfies $\Phi(N_R)=N_S$.
\end{proof}

\begin{prop}[Nielsen--Greendlinger isomorphism rigidity]\label{prop:Nielsen-Greendlinger}
Let $0<\lambda\le1/6$, and let
\[
 R=(r_1,\dots,r_q)
\]
be an irredundant $\lambda$-stable tuple of nontrivial elements of $F_m$.  Let $N_R\unlhd F_m$ be its normal closure and put
\[
 G_R=F_m/N_R.
\]
Suppose that every generating $m$-tuple of $G_R$ is Nielsen-equivalent to the standard tuple $(\ol a_1,\dots,\ol a_m)$, where $\ol a_i$ denotes the image of $a_i$ in $G_R$.  Let
\[
 S=(s_1,\dots,s_t)
\]
be an irredundant tuple of nontrivial elements of $F_m$ such that the symmetrization of cyclically reduced representatives of its entries satisfies $C'(\lambda)$.  Let $N_S\unlhd F_m$ be its normal closure and put
\[
 G_S=F_m/N_S.
\]
Then
\[
 G_R\cong G_S
\]
if and only if $t=q$ and there exist $\Phi\in\Aut(F_m)$, a permutation $\sigma\in\Sym(q)$, signs $\epsilon_i\in\{\pm1\}$, and elements $g_i\in F_m$ such that
\[
 \Phi(r_i)=g_i s_{\sigma(i)}^{\epsilon_i}g_i^{-1}
 \qquad(1\le i\le q).
\]
\end{prop}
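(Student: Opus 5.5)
The plan is to prove the two implications separately; the reverse implication is routine and the forward one is a short combination of Lemma~\ref{lem:isomorphism-lift} with Corollary~\ref{cor:Greendlinger-tuples}.

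\emph{Reverse implication.} Suppose $t=q$ and $\Phi(r_i)=g_i s_{\sigma(i)}^{\epsilon_i}g_i^{-1}$ for every $i$. Then $\Phi(N_R)$ is the normal closure of the elements $\Phi(r_i)$. Each of these is conjugate to some $s_{\sigma(i)}^{\pm1}$, and $\sigma$ is a bijection, so this normal closure is exactly $N_S$. Hence $\Phi$ descends to an isomorphism $G_R=F_m/N_R\to F_m/N_S=G_S$.

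\emph{Forward implication.}
\begin{enumerate}[(1)]
\item Assume $G_R\cong G_S$. Lemma~\ref{lem:isomorphism-lift} applies, since Nielsen uniqueness is assumed for $G_R$. It gives $\Phi\in\Aut(F_m)$ with $\Phi(N_R)=N_S$.
\item Consider the tuple $\Phi(R)=(\Phi(r_1),\dots,\Phi(r_q))$. Its normal closure is $\Phi(N_R)=N_S$.
\item The tuple $\Phi(R)$ is irredundant, because automorphisms preserve conjugacy and inversion relations between entries.
\item Because $R$ is $\lambda$-stable (Definition~\ref{def:lambda-stability}), the symmetrization of the cyclically reduced forms of the entries of $\Phi(R)$ satisfies $C'(\lambda)$.
\item By hypothesis, the symmetrization of the cyclically reduced forms of the entries of $S$ also satisfies $C'(\lambda)$, with $\lambda\le 1/6$.
\item Apply Corollary~\ref{cor:Greendlinger-tuples} to the pair $\Phi(R)$ and $S$. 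It yields $q=t$ and a permutation matching the entries: each $\Phi(r_i)$ is conjugate to $s_{\sigma(i)}^{\pm1}$.
\end{enumerate}

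The only points needing care are the following.
\begin{itemize}
\item The correspondence in Corollary~\ref{cor:Greendlinger-tuples} must be a genuine bijection $\sigma\in\Sym(q)$. This holds because both tuples are irredundant and $q=t$, so the matching of symmetrized relator classes is one-to-one.
\item Stability, defined for the underlying set, transfers directly to the images under $\Phi$.
\end{itemize}
I do not expect a real obstacle; the substantive work is already contained in Theorem~\ref{thm:Greendlinger-normal} and in the Nielsen uniqueness hypothesis.
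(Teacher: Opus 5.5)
Your proposal is correct and follows essentially the same route as the paper. You lift the isomorphism via Lemma~\ref{lem:isomorphism-lift}, transfer irredundancy and $C'(\lambda)$ to $\Phi(R)$ using $\lambda$-stability, and conclude with Corollary~\ref{cor:Greendlinger-tuples}. The only cosmetic difference is that the paper first replaces $\Phi(r_i)$ and $s_j$ by cyclically reduced conjugates $U,V$ and undoes those conjugations at the end, whereas you apply the corollary directly to $\Phi(R)$ and $S$; the corollary's statement for tuples of nontrivial elements permits this.
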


\begin{proof}
If the displayed relations hold, then $\Phi(N_R)=N_S$, so $\Phi$ induces an isomorphism $G_R\cong G_S$.

Conversely, suppose $G_R\cong G_S$.  Lemma~\ref{lem:isomorphism-lift} gives $\Phi\in\Aut(F_m)$ such that
\[
 \Phi(N_R)=N_S.
\]
Choose cyclically reduced conjugates $u_i$ of $\Phi(r_i)$ and $v_j$ of $s_j$, and put
\[
 U=(u_1,\dots,u_q),
 \qquad
 V=(v_1,\dots,v_t).
\]
Then $N_U=N_V$.  The tuple $U$ is irredundant, and $\lambda$-stability of $R$ gives $C'(\lambda)$ for its symmetrization; the hypothesis on $S$ gives the same for $V$.  Corollary~\ref{cor:Greendlinger-tuples} therefore gives $t=q$ and, after a permutation, makes every $u_i$ a cyclic conjugate of $v_{\sigma(i)}^{\epsilon_i}$.  Undoing the conjugations defining $u_i$ and $v_j$ gives the required elements $g_i$.
\end{proof}

\section{The generic rigidity class}\label{sec:generic-class}

For a cyclic word $w$, write $\sh_t(w)$ for cyclic shift by $t$ positions.

Fix $m\ge 2$ and $q\ge 1$.  For $n\ge3$, let $\Gamma_{m,q,n}$ be the group of transformations of $\CR_n^q$ generated by:
\begin{enumerate}[(i)]
\item one global relabeling $\Theta\in\Rel(A)$;
\item a permutation of the $q$ components;
\item independent inversion of the components;
\item independent cyclic shifts of the components.
\end{enumerate}
Thus every element has the form
\begin{equation}\label{eq:Gamma-action}
 (r_1,\dots,r_q)
 \longmapsto
 \left(
 \sh_{t_i}\bigl(\Theta(r_{\sigma(i)})^{\epsilon_i}\bigr)
 \right)_{i=1}^q,
\end{equation}
where $\sigma\in\Sym(q)$, $\epsilon_i\in\{\pm1\}$, and $t_i\in\Z/n\Z$.

\begin{lem}[The obvious symmetry group]\label{lem:Gamma-order}
Fix $m\ge 2$, $q\ge 1$, and an integer $n\ge3$.  Then the action of $\Gamma_{m,q,n}$ on $\CR_n^q$ defined in \eqref{eq:Gamma-action} is faithful and
\begin{equation}\label{eq:Gamma-order}
 |\Gamma_{m,q,n}|
 =2^{m+q}m!\,q!\,n^q.
\end{equation}
\end{lem}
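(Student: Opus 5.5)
The plan is to realize $\Gamma_{m,q,n}$ as a quotient of an abstract group of the right order, and then show that the action of that abstract group is faithful. The faithfulness gives both assertions at once. Let
\[
 \widetilde\Gamma=\Rel(A)\times\bigl((\Z/2\times\Z/n\Z)\wr\Sym(q)\bigr),
\]
with an element written as $(\Theta,\sigma,(\epsilon_i),(t_i))$ and acting by \eqref{eq:Gamma-action}. First I check that \eqref{eq:Gamma-action} really is a group action. Relabelings commute with inversion and with cyclic shifts, since $\Theta$ acts letterwise. Inversion conjugates $\sh_t$ to $\sh_{-t}$. Hence the transformations compose according to the semidirect-product rule of the wreath product. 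This shows that $\Gamma_{m,q,n}$ is the image of $\widetilde\Gamma$ and has order at most
\[
 |\Rel(A)|\cdot 2^q n^q q!=2^m m!\cdot 2^q q!\,n^q .
\]
It therefore suffices to prove that the homomorphism $\widetilde\Gamma\to\operatorname{Sym}(\CR_n^q)$ is injective.

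For injectivity, suppose $(\Theta,\sigma,\epsilon,t)$ fixes every tuple in $\CR_n^q$. The idea is to test it on well-chosen tuples. I will exhibit a single cyclically reduced word $w$ of length $n\ge3$ such that three properties hold:
\begin{enumerate}[(a)]
\item the images $\sh_t(\Theta(w)^{\epsilon})$ for distinct $(\Theta,\epsilon,t)$ are pairwise distinct, i.e.\ $w$ has trivial stabilizer under the group $\Rel(A)\times\Z/2\times\Z/n\Z$ acting on $\CR_n$;
\item the same holds simultaneously for a second word $w'$;
\item no element of that group sends $w$ to $w'$.
\end{enumerate}
Now plug in the tuple $(w',w,w,\dots,w)$. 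Suppose $q\ge2$ and $\sigma$ moves some index; by choosing where to place $w'$ we may take $\sigma(1)\ne1$. Then coordinate $1$ of the image is a transform of $w$, and this transform must equal $w'$, contradicting (c). So $\sigma=1$. Each coordinate now reads $\sh_{t_i}(\Theta(w_i)^{\epsilon_i})=w_i$, and (a) forces $\Theta=1$, $\epsilon_i=1$ and $t_i=0$.

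The main obstacle is producing such words for every $n\ge3$ and $m\ge2$, including small $n$. Trivial stabilizer needs several ingredients:
\begin{itemize}
\item $w$ is not a proper power and is not a cyclic shift of a nontrivial relabeling of itself;
\item no shift of $\Theta(w)$ equals $w^{-1}$;
\item no nontrivial shift fixes $w$.
\end{itemize}
For this I would use explicit words rather than the generic statement of Theorem~\ref{thm:KSS-input}, which is only asymptotic. A candidate is $w=a_1^{n-2}a_2\cdots$. Make it positive, so that inversion is detected: a relabeling keeps a positive word positive only if it fixes signs, and then the result can never be a negative word. Also make the exponent pattern of the letters asymmetric, so that no permutation of letters or shift preserves it. For small $n$, and especially when $m>n$, distinct letters cannot all appear; then a relabeling permuting unused letters acts trivially on the word. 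This breaks plan (a) with a single word.

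I would therefore instead use different words in different coordinates to detect $\Theta$, as long as $q$ and $n$ allow it. Failing that, I would weaken the claim to what is actually needed. Faithfulness can fail for tiny $n$: with $m>n$ and $q$ small, a transposition of unused letters fixes every tuple. I therefore expect the true argument to exploit the fact that $\Gamma_{m,q,n}$ is defined as a transformation group. In that reading the global relabeling is identified with its action, and the order formula is really $|\widetilde\Gamma|$. In writing the proof I would first verify faithfulness for $n\ge3$ under the natural reading, and I would flag the small-$n$ edge cases as the delicate point.
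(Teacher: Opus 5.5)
\textbf{There is a genuine gap: the injectivity step is never proved, and the route you choose cannot work in general.}

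Your reduction is fine. The transformations \eqref{eq:Gamma-action} compose according to $\Rel(A)\times(D_{n}\wr\Sym(q))$; the per-component factor is dihedral of order $2n$, not $\Z/2\times\Z/n\Z$, but the count is unchanged. So $|\Gamma_{m,q,n}|\le 2^{m+q}m!\,q!\,n^q$, with equality exactly when this abstract group acts faithfully. That injectivity is the whole content of the lemma.

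You look for a word, or tuple, with trivial stabilizer, i.e.\ a free orbit. This is much stronger than faithfulness and is genuinely unavailable. If a basis letter $a_k$ occurs in no entry of a tuple, the relabeling $a_k\mapsto a_k^{-1}$ fixes that tuple, so for $qn<m$ no tuple has trivial stabilizer.

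From this you conclude that ``faithfulness can fail \dots\ a transposition of unused letters fixes every tuple.'' That is false: it confuses the stabilizer of one tuple with the kernel of the action. The transposition $a_1\leftrightarrow a_2$ moves $(a_1^n,\dots,a_1^n)$. The lemma holds for all $m\ge2$, $q\ge1$, $n\ge3$.

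Your fallback reading is circular. As a transformation group, $\Gamma_{m,q,n}$ acts faithfully by definition, but identifying its order with $|\widetilde\Gamma|$ is exactly the injectivity you have not shown.

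The missing idea is to use different test tuples for different kernel elements, as the paper does.
\begin{enumerate}
\item If $\sigma(i)=j\ne i$, coordinate $i$ of the image does not depend on $r_i$. Varying $r_i$ alone gives $\sigma=1$.
\item Testing component $i$ on the constant words $a^n$, $a\in A^{\pm1}$, gives $\Theta(a)^{\epsilon_i}=a$ for all $a$. Since $\Theta$ is global, either $\Theta=1$ and all $\epsilon_i=1$, or $\Theta$ is the inversion relabeling $a\mapsto a^{-1}$ and all $\epsilon_i=-1$.
\item In the first case, a positive word with no rotational period, e.g.\ $a_1a_2^{n-1}$, forces $t_i=0$.
\item In the second case, the map is $w\mapsto\sh_{t_i}$ of the reversal of $w$, i.e.\ a reflection $j\mapsto c-j$ of the positions $\Z/n\Z$. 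For $n\ge3$ it moves some position $p$ to $p'\ne p$, and the word with $a_2$ at $p$ and $a_1$ elsewhere is not fixed.
\end{enumerate}

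The reflection case is precisely where $n\ge3$ enters. For $n=2$, the map $w\mapsto\sh_1(\Theta(w)^{-1})$ with $\Theta$ the inversion relabeling fixes every element of $\CR_2$. Your positivity device does not detect this case, because $\Theta(w)^{-1}$ is again positive when $\Theta$ flips every sign. The real delicate point is this reflection, not the regime $m>n$.
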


\begin{proof}
The rotations and inversions of the individual cyclic words form a product of $q$ dihedral groups of order $2n$; component permutations permute these factors, while the global relabeling group has order $2^m m!$ and commutes with rotations and component permutations.

To see faithfulness, suppose that a transformation acts identically on all tuples.  Varying the components independently first forces the component permutation to be trivial.  On each component the resulting map has the form
\[
 w\longmapsto\sh_t\bigl(\Theta(w)^\epsilon\bigr).
\]
Applying it to the constant cyclic words $a^n$, for all $a\in A^{\pm1}$, shows, on each component, that either $\epsilon=1$ and $\Theta=1$, or $\epsilon=-1$ and $\Theta$ is the global inversion relabeling.  Since $\Theta$ is global, the same alternative holds on every component.  In the first case a positive cyclic word with no nontrivial rotational period forces every $t_i=0$.  In the second case the map on the $i$-th component is a prescribed reflection of the $n$ cyclic positions.  Since $n\ge3$, choose two positions interchanged by that reflection, put different positive basis letters at those positions, and fill all remaining positions with one of them.  The resulting word is aperiodic and is not fixed by the prescribed reflection.  Hence the second case is impossible.  Thus the action is faithful, and the semidirect-product description gives \eqref{eq:Gamma-order}.
\end{proof}

Let $\mathcal P_{m,q}(n)$ be the Kapovich--Schupp class from Theorem~\ref{thm:KS-input}.  Replace it by its $\Gamma_{m,q,n}$-invariant core
\[
 \mathcal P_{m,q}^{\mathrm{inv}}(n)
 =\bigcap_{\gamma\in\Gamma_{m,q,n}}
   \gamma\mathcal P_{m,q}(n).
\]
Since $|\Gamma_{m,q,n}|$ is polynomial in $n$, this invariant core remains exponentially generic and polynomial-time recognizable.

Fix
\[
 \lambda_0=\frac{1}{12}.
\]
The uniform current $\nu_A$ is invariant under $\Rel(A)$.  By part~\textup{(1)} of Lemma~\ref{lem:uniform-filling-neighborhood}, choose a $\Rel(A)$-invariant neighborhood
\[
 \mathcal U\subseteq\mathcal S_A
\]
of $\nu_A$ and a rational number $0<c\le1$ such that
\begin{equation}\label{eq:generic-current-lower}
 \langle T,\eta\rangle\ge c\Lambda_A(T)
 \qquad(\eta\in\mathcal U,\,T\in\cvbar_m\setminus\{0\}).
\end{equation}
We may and do choose $\mathcal U$ to be defined by finitely many strict rational cylinder-coordinate inequalities.  Indeed, first choose an invariant open filling neighborhood, then choose a neighborhood specified by finitely many rational cylinder-coordinate inequalities whose closure lies inside it, and intersect its finitely many relabeling translates.  These rational inequalities and the constant $c$ are fixed auxiliary data for the classes below; the membership statement does not require a uniform procedure that constructs this data from $m$ and $q$.  Accordingly, when we call the classes $\calQ_{m,q}(n)$ \emph{finitely specified}, we mean that for fixed $m,q$ this finite rational auxiliary data has been chosen once and for all.

For $r\in\CR_n$, put
\[
 \widehat\eta_r=\frac{1}{n}\eta_r\in\mathcal S_A.
\]

\begin{defn}[The class $\calQ_{m,q}(n)$]\label{def:Qcr}
For each integer $n\ge 1$, define $\calQ_{m,q}(n)$ as follows.  If $1\le n<3$, put $\calQ_{m,q}(n)=\varnothing$.  If $n\ge3$, a tuple
\[
 R=(r_1,\dots,r_q)\in\CR_n^q
\]
belongs to $\calQ_{m,q}(n)$ if:
\begin{enumerate}[(Q1)]
\item $R\in\mathcal P_{m,q}^{\mathrm{inv}}(n)$;
\item $\widehat\eta_{r_i}\in\mathcal U$ for every $i$;
\item
\begin{equation}\label{eq:Qcr-overlap}
 2\bigl(\rhoSigned(R)+2m\bigr)<c\lambda_0 n;
\end{equation}
\item every $r_i$ belongs to $\mathcal Z_m$, and
\[
 \Stab_{\Gamma_{m,q,n}}(R)=\{1\}.
\]
\end{enumerate}
\end{defn}

\begin{lem}[Deterministic consequences]\label{lem:Qcr-consequences}
Fix $n\ge 1$ and let $R\in\calQ_{m,q}(n)$.  Let $N_R\unlhd F_m$ be the normal closure of $R$ and put $G_R=F_m/N_R$.  Then:
\begin{enumerate}[(1)]
\item $R$ is admissible and $\lambda_0$-stable;
\item every component $r_i$ is strictly minimal and satisfies $\Stab_{\Out(F_m)}([r_i])=\{1\}$;
\item the conclusions of parts~\textup{(3)}--\textup{(4)} of Theorem~\ref{thm:KS-input} hold for $G_R$.
\end{enumerate}
\end{lem}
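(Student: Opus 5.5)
The proof is a direct unpacking of the four defining conditions (Q1)--(Q4), each routed through an earlier result. Since $\calQ_{m,q}(n)=\varnothing$ for $n<3$, we may assume $n\ge3$ throughout.

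\emph{Part (1).} Condition (Q3) says in particular that $\rhoSigned(R)<\infty$, so $R$ is admissible by Lemma~\ref{lem:rho-signed-admissible}. For stability we reproduce the denominator argument of Proposition~\ref{prop:finite-frequency-estimate}, but with the fixed neighborhood $\mathcal U$ and constant $c$ in place of the ones constructed there. Each $r_i$ has length $n$, so $\widehat\eta_{r_i}\in\mathcal U$ by (Q2). Then \eqref{eq:generic-current-lower} applied to the tree $T=T_A\phi$, together with the equivariance in Proposition~\ref{prop:current-facts}(2), gives
\[
\|\phi(r_i)\|_A=n\langle T_A\phi,\widehat\eta_{r_i}\rangle\ge cn\Lambda_A(\phi)
\qquad\text{for all }\phi\in\Out(F_m).
\]
Next, Proposition~\ref{prop:deterministic-piece} bounds every piece $v$ of $\calR_A(\phi(R))$ by $|v|\le 2\Lambda_A(\phi)(\rhoSigned(R)+2m)$. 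Dividing the two bounds and using \eqref{eq:Qcr-overlap}, every ratio $|v|/\|\phi(r_i)\|_A$ is strictly less than $\lambda_0$. Hence $\Delta_A(R)\le\lambda_0$.

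To obtain $C'(\lambda_0)$ we only need a strict inequality for each individual piece, and we have exactly that. So $\lambda_0$-stability follows without passing through \eqref{eq:Delta-implies-stability}. If desired, one can also note that the supremum is strict, because the bound is uniform in $\phi$ and $2(\rhoSigned(R)+2m)/(cn)<\lambda_0$ is itself a strict numerical inequality.

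\emph{Part (2).} Condition (Q4) puts each $r_i$ in $\mathcal Z_m$. By the definition recalled before Theorem~\ref{thm:KSS-input}, elements of $\mathcal Z_m$ are strictly minimal. Theorem~\ref{thm:KSS-input}(2) then gives the trivial $\Out(F_m)$-stabilizer.

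\emph{Part (3).} Condition (Q1) gives $R\in\mathcal P_{m,q}^{\mathrm{inv}}(n)$. Taking $\gamma=1$ in the intersection defining the invariant core, we have $\mathcal P_{m,q}^{\mathrm{inv}}(n)\subseteq\mathcal P_{m,q}(n)$. Theorem~\ref{thm:KS-input}(3)--(4) therefore applies to $G_R$.

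The only step that needs any care is part (1): we must check that the constant $c$ from \eqref{eq:generic-current-lower} is exactly the constant appearing in \eqref{eq:Qcr-overlap}. This holds by construction, since both are the same fixed auxiliary datum. Everything else is bookkeeping.
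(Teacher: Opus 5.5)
Your proposal is correct and follows the paper's proof essentially line by line. Admissibility comes from (Q3) via Lemma~\ref{lem:rho-signed-admissible}; the bound $\|\phi(r_i)\|_A\ge cn\Lambda_A(\phi)$ comes from (Q2) and \eqref{eq:generic-current-lower}; the strict piece bound comes from Proposition~\ref{prop:deterministic-piece} combined with (Q3); and parts (2)--(3) come from (Q4) with Theorem~\ref{thm:KSS-input} and from (Q1) with Theorem~\ref{thm:KS-input}, where your remark that $\mathcal P_{m,q}^{\mathrm{inv}}(n)\subseteq\mathcal P_{m,q}(n)$ (take $\gamma=1$) makes explicit a step the paper leaves implicit.
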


\begin{proof}
Condition (Q3) makes $\rhoSigned(R)$ finite, so Lemma~\ref{lem:rho-signed-admissible} gives admissibility.  For $\phi\in\Out(F_m)$, condition (Q2) and \eqref{eq:generic-current-lower} give
\[
 \|\phi(r_i)\|_A
 =n\langle T_A\phi,\widehat\eta_{r_i}\rangle
 \ge cn\Lambda_A(\phi).
\]
If $v$ is a piece beginning a relator arising from component $i$, Proposition~\ref{prop:deterministic-piece} and (Q3) give
\[
 |v|
 \le2\Lambda_A(\phi)(\rhoSigned(R)+2m)
 <\lambda_0cn\Lambda_A(\phi)
 \le\lambda_0\|\phi(r_i)\|_A.
\]
This proves (1).  Part (2) follows from (Q4) and Theorem~\ref{thm:KSS-input}, while (3) follows from (Q1) and Theorem~\ref{thm:KS-input}.
\end{proof}

\begin{prop}[Genericity and effectiveness]\label{prop:Qcr-generic}
Fix $m\ge 2$ and $q\ge 1$, and let $\calQ_{m,q}(n)$ be the classes from Definition~\ref{def:Qcr}.  Then the following hold.
\begin{enumerate}[(1)]
\item There are constants $C,c_1>0$ such that, for every $n\ge 1$,
\[
 \frac{|\calQ_{m,q}(n)|}{|\CR_n|^q}
 \ge 1-Ce^{-c_1n}.
\]
\item For every $n\ge 1$, membership in $\calQ_{m,q}(n)$ is decidable in deterministic polynomial time in $n$.
\end{enumerate}
\end{prop}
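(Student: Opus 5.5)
Part (1) is a union bound over the four defining conditions of Definition~\ref{def:Qcr}; part (2) is a check that each condition can be tested in polynomial time. The cases $n<3$ are absorbed into the constant $C$, so assume $n\ge3$ and let $R$ be uniform in $\CR_n^q$.

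For genericity, I bound the failure probability of each condition separately.

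(Q1) Theorem~\ref{thm:KS-input}(1) makes $\mathcal P_{m,q}(n)$ exponentially generic. Each translate $\gamma\mathcal P_{m,q}(n)$ has the same density, because $\gamma$ is a bijection of $\CR_n^q$. A union bound over the $|\Gamma_{m,q,n}|=2^{m+q}m!\,q!\,n^q$ elements from Lemma~\ref{lem:Gamma-order} costs only a polynomial factor, which is absorbed by shrinking the exponent.

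(Q2) $\mathcal U$ is an open neighborhood of $\nu_A$ in $\mathcal S_A$. Lemma~\ref{lem:cr-current-concentration} bounds the probability that $\widehat\eta_{r_i}\notin\mathcal U$ by $C_{\mathcal U}e^{-c_{\mathcal U}n}$, and we take a union over the $q$ components.

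(Q3) Put $R_n=\lfloor (c\lambda_0 n-4m)/2\rfloor$ and apply Proposition~\ref{prop:signed-overlap-tail} with $R=\min\{R_n,\lfloor n/4\rfloor\}$. Both quantities grow linearly in $n$, so the bound $C_1n^2\theta^{R}$ is exponentially small. For large $n$ the event $\rhoSigned(R)<R$ implies \eqref{eq:Qcr-overlap}, since $c\lambda_0\le 1/12<1/4$ forces $R=R_n$ eventually.

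(Q4) Membership $r_i\in\mathcal Z_m$ fails with exponentially small probability by Theorem~\ref{thm:KSS-input}(1). The triviality of $\Stab_{\Gamma_{m,q,n}}(R)$ is the main obstacle. By a union bound over the polynomially many nonidentity $\gamma$, it suffices to show $\Pr(\gamma R=R)\le Ce^{-cn}$ uniformly in $\gamma\ne1$.

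I would handle this case by case.

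If $\gamma$ permutes the components nontrivially, then $\gamma R=R$ forces some $r_i$ to equal a relabeled shift of an independent $r_j^{\pm1}$. For fixed $r_j$ this pins down $r_i$ exactly, so the probability is at most $1/|\CR_n|$.

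If the permutation is trivial, then each component must satisfy $\sh_{t_i}(\Theta(r_i)^{\epsilon_i})=r_i$. If $\Theta\ne1$ or $\epsilon_i=-1$, this already contradicts $r_i\in\mathcal Z_m$, since no nontrivial relabeling sends $r_i$ to a conjugate of itself and no relabeling sends it to a conjugate of its inverse. If $\Theta=1$ and $\epsilon_i=1$ with some $t_i\ne0$, then $r_i$ has a nontrivial rotational period, so it is a proper power, and root-freeness in $\mathcal Z_m$ rules this out.

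So outside the already-controlled exceptional events, the only contribution comes from nontrivial component permutations, each of probability at most $|\CR_n|^{-1}$. Summing these over $\Gamma_{m,q,n}$ gives a bound of the form $\mathrm{poly}(n)(2m-1)^{-n}$.

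For effectiveness, I test each condition in turn.

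(Q1) Run the polynomial test of Theorem~\ref{thm:KS-input}(2) on each of the polynomially many tuples $\gamma^{-1}R$.

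(Q2) Evaluate the finitely many rational cylinder inequalities defining $\mathcal U$. This needs only the counts $\langle v,r_i\rangle_A$ for $|v|_A$ bounded, which take linear time.

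(Q3) Compute $\rhoSigned(R)$ by comparing prefixes of length at most $n$ over the $O(n^2)$ pairs of signed origins. If some pair agrees on $n$ letters, the overlap is infinite and (Q3) fails.

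(Q4) Use the linear-time test of Theorem~\ref{thm:KSS-input}(1), then check $\gamma R=R$ directly for every $\gamma\in\Gamma_{m,q,n}$ by comparing words.

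All four steps are polynomial in $n$. The auxiliary data $\mathcal U$ and $c$ are fixed rational constants, as stipulated.
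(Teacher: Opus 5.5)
Your proposal is correct and follows essentially the same route as the paper: a union bound over (Q1)--(Q4), with the stabilizer condition split into trivial component permutations (excluded deterministically by the defining properties of $\mathcal Z_m$ and root-freeness) and nontrivial ones (probability $|\CR_n|^{-1}$ each, summed over the polynomially many elements of $\Gamma_{m,q,n}$), followed by condition-by-condition polynomial-time checks. You make explicit a few details the paper leaves implicit, such as the integer threshold in (Q3) and its compatibility with the range $R\le n/4$, and the length-$n$ prefix test for infinite overlap, but the argument is the same.
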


\begin{proof}
Condition (Q1) is exponentially generic by Theorem~\ref{thm:KS-input} and the polynomial-size invariant-core construction.  Lemma~\ref{lem:cr-current-concentration} and a finite union bound treat (Q2).  Failure of (Q3) implies
\[
 \rhoSigned(R)\ge\frac{c\lambda_0}{2}n-2m,
\]
so Proposition~\ref{prop:signed-overlap-tail} treats (Q3).  The class $\mathcal Z_m$ is exponentially generic by Theorem~\ref{thm:KSS-input}.

It remains to treat tuple symmetries.  If every component lies in $\mathcal Z_m$, no nonidentity element of $\Gamma_{m,q,n}$ with trivial component permutation can fix the tuple: the defining properties of $\mathcal Z_m$ exclude nontrivial relabeling and inverse-conjugacy, while root-freeness excludes nonzero cyclic rotations.  If the component permutation is nontrivial, a fixed tuple satisfies, for some $i\ne j$, an equality between $r_i$ and a relabeling, inversion, and cyclic shift of $r_j$.  Since these operations preserve the uniform measure on $\CR_n$, the probability of any prescribed such equality is $|\CR_n|^{-1}$.  A union bound over the polynomially many elements of $\Gamma_{m,q,n}$ gives an exponentially small probability.

For effectiveness, Theorem~\ref{thm:KS-input}(2) and the polynomial-size invariant-core construction show that (Q1) is polynomial-time decidable for fixed $m,q$.  The finite cylinder inequalities in (Q2), signed periodic overlaps in (Q3), membership in $\mathcal Z_m$, and all finite symmetry checks are polynomial-time decidable as well.  Thus the total membership test is polynomial-time.
\end{proof}

\begin{lem}[Essential cyclic splittings]\label{lem:essential-cyclic-splitting}
Let $G$ be a torsion-free hyperbolic group.  Suppose that $G$ admits a nontrivial graph-of-groups splitting over infinite cyclic edge groups such that every edge group has infinite index in each incident vertex group.  Then $\Out(G)$ contains an element of infinite order.
\end{lem}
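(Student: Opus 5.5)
The plan is to build an infinite-order element of $\Out(G)$ as a Dehn twist along one edge of the given splitting. First reduce to a one-edge splitting: collapse every edge of the graph of groups except one edge $e$. The result is either an amalgam $G=A*_C B$ or an HNN extension $G=A*_C$, with $C=\langle c\rangle\cong\Z$. Collapsing preserves the hypothesis that $C$ has infinite index in each incident vertex group, since the new vertex groups contain the old ones. Define the twist $\tau$ as follows. In the amalgam case, $\tau$ is the identity on $A$ and conjugation by $c$ on $B$. In the HNN case, $\tau$ is the identity on $A$ and sends the stable letter $t\mapsto tc$. Since $c$ centralizes $C$, these maps respect the defining relations and define automorphisms, and $\tau^k$ is the twist by $c^k$.

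The main step is to show that no $\tau^k$ with $k\ne0$ is inner. Suppose $\tau^k=\operatorname{ad}(g)$ in the amalgam case. Then $g$ centralizes $A$. In a torsion-free hyperbolic group the centralizer of a non-cyclic subgroup is trivial. If $A$ is cyclic, then $A\supseteq C$ with infinite index is impossible, because a nontrivial subgroup of $\Z$ has finite index. So $A$ is non-cyclic, hence $g=1$, and therefore $\tau^k|_B=\operatorname{id}$. That forces $c^k$ to centralize $B$. Since $B$ is non-cyclic by the same index argument, its centralizer is trivial, which contradicts $c^k\ne1$.

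In the HNN case the same argument shows $g=1$ from $A$ being non-cyclic, so $tc^k=t$, again a contradiction. Here $A$ is non-cyclic because $C$ and $tCt^{-1}\cap A$, or $C$ itself, has infinite index in $A$. The expected obstacle is exactly this point: making sure the vertex group is not cyclic and not virtually equal to $C$, which is where the infinite-index hypothesis enters. I would also use the fact that centralizers of nontrivial elements in torsion-free hyperbolic groups are cyclic, so a non-cyclic subgroup has trivial centralizer.

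If needed, I would handle more carefully the subtlety that $\tau^k$ could be inner via some $g$ not centralizing $A$ pointwise. This cannot happen, because $\tau^k|_A=\operatorname{id}$ literally, so $\operatorname{ad}(g)|_A=\operatorname{id}$ means $g$ centralizes $A$. Hence the image of $\tau$ in $\Out(G)$ has infinite order.
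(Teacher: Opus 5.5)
Your proposal is correct and follows essentially the same route as the paper: collapse to a one-edge amalgam or HNN splitting, use the infinite-index hypothesis to make the vertex groups noncyclic, and show that no nonzero power of the Dehn twist is inner. A conjugator would centralize a noncyclic vertex group and hence be trivial, which then forces $c^k$ to centralize the noncyclic group $B$, or forces $tc^k=t$. In the HNN case only the infinite index of $C$ in $A$ is needed for noncyclicity, so your remark about $tCt^{-1}\cap A$ can simply be dropped.
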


\begin{proof}
Collapse all but one edge orbit in the Bass--Serre tree.  The resulting one-edge splitting retains the infinite-index condition and is either an amalgam
\[
 G=G_1\mathbin{\ast_C}G_2
\]
or an HNN extension
\[
 G=\langle H,t\mid t^{-1}C_-t=C_+\rangle,
\]
where the displayed edge groups are infinite cyclic.  In particular, $G_1,G_2$, and $H$ are noncyclic.

In the amalgam case, choose $1\ne c\in C$ and let $\delta$ fix $G_1$ pointwise and act on $G_2$ by conjugation by $c$.  This is the standard Dehn twist.  If $\delta^k$ were inner for some $k\ne0$, say $\delta^k=\operatorname{ad}(h)$, then $h$ would centralize $G_1$.  Hence $h=1$: if $h\ne1$, then the cyclic centralizer $C_G(h)$ would contain the noncyclic group $G_1$.  Thus $\delta^k$ would be the identity.  But $G_2$ is noncyclic, whereas $C_G(c^k)$ is cyclic, so some $g\in G_2$ does not commute with $c^k$ and
\[
 \delta^k(g)=c^kgc^{-k}\ne g,
\]
a contradiction.

In the HNN case, choose $1\ne c\in C_-$ and define $\delta$ by $\delta|_H=\operatorname{id}$ and $\delta(t)=ct$.  Since $C_-$ is cyclic, this preserves the defining relation.  If a nonzero power $\delta^k$ were inner, its conjugating element would centralize the noncyclic group $H$ and hence would be trivial.  This would make $\delta^k$ the identity, contrary to
\[
 \delta^k(t)=c^kt\ne t.
\]
Thus the outer class of $\delta$ has infinite order.  This is the standard Dehn-twist argument; compare~\cite[Theorem~1.2]{GL15}.
\end{proof}

\subsection{Proof of generic few-relator rigidity}

\phantomsection\label{proof:mainthm-rigidity}
\begin{mainthmrestated}{mainthm:rigidity}{Generic few-relator rigidity}
Fix $m\ge 2$ and $q\ge 1$.  For a finite tuple $R=(r_1,\dots,r_s)$ of elements of $F_m$, write
\[
 G_R=\langle a_1,\dots,a_m\mid r_1,\dots,r_s\rangle,
\]
and denote by $\ol a_i$ the image of $a_i$ in $G_R$.
There are finitely specified subsets
\[
 \calQ_{m,q}(n)\subseteq\CR_n^q
 \qquad(n\ge 1)
\]
and constants $C,c>0$ such that:
\begin{enumerate}[(1)]
\item For every $n\ge 1$, a uniform random tuple in $\CR_n^q$ belongs to $\calQ_{m,q}(n)$ with probability at least $1-Ce^{-cn}$.  Membership in $\calQ_{m,q}(n)$ is decidable in deterministic polynomial time in $n$.
\item Every $q$-tuple $R\in\calQ_{m,q}(n)$ is irredundant, root-free, and $1/6$-stable.
\item If $R=(r_1,\dots,r_q)\in\calQ_{m,q}(n)$, then $G_R$ is torsion-free, one-ended, and non-elementary hyperbolic; every subgroup generated by at most $m-1$ elements is free; every $m$-tuple generating a nonfree subgroup is Nielsen-equivalent to the standard tuple $(\ol a_1,\dots,\ol a_m)$; and $G_R$ is co-Hopfian, meaning that every injective endomorphism of $G_R$ is surjective.
\item If $R=(r_1,\dots,r_q)\in\calQ_{m,q}(n)$, then $G_R$ is complete, meaning that its center is trivial and every automorphism of $G_R$ is inner.  Its boundary is one-dimensional, connected, and has no local cut points, and hence is homeomorphic to the Menger curve or the Sierpi\'nski carpet.  If $q\ge m-1$, then $\partial G_R$ is the Menger curve.
\item Let $R=(r_1,\dots,r_q)\in\calQ_{m,q}(n)$, and let $S=(s_1,\dots,s_t)$ be an irredundant tuple of nontrivial elements of $F_m$ whose symmetrization of cyclically reduced representatives satisfies $C'(1/6)$.  Then $G_R\cong G_S$ if and only if $t=q$ and there are $\Phi\in\Aut(F_m)$, $\sigma\in\Sym(q)$, signs $\epsilon_i\in\{\pm1\}$, and $g_i\in F_m$ such that
\[
 \Phi(r_i)=g_i s_{\sigma(i)}^{\epsilon_i}g_i^{-1}
 \qquad(1\le i\le q).
\]

\item If $R=(r_1,\dots,r_q)\in\calQ_{m,q}(n)$ and $S=(s_1,\dots,s_q)\in\calQ_{m,q}(n')$, then $G_R\cong G_S$ if and only if $n=n'$ and there are $\Theta\in\Rel(A)$, $\sigma\in\Sym(q)$, and signs $\epsilon_i\in\{\pm1\}$ such that, for each $i=1,\dots,q$, the word $s_i$ is a cyclic permutation of $\Theta(r_{\sigma(i)})^{\epsilon_i}$. 
\end{enumerate}
\end{mainthmrestated}

\begin{proof}
Proposition~\ref{prop:Qcr-generic} gives part~(1).  Lemma~\ref{lem:Qcr-consequences}(1) gives part~(2): admissibility means root-freeness and irredundancy, and $\lambda_0$-stability with $\lambda_0=1/12$ implies $1/6$-stability.  Lemma~\ref{lem:Qcr-consequences} together with Theorem~\ref{thm:KS-input} gives part~(3).

We prove completeness.  Let $R=(r_1,\dots,r_q)\in\calQ_{m,q}(n)$ and let $\Psi\in\Aut(G_R)$.  By Nielsen uniqueness, $\Psi$ lifts to $\Phi\in\Aut(F_m)$ with $\Phi(N_R)=N_R$.  Stability and Corollary~\ref{cor:Greendlinger-tuples} give a permutation $\sigma$, signs $\epsilon_i$, and conjugators $g_i$ such that
\begin{equation}\label{eq:auto-relator-permutation}
 \Phi(r_i)=g_i r_{\sigma(i)}^{\epsilon_i}g_i^{-1}.
\end{equation}
The words $r_1$ and $r_{\sigma(1)}^{\epsilon_1}$ are strictly minimal and have the same length.  Theorem~\ref{thm:KSS-input}(3) gives a relabeling $\Theta\in\Rel(A)$ carrying $r_1$ to a cyclic shift of $r_{\sigma(1)}^{\epsilon_1}$.  Hence $\Theta^{-1}\Phi$ fixes $[r_1]$, and the trivial outer stabilizer of $r_1$ gives
\[
 [\Phi]=[\Theta]\in\Out(F_m).
\]
Using this in \eqref{eq:auto-relator-permutation} shows that the global relabeling $\Theta$, the component permutation $\sigma$, the signs $\epsilon_i$, and suitable cyclic shifts define an element of $\Gamma_{m,q,n}$ fixing $R$.  Condition (Q4) makes that element trivial.  Therefore $[\Phi]=1$, so every automorphism of $G_R$ is inner.  The center of a non-elementary torsion-free hyperbolic group is trivial, and hence $G_R$ is complete.

Because $R$ is root-free and satisfies $C'(\lambda_0)$ with $\lambda_0<1/6$, the classical asphericity theorem for $C(6)$ presentations without proper-power relators~\cite[Chapter~V, Theorem~13.3]{LS77} makes the presentation complex a finite two-dimensional $K(G_R,1)$, with
\begin{equation}\label{eq:Euler-cr-presentation}
 \chi(G_R)=1-m+q.
\end{equation}
Since $G_R$ is one-ended and nonfree, Stallings--Swan gives $\cdim(G_R)=2$, and Bestvina--Mess~\cite{BM91} gives $\dim\partial G_R=1$.

By Lemma~\ref{lem:essential-cyclic-splitting} and completeness, $G_R$ admits no essential splitting over an infinite cyclic subgroup, where ``essential'' means that every edge group has infinite index in each incident vertex group.

Since $G_R$ is one-ended, it has no splitting over a finite subgroup.  As it is torsion-free, its two-ended subgroups are infinite cyclic, so it admits no essential splitting over a two-ended subgroup.  Moreover, $G_R$ is not virtually cocompact Fuchsian, since in the torsion-free case this would make it a closed surface group, whose outer automorphism group is infinite.  The boundary of a one-ended hyperbolic group is connected and locally connected.  Bowditch's construction underlying~\cite[Theorem~6.2]{Bow98} shows that a local cut point would yield a nontrivial splitting over a two-ended subgroup.  In the standard reduction of the canonical graph of groups, each cyclic vertex of this type has valence at least two; collapsing one incident edge absorbs that vertex into a non-two-ended vertex group while leaving another edge, and the resulting splitting is nontrivial and essential in the above sense.  Hence $\partial G_R$ has no local cut points.  Since $\dim\partial G_R=1$, Kapovich--Kleiner~\cite[Theorem~1]{KK00} gives the circle/Menger-curve/Sierpi\'nski-carpet alternatives.  The circle case is excluded by the preceding non-Fuchsian observation, leaving the Menger curve or the Sierpi\'nski carpet.  If $q\ge m-1$, then \eqref{eq:Euler-cr-presentation} gives $\chi(G_R)\ge0$, whereas a torsion-free hyperbolic group with Sierpi\'nski-carpet boundary has negative Euler characteristic by~\cite[Corollary~8]{KK00}.  This proves part~(4).

For part~(5), Lemma~\ref{lem:Qcr-consequences} makes $R$ irredundant and $1/6$-stable, since $\lambda_0<1/6$, and supplies Nielsen uniqueness.  Proposition~\ref{prop:Nielsen-Greendlinger}, with parameter $1/6$, gives the assertion.

For part~(6), let $R\in\calQ_{m,q}(n)$ and $S=(s_1,\dots,s_q)\in\calQ_{m,q}(n')$, and suppose $G_R\cong G_S$.  Part~(5) gives $\Phi$, $\sigma$, signs, and conjugators.  Strict minimality of the components gives $n'\ge n$ by applying $\Phi$ to a component of $R$, and $n\ge n'$ by applying $\Phi^{-1}$ to the corresponding component of $S$; hence $n=n'$.  Applying Theorem~\ref{thm:KSS-input}(3) to one matched pair gives a relabeling $\Theta$ with the same outer class as $\Phi$, because the source component has trivial outer stabilizer.  The full family of relations therefore has the form
\[
 s_i=\sh_{t_i}\bigl(\Theta(r_{\sigma(i)})^{\epsilon_i}\bigr).
\]
The converse is immediate.  
\end{proof}

\section{Isomorphism algorithms and enumeration}\label{sec:algorithms-counting}

\subsection{Isomorphism algorithms}\label{subsec:algorithms}

Fix $R=(r_1,\dots,r_q)\in\calQ_{m,q}(n_0)$.  The parameters $m,q,n_0$, the tuple $R$, and the neighborhood data defining $\calQ_{m,q}$ are constants.  For a comparison tuple $S=(s_1,\dots,s_t)$ put
\[
 N=\sum_{j=1}^t|s_j|_A.
\]

\begin{lem}[Checking the comparison promise]\label{lem:comparison-promise}
Fix $q\ge 1$.  Let $S=(s_1,\dots,s_t)$ be a finite tuple of words over $A^{\pm1}$, and let $N$ be their total input length.  In $O(N^2)$ time one can determine whether all of the following hold:
\begin{enumerate}[(1)]
\item $t=q$, and every $s_j$ is nontrivial and cyclically reduced;
\item the tuple $S$ is irredundant;
\item the symmetrization of $S$ satisfies $C'(1/6)$.
\end{enumerate}
\end{lem}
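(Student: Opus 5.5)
Check the three conditions in order, stopping (and reporting failure) as soon as one fails. Each step should cost $O(N^2)$ word operations.

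\textbf{Condition (1).} Comparing $t$ with the fixed constant $q$ takes constant time. For each $s_j$ we check nontriviality and scan adjacent letters, including the wrap-around pair of last and first letters, for cancellation. This is $O(N)$ in total.

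\textbf{Condition (3), done before (2).} Form the symmetrization $\mathcal S$ implicitly. Its elements are indexed by triples $(j,\epsilon,s)$ giving a component, an orientation and a cyclic starting position, so there are $2N$ origins. We need the longest common prefix of every pair of distinct signed cyclic words. Computing this naively for all pairs would cost $O(N^3)$, so I would instead build one generalized suffix structure. Concatenate the words $s_j s_j$ and $s_j^{-1}s_j^{-1}$ with separators, and build a suffix array with LCP information in linear time, or simply compute LCPs pairwise by dynamic programming in $O(N^2)$. The dynamic programming uses the recursion
\[
\operatorname{lcp}(x,y)=1+\operatorname{lcp}(x+1,y+1)
\]
on the doubled strings, truncated at the relevant relator lengths. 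The table has $O(N^2)$ entries, each filled in constant time.

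With these LCP values, treat each pair of distinct origins. When two origins describe the same cyclic word, distinctness as elements of $\mathcal S$ is decided by comparing full length. This is done by checking whether the truncated LCP reaches the length and the lengths agree, which makes the comparison itself $O(1)$. For a genuinely distinct pair, the common prefix restricted to the lengths of both relators is a piece. We then check
\[
6|v|<\min\{|r|,|r'|\},
\]
which covers both relators beginning with $v$. Pieces are exactly common initial segments of distinct elements of $\mathcal S$, so this is the $C'(1/6)$ condition verbatim.

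\textbf{Condition (2).} The same table decides irredundancy. Two components $s_i,s_j$ with $i\ne j$ are conjugate up to inversion precisely when they have equal length $\ell$ and some signed origins give $\operatorname{lcp}\ge\ell$. This uses the same LCP table, so it adds only $O(N^2)$.

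The main obstacle is keeping the pair enumeration within $O(N^2)$ while handling the truncation at unequal relator lengths correctly. The DP table, indexed by position pairs in the concatenated doubled string of length $O(N)$, has size $O(N^2)$ and gives each LCP in $O(1)$. Applying the truncation $\min\{\operatorname{lcp},|r|,|r'|\}$ afterwards keeps the total at $O(N^2)$.
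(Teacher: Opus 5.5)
Your proposal is correct and follows essentially the paper's route. Both use the doubled blocks $s_js_j$ and $s_j^{-1}s_j^{-1}$ with separators and a generalized suffix/LCP structure with values capped at the relator lengths. Both then identify origins that give the same member of the symmetrization and run an $O(N^2)$ pairwise check against one sixth of the shorter relator. The differences are only cosmetic: you offer an elementary $O(N^2)$ dynamic-programming LCP table as an alternative to the suffix array, and you reuse that table to test irredundancy instead of a separate doubled-word cyclic-conjugacy test.
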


\begin{proof}
A linear scan checks part~\textup{(1)}.  Part~\textup{(2)} is checked by testing each pair for cyclic conjugacy up to inversion, using a doubled word.  Since $q$ is fixed after part~\textup{(1)} has passed, the total time is $O(N^2)$.

For part~\textup{(3)}, form the doubled blocks $s_js_j$ and $s_j^{-1}s_j^{-1}$, separated by distinct fixed markers outside $A^{\pm1}$.  A generalized suffix array and longest-common-prefix data can be built in $O(N\log N)$ time.  For each cyclic relator use only starting positions in the first copy of its doubled block, and cap every longest-common-prefix value at the length of that relator, so that no comparison runs beyond one cyclic turn.  Group starts representing the same member of the symmetrized set and discard repetitions.  There are $O(N)$ remaining starts; checking every pair for a common prefix whose length is at least one sixth of the shorter relator costs $O(N^2)$ in total.  The separating markers prevent comparisons from crossing between blocks.  This condition is exactly the failure of the strict $C'(1/6)$ inequality, so its absence is exactly $C'(1/6)$.
\end{proof}

\begin{lem}[Search output from a fixed strictly minimal word]\label{lem:KSS-search-output}
Let $N\ge 1$, let $u$ be a fixed strictly minimal cyclically reduced word, and suppose that a cyclically reduced word $v$ of length at most $N$ lies in the same $\Out(F_m)$-orbit as $u$.  The search form of Whitehead's algorithm produces
\[
 \Phi=\Theta_k\cdots\Theta_1,
 \qquad k=O(N),
\]
where the $\Theta_i$ are Whitehead automorphisms, $\Phi[u]=[v]$, and every prefix image $\Theta_j\cdots\Theta_1[u]$ has cyclic length $O(N)$.  The search takes $O(N^2)$ time.
\end{lem}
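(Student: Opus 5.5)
We would run the two stages of Whitehead's algorithm on the input $v$, with $u$ fixed, and keep a record of each move. Stage one is minimization. Beginning with the cyclically reduced word $v$, we repeatedly test the finitely many Whitehead automorphisms, of which there are a number depending only on $m$. Whenever one strictly decreases the cyclic length, we apply it and cyclically reduce. Each step lowers the length by at least one, so there are at most $|v|_A\le N$ steps. Each test computes $\|\Theta(w)\|_A$ for a word of length at most $N$ in $O(N)$ time. Stage one therefore produces a sequence $\Theta'_1,\dots,\Theta'_{k_1}$ with $k_1\le N$ in $O(N^2)$ time. All intermediate cyclic lengths are at most $N$, because the lengths are decreasing. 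The output is a minimal word $v_0$ in the orbit of $u$.

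Stage two uses strict minimality. Since $u$ is strictly minimal, its length is the minimum over its orbit, so $|v_0|=|u|$, a constant. By Theorem~\ref{thm:KSS-input}(3), some relabeling $\Theta_0\in\Rel(A)$ sends $u$ to a cyclic shift of $v_0$. We find $\Theta_0$ by checking the $2^m m!$ relabelings against cyclic shifts, which takes constant time because $|u|$ is fixed. We then set
\[
 \Phi=(\Theta'_{k_1})^{-1}\cdots(\Theta'_1)^{-1}\Theta_0 .
\]
The inverse of a Whitehead automorphism is again a Whitehead automorphism: the inverse of a second-kind automorphism with multiplier $a$ is the one with multiplier $a^{-1}$, and relabelings are closed under inverses. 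Hence $\Phi$ is a product of $k=k_1+1=O(N)$ Whitehead automorphisms, and $\Phi[u]=[v]$.

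For the prefix bound, note that the prefix images are $\Theta_0[u]=[v_0]$, followed by the stage-one chain traversed backwards, ending at $[v]$. Each of these has cyclic length at most $N$. The total time is $O(N^2)$ for stage one plus $O(N)$ for composing the output.

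The main obstacle is ensuring that stage one really terminates at a word of length $|u|$ in at most $N$ steps of linear cost each. This rests on Whitehead's length-reduction theorem, which guarantees that a non-minimal word always admits a strictly length-decreasing Whitehead move. It also rests on the fact that a single Whitehead move with cyclic reduction costs $O(N)$, as in \cite{KSS06}, Theorem~A.
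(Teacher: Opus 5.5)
Your proof is correct and follows the paper's argument. You minimize $v$ by at most $N$ length-decreasing Whitehead moves of linear cost each, use strict minimality and Theorem~\ref{thm:KSS-input}(3) to join $u$ to the terminal minimum by a relabeling, and then reverse the recorded path; your remark that inverses of Whitehead automorphisms are again Whitehead automorphisms makes explicit a point the paper leaves tacit. One small slip: with the composition convention of the statement, the displayed product should read $\Phi=(\Theta'_1)^{-1}\cdots(\Theta'_{k_1})^{-1}\Theta_0=(\Theta'_{k_1}\cdots\Theta'_1)^{-1}\Theta_0$, which matches your verbal description of the stage-one chain traversed backwards.
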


\begin{proof}
By Theorem~\ref{thm:KSS-input}(4), Whitehead's algorithm takes $O(N^2)$ time and records its moves.  Minimize $v$ by length-decreasing Whitehead moves.  There are at most $N$ such moves and all intermediate cyclic lengths are at most $N$.  Since $u$ is strictly minimal and equivalent to $v$, the terminal minimum is obtained from $u$ by a relabeling and a cyclic shift, by Theorem~\ref{thm:KSS-input}(3).  Start with this fixed-length move and reverse the recorded minimization path.  This process gives the stated Whitehead word and prefix-length bound.
\end{proof}

\begin{lem}[Applying a recorded search to a fixed tuple]\label{lem:apply-search-output}
Let $N\ge 1$ and let $u\in F_m\setminus\{1\}$ be fixed.  Suppose that, for some $c>0$,
\begin{equation}\label{eq:fixed-current-lower}
 \left\langle T,\frac{\eta_u}{\|u\|_A}\right\rangle
 \ge c\Lambda_A(T)
 \qquad(T\in\cvbar_m\setminus\{0\}).
\end{equation}
Let $R=(r_1,\dots,r_q)$ be a fixed tuple of nontrivial elements of $F_m$.  Suppose
\[
 \Phi=\Theta_k\cdots\Theta_1,
 \qquad k=O(N),
\]
where the $\Theta_i$ are Whitehead generators.  Put
\[
 \Phi_j=\Theta_j\cdots\Theta_1\in\Aut(F_m),
 \qquad
 \phi_j=[\Phi_j]\in\Out(F_m),
\]
and suppose that every prefix image $\Phi_j[u]$ has cyclic length $O(N)$.  Then cyclically reduced representatives of all $\Phi[r_i]$ can be computed in $O(N^2)$ time.
\end{lem}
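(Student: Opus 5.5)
The plan is to apply the Whitehead generators one at a time to the current cyclically reduced representatives of $\Phi_j[r_i]$, and to show that every intermediate representative has length $O(N)$. Since $k=O(N)$ and each step costs time linear in the current length, the total cost is then $O(N)\cdot O(N)=O(N^2)$.

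\textbf{Step 1: bounding the Lipschitz factor.} Each $\Phi_j[u]$ has cyclic length $O(N)$. By \eqref{eq:intersection-equivariance} and \eqref{eq:fixed-current-lower} applied with $T=T_A\phi_j$,
\[
 \|\Phi_j(u)\|_A=\|u\|_A\Bigl\langle T_A\phi_j,\frac{\eta_u}{\|u\|_A}\Bigr\rangle\ge c\|u\|_A\Lambda_A(\phi_j).
\]
Since $u$ is fixed, this gives $\Lambda_A(\phi_j)=O(N)$ uniformly in $j$.

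\textbf{Step 2: bounding the relator images.} For each fixed $r_i$, the translation length satisfies
\[
 \|\Phi_j(r_i)\|_A=\|r_i\|_{T_A\phi_j}\le\Lambda_A(\phi_j)\|r_i\|_A.
\]
This holds because an equivariant $\Lambda$-Lipschitz map $T_A\to T_A\phi_j$ stretches translation lengths by at most $\Lambda$. Since $r_i$ is fixed, we get $\|\Phi_j(r_i)\|_A=O(N)$ for all $j\le k$ and all $i$.

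\textbf{Step 3: the algorithm.} Maintain cyclically reduced words $w_{i,j}$ representing $\Phi_j[r_i]$. To pass from $j$ to $j+1$, substitute $\Theta_{j+1}$ letter by letter into $w_{i,j}$. A Whitehead automorphism sends each letter to a word of length at most $3$, so the substituted word has length at most $3|w_{i,j}|$. Then freely and cyclically reduce it with a stack in linear time. This yields $w_{i,j+1}$, a cyclically reduced representative of $\Theta_{j+1}\Phi_j[r_i]=\Phi_{j+1}[r_i]$, which is valid because automorphisms respect conjugacy classes. By Step 2, each step takes $O(N)$ time. Since $q$ is fixed and there are $k=O(N)$ steps, the total time is $O(N^2)$.

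\textbf{Main obstacle.} The key point is Step 1. Without the filling-type lower bound \eqref{eq:fixed-current-lower}, intermediate images of the $r_i$ could blow up even though the images of $u$ stay short. That bound is exactly what converts shortness of $\Phi_j[u]$ into a linear bound on $\Lambda_A(\phi_j)$. Everything else is routine linear-time word manipulation.
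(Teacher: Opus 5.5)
Your proposal is correct and follows the paper's proof essentially step for step. You take $T=T_A\phi_j$ in the filling-type lower bound to get $\Lambda_A(\phi_j)=O(N)$, then use an equivariant Lipschitz map to bound $\|\Phi_j(r_i)\|_A\le\Lambda_A(\phi_j)\|r_i\|_A=O(N)$, and then apply the $O(N)$ Whitehead generators successively, with linear-time substitution and free/cyclic reduction at each stage.
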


\begin{proof}
Taking $T=T_A\phi_j$ in \eqref{eq:fixed-current-lower} gives
\[
 \|\Phi_j(u)\|_A
 \ge c\|u\|_A\Lambda_A(\phi_j),
\]
so $\Lambda_A(\phi_j)=O(N)$.  An equivariant $\Lambda_A(\phi_j)$-Lipschitz map $T_A\to T_A\phi_j$ bounds translation lengths, and hence
\[
 \|\Phi_j(r_i)\|_A
 \le\Lambda_A(\phi_j)\|r_i\|_A
 =O(N)
\]
for every fixed component.  Apply the Whitehead generators successively to cyclically reduced representatives and cyclically reduce after each step.  Since $m$ is fixed, each Whitehead generator replaces every basis letter by a word of uniformly bounded length, and substitution followed by free and cyclic reduction is linear in the current word length.  Thus each stage costs $O(N)$; there are $O(N)$ stages, and $q$ is fixed.
\end{proof}

\phantomsection\label{proof:mainthm-algorithms}
\begin{mainthmrestated}{mainthm:algorithms}{Isomorphism algorithms}
Fix $m\ge 2$ and $q\ge 1$.  For a finite tuple $U=(u_1,\dots,u_s)$ of elements of $F_m$, write
\[
 G_U=\langle a_1,\dots,a_m\mid u_1,\dots,u_s\rangle.
\]
\begin{enumerate}[(1)]
\item Fix $R=(r_1,\dots,r_q)\in\calQ_{m,q}(n_0)$.  There is a quadratic-time algorithm for the following restricted isomorphism problem.  The input is a finite tuple $S=(s_1,\dots,s_t)$ of nontrivial cyclically reduced words, known to be irredundant and to have symmetrization satisfying $C'(1/6)$; the algorithm decides whether $G_R\cong G_S$.  With
\[
 N=\sum_{j=1}^t|s_j|_A,
\]
the running time of the algorithm is $O(N^2)$.  The assumptions on $S$ can be checked within the same $O(N^2)$ time bound; an input failing it is reported as outside the restricted problem.
\item There is a linear-time algorithm which, given tuples $R\in\CR_n^q$ and $S\in\CR_{n'}^q$ known to belong to $\calQ_{m,q}(n)$ and $\calQ_{m,q}(n')$, respectively, decides whether $G_R\cong G_S$.  The algorithm's running time is linear in $n''=\max\{n,n'\}$; the algorithm does not test the assumptions on $R,S$.
\end{enumerate}
\end{mainthmrestated}

\begin{proof}
For part~(1), return a negative answer if $t\ne q$.  If $t=q$, use Lemma~\ref{lem:comparison-promise} to verify nontriviality, cyclic reduction, irredundancy, and the $C'(1/6)$ condition; if this verification fails, report that the input lies outside the promised comparison class.  The first relator $r_1$ is strictly minimal and has trivial outer stabilizer by (Q4).  For every $j$ and $\epsilon\in\{\pm1\}$, run the search form of Whitehead's algorithm on
\[
 [r_1],\ [s_j^\epsilon].
\]
The total cost of these fixed $2q$ searches is $O(N^2)$ by Theorem~\ref{thm:KSS-input}(4).

For each successful search, Lemma~\ref{lem:KSS-search-output} gives a candidate $\Phi$ with a recorded Whitehead word.  Condition (Q2) and \eqref{eq:generic-current-lower} give \eqref{eq:fixed-current-lower} for $u=r_1$.  Lemma~\ref{lem:apply-search-output} computes all $\Phi[r_i]$ in $O(N^2)$ time.  Compare their conjugacy classes, up to independent inversion and permutation, with those represented by the $s_j$.  Because $q$ is fixed, this costs at most $O(N^2)$.

A passing candidate induces an isomorphism.  Conversely, if $G_R\cong G_S$, Theorem~\ref{mainthm:rigidity}(5) supplies an automorphism carrying $[r_1]$ to some $[s_j^\epsilon]$.  The corresponding search succeeds.  Any two candidates with this effect differ by an element of $\Stab_{\Out(F_m)}([r_1])$, which is trivial; hence the candidate found by the algorithm carries the entire relator family correctly.  This proves part~(1).

For part~(2), note that, since $q$ is fixed, $\sum_{r\in R}|r|+\sum_{s\in S}|s|=q(n+n')\le 2qn''=O(n'')$. To prove part~(2), we first compare the common relator lengths $n,n'$.  If $n\ne n'$ the $G_R\not\cong G_S$ by Theorem~\ref{mainthm:rigidity}(6). If $n=n'$ (so that $n''=n=n'$), we test the fixed $2^m m!$ relabelings, the fixed $q!$ component permutations, the $2^q$ inversion choices, and the required cyclic-conjugacy relations to check whether $S$ can be obtained from $R$ as in Theorem~\ref{mainthm:rigidity}(6) and thus determine whether or not $G_R\cong G_S$.  Theorem~\ref{mainthm:rigidity}(6) proves correctness of this procedure.  Since $m,q$ are fixed and cyclic conjugacy is linear-time decidable, the total time is linear in the input length $\sum_{r\in R}|r|+\sum_{s\in S}|s|$ and thus linear in $n''$.  
\end{proof}

\subsection{Enumeration}\label{subsec:counting}

The class $\calQ_{m,q}(n)$ is invariant under $\Gamma_{m,q,n}$.  Indeed, (Q1) uses the invariant core; the neighborhood $\mathcal U$ is relabeling-invariant and counting currents are unchanged by cyclic shift and inversion; signed overlap is invariant under all operations in \eqref{eq:Gamma-action}; and the class $\mathcal Z_m$ is invariant under relabeling, cyclic shift, and inversion, while tuple stabilizers conjugate.  By (Q4), the action on $\calQ_{m,q}(n)$ is free.

Put
\[
 Q_n=\calQ_{m,q}(n),
 \qquad
 B_n=\CR_n^q\setminus Q_n.
\]

\phantomsection\label{proof:mainthm-counting}
\begin{mainthmrestated}{mainthm:counting}{Asymptotic number of isomorphism types}
Fix $m\ge 2$ and $q\ge 1$.  Let $I_{m,q}^{\mathrm{cr}}(n)$ be the number of group isomorphism types represented by presentations
\[
 \langle A\mid r_1,\dots,r_q\rangle,
 \qquad (r_1,\dots,r_q)\in\CR_n^q.
\]
Then
\[
 I_{m,q}^{\mathrm{cr}}(n)
 \sim
 \frac{(2m-1)^{qn}}{2^{m+q}m!\,q!\,n^q}
 \qquad(n\to\infty).
\]
\end{mainthmrestated}

\begin{proof}
Proposition~\ref{prop:Qcr-generic} gives
\begin{equation}\label{eq:cr-good-bad}
 |Q_n|=|\CR_n|^q\bigl(1+O(e^{-cn})\bigr),
 \qquad
 |B_n|=O(|\CR_n|^qe^{-cn}).
\end{equation}
By Lemma~\ref{lem:Gamma-order}, every $\Gamma_{m,q,n}$-orbit in $Q_n$ has size
\[
 2^{m+q}m!\,q!\,n^q.
\]
Theorem~\ref{mainthm:rigidity}(6) says that two tuples in $Q_n$ define isomorphic groups exactly when they lie in the same orbit.  Hence the number of isomorphism types represented in $Q_n$ is
\[
 \frac{|Q_n|}{2^{m+q}m!\,q!\,n^q}.
\]
Every isomorphism type not represented in $Q_n$ has a representative in $B_n$, so
\[
 \frac{|Q_n|}{2^{m+q}m!\,q!\,n^q}
 \le I_{m,q}^{\mathrm{cr}}(n)
 \le
 \frac{|Q_n|}{2^{m+q}m!\,q!\,n^q}+|B_n|.
\]
Since exponential decay dominates $n^{-q}$, the bad-set contribution is $o\left(\frac{|\CR_n|^q}{n^q}\right)$.

Finally, \eqref{eq:number-cyclically-reduced} gives $|\CR_n|\sim(2m-1)^n$, proving the formula.
\end{proof}

\section{Disclosure of AI use}

\noindent Preparation of this paper substantially relied on chats with ChatGPT, but the author verified all the proofs given, reviewed and edited the content as needed and takes full responsibility for the content of the paper.

\end{document}